\newtheorem{theorem}{Theorem}[section]
\newtheorem{lemma}{Lemma}[section]
\newtheorem{corollary}{Corollary}[section]
\newtheorem{definition}{Definition}[section]
\numberwithin{equation}{section}
\def\Z{\mathbb Z}
\def\R{\mathbb R}
\def\d{\partial}
\def\a{\alpha}
\def\b{\beta}
\def\g{\gamma}
\def\e{\epsilon}
\def\D{\Delta}
\title{Flows in Flatland: A Romance of Few Dimensions }
\author{Gabriel Katz}
\address{5 Bridle Path Circle, Framingham, MA 01701, USA}
\email{gabkatz@gmail.com}
\begin{document}
\maketitle

\begin{abstract} In this paper, we present our general results about traversing flows on manifolds with boundary in the context of the flows on surfaces with boundary. We take advantage of the relative simplicity of $2D$-worlds to explain and popularize our approach to the Morse theory on smooth manifolds with  boundary, in which the boundary effects take the central stage.   
\end{abstract} 

\section{Introduction}

This paper is about the gradient flows on compact surfaces, thus the reference to Abbott's \emph{Flatland} \cite{Ab} in the title. The paper is an informal introduction into the philosophy and some key results from \cite{K} -\cite{K6}, as they manifest themselves in $2D$. 

The remarkable convergence of topological, geometrical, and analytical approaches to the study of closed surfaces is widely recognized by the practitioners for more than a century.  We will exhibit a similar convergence of different investigative approaches to vector flows on surfaces \emph{with boundary}. 

We will take advantage of the relative simplicity of $2D$  flows to illustrate and popularize the main ideas of our recent research of \emph{traversally generic} flows on manifolds with boundary. 
When the results are specific to the dimension two, their validation will  be presented in detail.  The multidimensional arguments that resist significant simplifications in $2D$ will be described and explained in general terms.  

Throughout the investigation, we focus on the interactions of gradient flows with the boundary, rather than on the critical points of Morse functions. So, in our approach to the Morse Theory, the boundary effects rule.

\section{On Morse Theory on surfaces with boundary and  beyond}

\emph{Morse Theory},  the classical book of John W. Milnor \cite{Mi}, starts with the canonical picture of a Morse function $f: T^2 \to \R$ on a 2-dimensional torus $T^2$ (see Fig. 1). It is portrayed as the height function $f$ on the torus $T^2$ residing in the space $\R^3$.  The height $f$ has four \emph{critical points}: $a, b, c$, and $d$ so that $$f(a) > f(b) > f(c) > f(d).$$ A point $z$  is called critical if the differential $df$ of $f$ vanishes at $z$. In the vicinity of each critical point $z$, $T^2$ admits a pair of local coordinate functions, say $x$ and $y$, so that locally the function $f$ acquires the form $$f(x, y) = f(0, 0) \pm x^2 \pm y^2,$$ where the signs may form four possible combinations.  

\begin{figure}[ht]\label{fig1.1}
\centerline{\includegraphics[height=2in,width=2.8in]{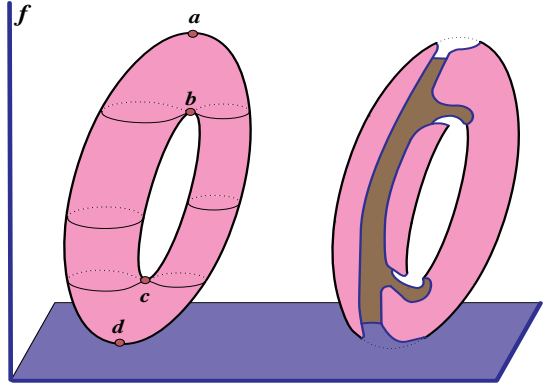}}
\bigskip
\caption{\small{A Morse function $f$ on a 2-dimensional torus $T^2$ and its non-singular restriction to the complement $X$ of a disk $D^2 \subset T^2$. Note the curved geometry of the boundary loop $\d X$ which ``remembers" the nature of $f$-critical points $a, b, c, d$.}}
\end{figure}

We call a  vector field $v$, tangent to $T^2$, \emph{gradient-like} if $df(v) > 0$ everywhere outside of the set $Cr(f)$ of critical points. 

If  the torus is ``slightly slanted" with respect to the vertical coordinate $f$ in $\R^3$, then the following picture emerges. The majority of downward trajectories of the $f$-gradient flow $\{\Phi_t\}_{t \in \R}$ that emanate from $a$, asymptotically reach $d$.  There are two trajectories that asymptotically link $a$ with $b$, and two trajectories that link $a$ with $c$. No (unbroken) trajectory asymptotically  connects $b$ to $c$. 

Perhaps, a more transparent  depiction of the gradient flow $\{\Phi_t\}_{t \in \R}$ is given in Fig. 2, where the torus is shown in terms of its fundamental domain, the square. To form $T^2$, the opposite sides of the square are identified in pairs. \smallskip

The Morse Theory is concerned with the sets of \emph{constant level} $\{f^{-1}(\a)\}_{\a \in \R}$ and the \emph{below constant level} sets $\{f^{-1}((-\infty, \a))\}_{\a \in \R}$. The main observation is that the topology of these sets is changing in an essential way only when the rising $\a$ crosses the critical values $$Cr(f) = \{f(a), f(b), f(c), f(d)\}.$$ Each such ``critical crossing" results in an \emph{elementary surgery} on the set $\{f^{-1}((-\infty, \a))\}_{\a \in \R}$, where $\a$ is just below a critical value $\a_\star \in Cr(f)$. For a small $\e > 0$, an elementary surgery $$f^{-1}((-\infty, \a_\star -\e)) \Rightarrow f^{-1}((-\infty, \a_\star + \e))$$ attaches the \emph{handle} $ f^{-1}((\a_\star -\e, \a_\star + \e))$ to the set $f^{-1}((-\infty, \a - \e))$. Eventually, when $\a$ rises above $f(a)$, the entire topology of torus $T^2$ is captured  by a sequence of these elementary surgeries. \smallskip

From a different angle,  the knowledge of how the critical points $a, b, c, d$ interact via the trajectories of the $\Phi_t$-flow is also sufficient for reconstructing the surface $T^2$ as Fig. 2 suggests (see \cite{C}).\smallskip

\begin{figure}[ht]\label{fig1.2}
\centerline{\includegraphics[height=1.8in,width=3in]{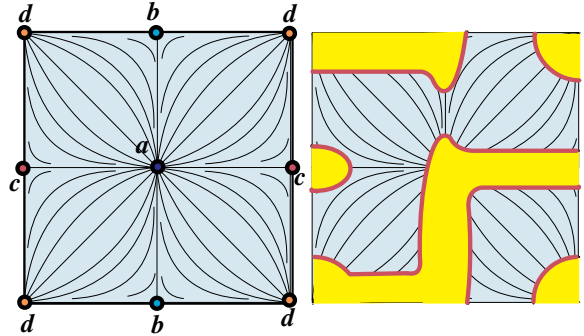}}
\bigskip
\caption{\small{The gradient flow of the Morse function $f: T^2 \to \R$ from Fig. 1 and its restriction to the complement of a disk in the torus.}}
\end{figure}

Note that, in the vicinity of each critical point, the gradient flow exhibits \emph{discontinuity}: small changes in the initial position of a point $z$, residing in the vicinity of a critical point, result in significant differences in the position of $\Phi_t(z)$ for big positive/small negative values of $t$ (see Fig. 3). In fact, this discontinuity of the gradient flow, expressed in terms of the \emph{stable} and \emph{unstable manifolds} of critical points (see \cite{Mi}),  captures the topology of the surface (as the left diagram in Fig. 2 suggests)! 

\begin{figure}[ht]\label{fig1.3}
\centerline{\includegraphics[height=1.3in,width=2.2in]{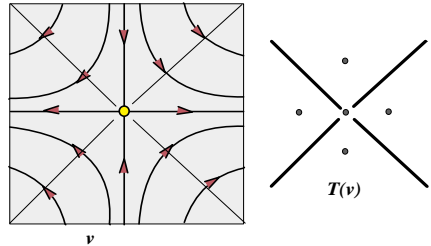}}
\bigskip
\caption{\small{A gradient flow $v$ in the vicinity of a singular point and a very schematic picture of its (nonseparable) trajectory space $\mathcal T(v)$.}}
\end{figure}

As a result of gradient flow discontinuity, the \emph{space of trajectories} $\mathcal T(v)$ is pathological (non-separable). The space $\mathcal T(v)$ is constructed by declaring equivalent any two points that reside on the same trajectory. 
\smallskip

When a compact connected surface $X$ has a nonempty boundary $\d X$, traditionally, the Morse function $f: X \to \R$ is assumed to be constant on $\d X$ and its gradient flow interacts with the boundary in constrained way. Then the \emph{relative topology} of the pair $(X, \d X)$ can be captured in the ways analogous to the previous description of the Morse Theory on torus. In fact, the Morse Theory on manifolds with boundary can be viewed as a very special instance of the Morse Theory on stratified spaces (the two strata  $\d X$ and $X$ form the stratification). The latter was developed by Goresky and MacPherson in [GM] -[GM2].
\smallskip

In this paper, we propose a different philosophy for the Morse Theory on compact surfaces/manifolds  $X$ \emph{with boundary}.  To formulate it, let us revisit our favorite closed surface, the torus. By deleting from $T^2$ small disks, centered on the points of the critical set $Cr(f)$, we manufacture a surface $X$ whose boundary is a disjoint union of four circles. Evidently, $f: X \to \R$ has no critical points at all. Still it has a nontrivial topology! Can this topology be reconstructed from some data, provided by the critical point-free $f$ and its gradient-like field $v \neq 0$? An experienced reader would notice that the restriction $f|: \d X \to \R$ has critical points (maxima and minima), some of which interact  \emph{along} the boundary (with the help of a gradient-like field $v^\d$, tangent to $\d X$). However, it is quite clear that these interactions are not sufficient for a  reconstruction of  the topology of $X$! In fact, a reconstruction of the surface $X$ becomes possible if one introduces additional interactions between the points of $Cr(f|_{\d X})$ that occur ``through the bulk $X$" and are defined with the help of \emph{both} vector fields $v$ and $v^\d$. This observation has been explored by a number of authors, but it is not the world view that we are promoting here... 

To  dramatize further the  situation we are facing, let us place four small disks, centered on the critical points of $f: T^2 \to \R$,  into a single open disk $D^2$ and form $X = T^2 \setminus D^2$ (see Fig 2, the right diagram). Again, $f|: X \to \R$ has no critical points, the gradient field $v|_X \neq 0$, but its topology of $X$ is nontrivial. This time, the boundary $\d X$  of the punctured torus $X$ is just a single circle!  Let us keep this challenge in mind. \smallskip 

 Can one propose a ``Morse Theory" that is not centered on critical points? The answer is affirmative. It relies on the following observation. Typically, in the vicinity of $\d X$, the $v$-trajectories are interacting with the boundary in a number of very particular and stable ways: they are either transversal to $\d X$, or are tangent to it in a \emph{concave} or \emph{convex} fashion\footnote{It is possible to have a field $v$ for which some trajectories will be cubically tangent to the boundary, but the majority of vector fields $v$ avoid such cubic tangencies.} (see Fig. 5). So the boundary $X$ may be ``wiggly" with respect to the flow. We claim that this geometry of the $v$-flow in connection to the boundary $\d X$ is the crucial ingredient  for reconstructions of $X$ in terms of the flow (see Section 8, especially Theorem \ref{th7.1}). 

In the vicinity of a \emph{concave} tangency point,  the $v$-flow is \emph{discontinuous} in the same sense as the gradient flow is discontinuous in the vicinity of its critical point: in time, close initial points become distant.  In this context, the divergence of initially close points occurs due to very different \emph{travel times} available to them; unlike the infinite travel time for the gradient flows of the Morse theory on \emph{closed} surfaces, in the case of the non-singular gradient flows on surfaces with boundary, every point exits the surface in finite time. In particular, the surface is \emph{not} flow-invariant. And again, these discontinuities of the flow reflect the topology of the surface.  Let us clarify this point.

Fig. 4 
shows  a gradient flow $v$ on a surface $X \subset \R^2$, the disk with $4$ holes. The nonsingular function $f: X \to \R$ is the vertical coordinate in $\R^2$.  Each $v$-trajectory is either a closed segment, or a singleton. By collapsing each trajectory to a point, we create a quotient space $\mathcal T(v)$ of trajectories. Since the flow trajectories are closed segments or singletons, this time, the trajectory space $\mathcal T(v)$ is ``decent",  a finite graph with verticies  of valency $1$ or $3$ only. The verticies of valency $3$ correspond to the  points on $\d X$ where the boundary  is concave with respect to the flow, and the univalent verticies to the points on $\d X$ where the flow is convex. 

The obvious map $\Gamma: X \to \mathcal T(v)$ cellular. Moreover, because the fibers of $\Gamma$ are contractable, $\Gamma$ is a \emph{homotopy equivalence}.  In particular, the fundamental groups $\pi_1(X)$ and $\pi_1(\mathcal T(v))$ are isomorphic with the help of $\Gamma$. So the trajectory spaces of generic non-vanishing vector fields $v$ of the gradient type on connected surfaces $X$ with boundary deliver $1$-dimensional \emph{homotopy theoretical models} of  $X$. 

\begin{figure}[ht]\label{fig1.4}
\centerline{\includegraphics[height=2.3in,width=3in]{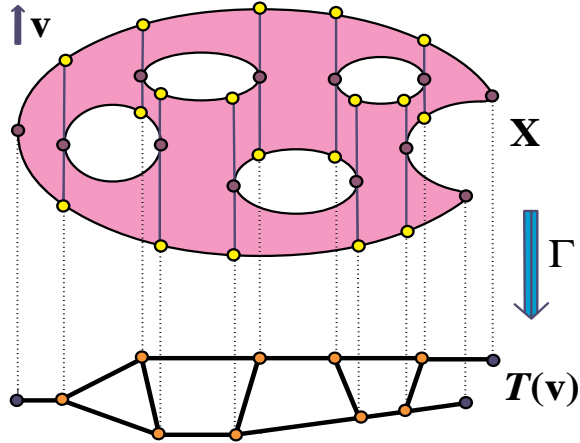}}
\bigskip
\caption{\small{The map $\Gamma: X \to \mathcal T(v)$ for a traversally generic (vertical) field $v$ on a disk with $4$ holes. The trajectory space $\mathcal T(v)$ is a graph whose verticies are of valecies $1$ and $3$.}}
\end{figure}

\section{Vector felds and Morse stratifications on surfaces}

Following \cite{Mo}, for any vector field $v$ on a compact surface $X$ with boundary such that $v|_{\d X} \neq 0$, we consider the closed locus $\d_1^+X(v)$, where the field is pointing inside $X$ and the closed locus $\d_1^-X(v)$, where it points outside. The intersection $$\d_2X(v) =_{\mathsf{def}} \d_1^+X(v)\cap \d_1^+X(v)$$ is the locus where $v$ is \emph{tangent} to the boundary $\d X$. Points $z \in \d_2X(v)$ come in two flavors: by definition, $z \in  \d^+_2X(v)$ when $v(z)$ points inside of the locus $\d_1^+X(v)$; otherwise  $z \in  \d^-_2X(v)$. To achieve some uniformity of notations, put $\d_0^+X =_{\mathsf{def}} X$ and $\d_1X =_{\mathsf{def}} \d X$. 

\begin{figure}[ht]\label{fig1.5}
\centerline{\includegraphics[height=1.2in,width=4in]{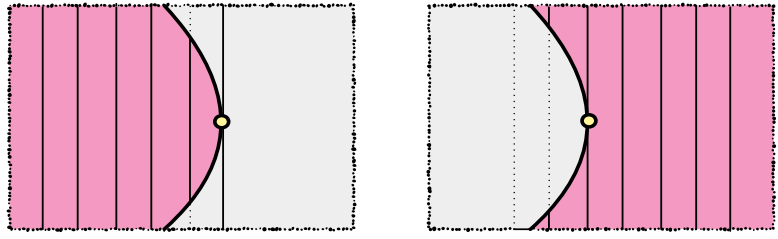}}
\bigskip
\caption{\small{A boundary generic field $v$ in the vicinity of a point from $\d_2^-X(v)$ (on the left) and in the vicinity of a point from $\d_2^+X(v)$ (on the right).}}
\end{figure}

\begin{definition}
We say that a vector field $v$ on a compact surface $X$ is \emph{boundary generic} if:
\begin{itemize}
\item $v|_{\d X}$, viewed as a section of the normal $1$-dimensional (quotient) bundle $$n_1 =_{\mathsf{def}} T(X)|_{\d X} \big/ T(\d X),$$ is transversal to its zero section, 
\item $v|_{\d_2 X(v)}$, viewed as a section of the normal $1$-dimensional bundle  $n_2 =_{\mathsf{def}} T(\d X)|_{\d X}$, is transversal to its zero section.  
\hfill $\diamondsuit$
\end{itemize} 
\end{definition}

\noindent In particular, for a boundary generic $v$, the loci $\d_1^\pm X(v)$ are finite unions of closed intervals and circles, residing in $\d X$; and the loci $\d_2^\pm X(v)$ are finite unions of points, residing in $\d X$ (see Fig. 4). \smallskip

We denote by $\mathcal V^\dagger(X)$ the space (in the $C^\infty$-topology) of all boundary generic fields on a compact surface $X$. \smallskip

Let $\chi(Z)$ denote the Euler number of a space $Z$. Recall that $\chi(Z)$ is the alternating sum of dimensions of the homology spaces $\{H_i(Z; \R)\}_i$. 

Since for a connected surface $X$ with boundary $H_2(X; \R) = 0$, we get
$$\chi(X) = 1 - \dim_\R(H_1(X; \R)).$$ For a closed connected surface, $$\chi(X) = 2 - \dim_\R(H_1(X; \R)).$$ 

Given a vector field $v$ with isolated zeros, we can associate an integer $\mathsf{ind}_x(v)$ with each zero $x$ of $v$. This integer is the \emph{degree} of the map which, crudely speaking, takes each point $z$ on a small circle $C_x$  with its center at $x$ to the unit vector $v(z)/\|v(z)\|$. Then we define $\mathsf{Ind}(v)$, the (global) index of $v$, as the sum $\sum_{\{x \in \text{ zeros of } v\}} \mathsf{ind}_x(v)$.\smallskip

The Morse formula \cite{Mo}, in the center of our investigation, computes the index $\mathsf{Ind}(v)$ of a given boundary generic vector field $v$ on a surface $X$ as the alternating sum of the Euler numbers of the Morse strata $\{\d_j^+X(v)\}_{0 \leq j \leq 2}$:
\begin{eqnarray}\label{eq1.1}
\mathsf{Ind}(v) = \chi(X) - \chi(\d_1^+X(v)) +  \chi(\d_2^+X(v)).
\end{eqnarray}
In the case of a connected surface $X$ with boundary, $\chi(X) = 1 - \dim_\R(H_1(X; \R))$, and this formula reduces to
$$\mathsf{Ind}(v) = 1 - \dim_\R(H_1(X; \R)) - \#\{ \mathsf{arcs}\; in \; \d_1^+X(v)\} +  \#\{\d_2^+X(v)\}$$
$$= 1 - \dim_\R(H_1(X; \R)) + \frac{1}{2}\big(\#\{\d_2^+X(v)\}- \#\{\d_2^-X(v)\}\big).$$ 
In particular,  if $v \neq 0$, then $\mathsf{Ind}(v) = 0$, and we get
\begin{eqnarray}\label{eq1.2}
 \frac{1}{2}\big(\#\{\d_2^+X(v)\}- \#\{\d_2^-X(v)\}\big) =  \dim_\R(H_1(X; \R)) - 1,
\end{eqnarray}
where the RHS of the equation is the topological invariant $|\chi(X)| = -\chi(X)$ of $X$. In contrast, the cardinality $\#\{\d_2^+X(v)\}$ depends on $v$.
 
 \begin{lemma}\label{lem3.1} Let a surface $X$ be formed by removing $k$ open disks from a closed surface $Y$, the sphere with $g$ handles. Then, for any boundary generic  field $v \neq 0$  on $X$,
 $$\#\{\d_2^+X(v)\}\, \geq \,  4g - 4 + 2k.$$
Moreover, $\#\{\d_2^+X(v)\} =  4g - 4 + 2k$ only when $\#\{\d_2^-X(v)\} = 0$. 
\end{lemma}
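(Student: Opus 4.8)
The plan is to extract everything from the index formula \eqref{eq1.2}. Since $X$ is the sphere with $g$ handles minus $k$ open disks, a standard computation gives $\dim_\R H_1(X;\R) = 2g + k - 1$, so $-\chi(X) = 2g + k - 2$, and \eqref{eq1.2} becomes
\begin{eqnarray}\label{eq-plan}
\#\{\d_2^+X(v)\} - \#\{\d_2^-X(v)\} = 2\big(2g + k - 2\big) = 4g - 4 + 2k.
\end{eqnarray}
Thus $\#\{\d_2^+X(v)\} = 4g - 4 + 2k + \#\{\d_2^-X(v)\} \geq 4g - 4 + 2k$, with equality precisely when $\#\{\d_2^-X(v)\} = 0$. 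So the entire content of the lemma is the nonnegativity of $\#\{\d_2^-X(v)\}$, which is automatic because it is the cardinality of a finite set, together with the Euler-characteristic bookkeeping.

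First I would carefully justify the homology computation: removing $k$ disjoint open disks from the closed orientable surface $Y_g$ deformation retracts the result onto a wedge of $2g + k - 1$ circles (equivalently, use the long exact sequence of the pair, or just $\chi(X) = \chi(Y_g) - k = (2 - 2g) - k$ together with $H_2(X;\R)=0$ and the formula $\chi(X) = 1 - \dim_\R H_1(X;\R)$ quoted in the excerpt). Either route gives $\dim_\R H_1(X;\R) - 1 = 2g + k - 2$. Then I would substitute this into the already-established identity \eqref{eq1.2}, which holds for \emph{any} boundary generic $v \neq 0$ on $X$, to obtain \eqref{eq-plan}. Strictly, \eqref{eq1.2} was stated for connected $X$; since here $X$ is assumed connected (it is $Y_g$ minus disks), this is fine — though I should note that \eqref{eq1.2} as written already presupposes $v$ is boundary generic, which is part of the hypothesis.

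The honest statement is that there is no real obstacle: the "hard" input — the Morse formula \eqref{eq1.1} and its non-singular specialization \eqref{eq1.2} — is cited from \cite{Mo} and stated earlier, so the lemma is a one-line consequence once the Betti number of a punctured surface is written down. The only thing requiring a sentence of care is the equality case: from \eqref{eq-plan}, $\#\{\d_2^+X(v)\} = 4g-4+2k$ forces $\#\{\d_2^-X(v)\} = 0$, and conversely, so the "only when" (and in fact "if and only if") is immediate. I would also remark that for this to be nonvacuous one needs $4g - 4 + 2k \geq 0$, i.e. the right-hand side to be a legitimate count; when $g = 0$ this requires $k \geq 2$, and indeed a non-vanishing $v$ on a disk ($g=0,k=1$) can have $\d_2^+ = \d_2^-$ empty, consistent with $4g-4+2k = -2 < 0$ being a vacuous lower bound — a caveat worth flagging but not an obstruction to the proof as stated for $X$ with $\chi(X) \leq 0$.
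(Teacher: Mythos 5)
Your proposal is correct and follows essentially the same route as the paper: compute $\chi(X) = 2 - 2g - k$, substitute into the non-singular Morse identity (\ref{eq1.2}) to get $\#\{\d_2^+X(v)\} - \#\{\d_2^-X(v)\} = 4g - 4 + 2k$, and conclude via the nonnegativity of $\#\{\d_2^-X(v)\}$, with equality exactly in the boundary concave case. Your added caveat about the bound being vacuous when $4g-4+2k<0$ is a sensible remark but not needed for the argument.
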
 

\begin{proof} The Euler number is additive under gluing surfaces along their boundary components. Therefore, if $k$ disks are removed from $Y$, the sphere with $g$ handles, then  $\chi(X) = 2-2g - k$. Thus the Morse formulas  (\ref{eq1.1}) and (\ref{eq1.2}) imply  $$\#\{\d_2^+X(v)\} \geq 4g - 4 + 2k$$ for any   $v \neq 0$.  Moreover, $\#\{\d_2^+X(v)\} =  4g - 4 + 2k$ if and only if $\#\{\d_2^-X(v)\} = 0$, the main feature of the \emph{boundary concave} fields (see Definition \ref{def1.2}).
\end{proof}
In particular, for any non-vanishing boundary generic field $v$ on a torus with a single hole, $\#\{\d_2^+X(v)\} \geq 2$ (cf. Fig. 2).
\smallskip

Recall that an \emph{immersion} is a smooth map of manifolds, whose differential has the trivial kernel. 

Consider a smooth map $\a: X \to \R^2$, which is an immersion in the vicinity of $\d X$. Any such $\a$ gives rise to the Gauss map $G: \d X \to S^1$, defined by the formula $G(x) = \a_\ast(\tau_x)/\| \a_\ast(\tau_x)\|$, where $\tau_x$ is the tangent vector to $\d X$ at $x$.  The direction of $\tau_x$ is consistent with the preferred orientation of $\d X$, induced by the preferred orientation of $X$ .  

Let $\hat v \neq 0$ be a constant field on $\R^2$. Since the kernel of the differential of $D\a: TX \to T\R^2$ is trivial along $\d X$, the field $\hat v$ defines a vector field $\tilde v = \a^\ast(\hat v)$ on $X$ in the vicinity of $\d X$. The pull-back field $\tilde v$ extends to a vector field $v$ on $X$, possibly with zeros (see [G] for engaging discussions of vector field transfers and the Gauss-Bonnet Theorem).\smallskip

Then the degree of the Gauss map is given by a classical Hopf formula (\cite{H}) $$\deg(G) = \chi(X) - \mathsf{Ind}(v).$$

When  $\a: X \to \R^2$ is an immersion everywhere, the pull-back field $v = \a^\ast(\hat v) \neq 0$ everywhere. Thus $\mathsf{Ind}(v) = 0$, and,  for a  connected $X$ with $\d X \neq \emptyset$, we get
$$\deg(G) = \chi(X) =_{\mathsf{def}} 1 - \dim(H_1(X; \R)).$$ So, for an immersions $\a$, we get a new interpretation of formula (\ref{eq1.2}):
\begin{eqnarray}\label{eq1.3}
\deg(G) = \chi(X) = \frac{1}{2} \Big(\#\{\d_2^-X(v)\} - \#\{\d_2^+X(v)\}\Big).
\end{eqnarray}
This global-to-local formula has another classical geometrical interpretation. Let $\mathsf g = \a^\ast(\mathsf g_E)$ be the Riemannian metric on $X$, the pull-back of the Euclidean metric on $\R^2$. 
Let  $K_\nu$ denote the normal curvature of $\d X$ with respect to $\mathsf g$. Then $$\deg(G) = \frac{1}{2\pi}\int_{\d X} K_\nu\, d\mathsf g,$$ which leads to another pleasing global-to-local connection:
\[
\frac{1}{\pi}\int_{\d X} K_\nu\, d\mathsf g = \#\{\d_2^-X(v)\} - \#\{\d_2^+X(v)\}.
\]

In particular, for  a connected orientable surface $X$ of genus $g$ with a \emph{single} boundary component,  
\begin{eqnarray}\label{eq1.4}
\chi(X) = 1 - 2g  = \frac{1}{2}\big(\#\{\d_2^-X(v)\} - \#\{\d_2^+X(v)\}\big).
\end{eqnarray}

So the number of $v$-trajectories $\g$ in $X$ that are tangent to $\d X$, but are not singletons (they correspond  to points of $\d_2^+X(v)$), as a function of genus $g$,  grows at least as fast as $4g - 2$!
\smallskip

On the other hand, when $\d X$ is connected, by the Whitney index formula \cite{W}, the degree of the Gauss map $G: \d X \to S^1$ can be also calculated as $\mu + N^+ - N^-$, where $N^\pm$ denotes the number of positive/negative self-intersections of the curve $\a(\d X) \subset \R^2$, and $\mu = \pm 1$. Here is a brief description of the rule by which the self-intersections acquire polarities. Let $p \in \a(\d X)$ be a point where the coordinate function $y: \R^2 \to \R$ attends its minimum on the curve $\a(\d X)$. If the tangent vector $\tau_p$ at $p$, which defines the orientation of $\a(\d X)$, is $\d_x$, then we put $\mu = +1$; if $\tau_p = - \d_x$, then $\mu = -1$. Starting at $p$ and moving in the direction of $\tau_p$, we visit each self-intersection $a$ twice and in a particular order. The first visitation defines a tangent vector $\tau_1(a)$, the second visitation defines a tangent vector $\tau_2(a)$. When the ordered pair $(\tau_1(a), \tau_2(a))$ defines the clockwise orientation of the $xy$-plane, then we attach ``$-$" to $a$. Otherwise, the polarity of $a$ is ``$+$".
\smallskip

Therefore we get a somewhat mysterious connection between the self-intersections of $\d X$ under immersions $\a: X \to \R^2$ and the tangency patterns of the flows in $X$ that are the $\a$-pull-backs of  non-vanishing flows in the plane. 

\begin{theorem}\label{th1.2} Let $\hat v \neq 0$ be a vector field in the plane $\R^2$.  Let $X$ be a connected orientable surface with a connected boundary. Consider   an immersion $\a: X \to \R^2$ such that the loop $\a(\d X)$ has transversal self-intersections only. Assume that the pull-back $v = \a^\ast(\hat v)$ is a boundary generic field on $X$.   
Then 
$$\frac{1}{2} \Big(\#\{\d_2^+X(v)\} - \#\{\d_2^-X(v)\}\Big) = N^+ - N^- \pm 1 = 2g -1,$$
$$\frac{1}{2} \Big(\#\{\d_2^+X(v)\} - \#\{\d_2^-X(v)\}\Big) + 2\, \leq N^+ + N^-,$$ 
the latter inequality being sharp by an appropriate choice of $\a$.

\end{theorem}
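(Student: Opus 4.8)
\emph{Plan of proof.} The chain of equalities is obtained by assembling three facts already recorded above: the Morse index formula (\ref{eq1.1})--(\ref{eq1.2}), the Hopf formula (\ref{eq1.3}), and Whitney's index formula \cite{W}. Since $\a$ is an immersion and $\hat v\neq 0$, the pull-back $v=\a^\ast(\hat v)$ is nowhere zero, so $\mathsf{Ind}(v)=0$; feeding this into (\ref{eq1.1})--(\ref{eq1.2}) and using $\dim_\R H_1(X;\R)=2g$ for a connected orientable surface with a single boundary circle gives $\tfrac12\big(\#\{\d_2^+X(v)\}-\#\{\d_2^-X(v)\}\big)=2g-1$; this is just (\ref{eq1.4}) rewritten, and it is the right-hand equality. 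For the middle term, the hypothesis that $\a(\d X)$ has only transversal double points lets us apply Whitney's formula to the immersed loop $\a(\d X)$, whose crossings are exactly the $N^+$ positive and $N^-$ negative ones: it gives $\deg(G)=\mu+(N^+-N^-)$ with $\mu=\pm1$. Combining this with $\deg(G)=\chi(X)=1-2g$ from (\ref{eq1.3}) shows that $N^+-N^-$ differs from $\chi(X)$ by exactly $\pm1$; this is the middle equality, and -- the overall sign being fixed by the orientation conventions used for $G$ and for the polarities $N^{\pm}$ -- all three quantities coincide with $-\chi(X)=2g-1$.

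For the inequality, note first that $N^+-N^-=\deg(G)-\mu$ is a difference of two odd integers, hence even, so $N^++N^-$ is even as well; using the first part, the assertion $\tfrac12\big(\#\{\d_2^+X(v)\}-\#\{\d_2^-X(v)\}\big)+2\le N^++N^-$ is the statement $N^++N^-\ge 2g+1$, equivalently $N^++N^-\ge 2g+2$. The cheap estimate $N^++N^-\ge|N^+-N^-|=|1-2g-\mu|\ge 2g-2$ falls short; the extra input is that $\a(\d X)$ is not merely a loop of turning number $1-2g$ but the \emph{connected} boundary of the immersed surface $X$ of genus exactly $g$. To exploit this, let $R_0$ (unbounded, winding number $0$) and $R_1,\dots,R_{N+1}$, where $N=N^++N^-$, be the complementary regions of $\g=\a(\d X)$: viewing $\g$ as a connected $4$-valent graph with $N$ vertices and $2N$ edges, Euler's formula on $S^2$ forces exactly $N+2$ faces, all disks. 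Normalizing $\a$ to be orientation-preserving (compose with a reflection of $\R^2$ if needed), the winding number $d_i\geq 0$ over $R_i$ equals the number of $\a$-preimages there, and $X$ is assembled from $d_i$ disk-shaped sheets over $R_i$, glued to one another along the $2N$ boundary arcs lying over the edges of $\g$ and along the interior ``shadow'' arcs, i.e.\ the components of $\a^{-1}(\g)\setminus\d X$. Counting the resulting $0$-, $1$-, and $2$-cells of $X$ produces an identity of the shape
$$\chi(X)=\sum_i d_i-\sum_{\text{edges }e}\min(d_e,d'_e)+\sum_{\text{vertices }w}\ell_w,$$
where $d_e,d'_e$ are the winding numbers of the two regions adjacent to $e$ and $\ell_w$ is the least of the four winding numbers surrounding $w$. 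Substituting $\chi(X)=1-2g$ and exploiting that the $d_i$ are nonnegative integers with $d_0=0$ then forces $N\ge 2g+2$. (Morally: if $N$ were smaller, the forced winding-number data would leave room only for an immersed surface with disconnected boundary, or with genus $<g$, contradicting the hypotheses -- this is the quantitative refinement of the elementary remark that a simple loop can only bound an immersed disk.)

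For sharpness one must exhibit, for each $g\ge1$, an immersion $\a$ of a genus-$g$ surface with a single boundary component whose boundary loop has exactly $2g+2$ transversal double points and for which $\a^\ast(\hat v)$ is boundary generic. For $g=1$ one can take the round disk with two ``outside'' handle-bands attached along interleaved boundary arcs so that the bands cross transversally; the boundary loop then crosses itself $2g+2=4$ times. For general $g$ one iterates, attaching each further handle so that its band meets the existing picture in exactly two new double points; applying Whitney's formula to the resulting loop confirms $N^++N^-=2g+2$, with $N^+-N^-$ the even value dictated by the first part, so the bound is attained. (The degenerate case $g=0$, in which $\a$ may be an embedding and $N^++N^-=0$, falls outside the scope of the inequality.)

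The first string of equalities is bookkeeping once (\ref{eq1.1})--(\ref{eq1.4}) and Whitney's formula are in hand; the real work -- and the step I expect to be the main obstacle -- is the cell count behind the displayed formula for $\chi(X)$ and the extraction of the sharp constant $2g+2$ from it. One must keep careful track of the shadow curves and of the local sheet-combinatorics around each double point of $\g$, and rule out the winding-number configurations that would permit a small $N$; packaging the extremal construction and verifying its double-point count by Whitney's formula is a further, more routine, piece of work.
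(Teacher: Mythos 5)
The first chain of equalities is handled correctly and by essentially the same route as the paper: $\mathsf{Ind}(v)=0$ for the pull-back of a non-vanishing field under an immersion, the Morse/Hopf identities (\ref{eq1.2})--(\ref{eq1.4}) give $\tfrac12(\#\{\d_2^+X(v)\}-\#\{\d_2^-X(v)\})=2g-1$, and Whitney's formula supplies the middle term. Your parity observation (that $N^++N^-$ is even, so the stated bound $N^++N^-\ge 2g+1$ is really $N^++N^-\ge 2g+2$) is a sensible clarification of what ``sharp'' must mean here.

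The gap is in the inequality. The paper obtains $N^++N^-\ge 2g+2$, together with its realizability, by quoting a theorem of Guth (reference [Gu]): the minimal number of self-intersections of the boundary of an immersed genus-$g$ surface with connected boundary in the plane is $2g+2$. You instead attempt to reprove this from scratch, and the attempt is not a proof. The displayed identity
$$\chi(X)=\sum_i d_i-\sum_{e}\min(d_e,d'_e)+\sum_{w}\ell_w$$
is only asserted ``of the shape''; it is not derived, and it is doubtful as written, because the way the sheets of $X$ lying over adjacent complementary regions are glued along an edge of $\a(\d X)$ is not determined by the winding numbers $d_e,d'_e$ alone --- one needs the full assembly/pairing data of the immersion (in the spirit of Blank's word for immersed disks), and different assemblies with the same winding numbers yield different surfaces. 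The decisive step, ``substituting $\chi(X)=1-2g$ \dots forces $N\ge 2g+2$,'' is precisely the content of Guth's theorem and is left at the level of a ``morally'' remark. The sharpness construction (handle-bands attached to a disk with two new crossings per handle) is plausible but its crossing count and the boundary-genericity of $\a^\ast(\hat v)$ are not verified. As it stands, the second assertion of the theorem is unproved; either cite [Gu] as the paper does, or the cell-count argument must be carried out in full, including a correct Euler-characteristic formula for immersed surfaces over a curve arrangement and the extremal analysis of the winding-number/assembly data.
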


\begin{proof} The first formula is the result of combining the Whitney formula for $\deg(G)$ with formulas (\ref{eq1.3}),  (\ref{eq1.4}).

By a theorem of Guth \cite{Gu}, for any immersion $\a: X \to \R^2$, the total number of self-intersections of the loop $\a(\d X)$ admits an estimate $$N^+ + N^- \geq \, 2g + 2.$$ Moreover, this lower bound is realized by an immersion $\a: X \to \R^2$! Therefore, by  formula  (\ref{eq1.4}), the Guth inequality is transformed into 
$$N^+ + N^- \geq  
2+ \frac{1}{2}\big(\#\{\d_2^+X(v)\} - \#\{\d_2^-X(v)\}\big).$$
Moreover, for some optimal immersion $\a$, 
$$N^+ + N^- =  2+ \frac{1}{2}\big(\#\{\d_2^+X(v)\} - \#\{\d_2^-X(v)\}\big)  = 2 - \frac{1}{2\pi}\int_{\d X} K_\nu\, d\mathsf g.$$

\end{proof}
\smallskip

When a surface $X$ is oriented and a field $v$ is boundary generic, then the points from $\d_2^+X(v)$ come in two \emph{new flavors}: ``$\oplus, \ominus$". 
By definition, a point $a \in \d_2^+X(v)$ has the polarity ``$\oplus$" if the orientation of $T_aX$ determined by the pair  $(\nu_a, v(a))$, where $\nu_a$ is the inner normal to $\d X$, agrees with the preferred orientation of $X$. Otherwise, the polarity of $a$ is defined to be ``$\ominus$". 

Thus, for each choice of orientation of $X$ (and hence of $\d X$) we get a partition $$\d_2^+X(v) = \d_2^{+, \oplus}X(v) \coprod \d_2^{+, \ominus}X(v).$$
Switching the orientation of $X$ switches the second polarities in the partition.

\section{Convexity,  concavity, and complexity of flows in 2D}

\begin{definition}\label{def1.2} We say that a boundary generic vector field $v$ is \emph{boundary convex} if $\d_2^+X(v) = \emptyset$. We say that a boundary generic $v$ is \emph{boundary concave} if $\d_2^-X(v) = \emptyset$ (see Fig 5). \hfill $\diamondsuit$
\end{definition}

The existence of a boundary convex field puts severe restrictions on the topology of the surface.

\begin{lemma}\label{lem1.2} If a compact connected surface $X$ with boundary $\d X \neq \emptyset$ admits a boundary convex gradient-like vector field $v \neq 0$, then $X$ is either a disk $D^2$, or an annulus $A^2$. 
\end{lemma}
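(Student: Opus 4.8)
The plan is to exploit the combinatorial rigidity of the trajectory space $\mathcal T(v)$ together with the Morse index count from Lemma~\ref{lem3.1}. First I would observe that a boundary convex field is in particular boundary concave-free only in the reverse sense: since $\d_2^+X(v) = \emptyset$, there are no concave tangency trajectories, so every $v$-trajectory is either a singleton or a closed segment meeting $\d X$ transversally at both endpoints, with the only tangencies being the convex ones indexed by $\d_2^-X(v)$. Consequently the quotient map $\Gamma\colon X \to \mathcal T(v)$ has contractible fibers (arcs and points), hence is a homotopy equivalence, and $\mathcal T(v)$ is a finite graph. The crucial point is that with $\d_2^+X(v) = \emptyset$ there are \emph{no} trivalent vertices in $\mathcal T(v)$: every vertex has valency $1$ (coming from a convex tangency point of $\d_2^-X(v)$). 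A connected finite graph all of whose vertices are univalent is either a single point, a single closed interval, or a single circle; I would need to rule out the circle or handle it, and these three cases correspond via the homotopy equivalence to $X$ being contractible or homotopy equivalent to $S^1$.

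Next I would convert the homotopy information into a classification of surfaces. If $\mathcal T(v)$ is a point or an interval, then $X$ is simply connected, and a compact connected simply connected surface with nonempty boundary is the disk $D^2$. If $\mathcal T(v)$ is a circle, then $\pi_1(X) \cong \Z$ and $H_1(X;\R) \cong \R$; by the classification of compact surfaces with boundary, a connected orientable such surface with $\dim_\R H_1(X;\R) = 1$ is the annulus $A^2$ (the Möbius band in the non-orientable case — one should either assume orientability or note that a gradient-like field still forces the orientable model here, which I would double-check). Alternatively, and perhaps more cleanly, I would run the Morse count directly: with $v \neq 0$ we have $\mathsf{Ind}(v) = 0$, so formula~(\ref{eq1.2}) gives
$$\tfrac12\big(\#\{\d_2^+X(v)\} - \#\{\d_2^-X(v)\}\big) = \dim_\R H_1(X;\R) - 1.$$
Since $\d_2^+X(v) = \emptyset$, the left side is $-\tfrac12 \#\{\d_2^-X(v)\} \le 0$, forcing $\dim_\R H_1(X;\R) \le 1$. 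Thus $\chi(X) \in \{0,1\}$, and a compact connected surface with boundary has $\chi(X) = 1$ exactly for the disk and $\chi(X) = 0$ exactly for the annulus (among orientable ones; the Klein-bottle-minus-disk and Möbius band have $\chi = 0$ too, so orientability or an extra argument is needed to exclude the Möbius band). Combining the two cases yields $X \cong D^2$ or $X \cong A^2$.

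The main obstacle I anticipate is the non-orientable case: $\chi(X) = 0$ alone does not distinguish the annulus from the Möbius band, and $\pi_1 \cong \Z$ holds for both. To exclude the Möbius band I would argue that a boundary convex gradient-like field on the Möbius band is impossible — for instance by passing to the orientable double cover $\tilde X$ (an annulus), where the lifted field $\tilde v$ is still gradient-like and boundary generic with $\d_2^+\tilde X(\tilde v) = \emptyset$, and then deriving a contradiction from the deck-transformation symmetry forcing an even count of convex points on each of the two boundary circles of the annulus while the single boundary circle of the Möbius band downstairs would have to carry an odd total — or, more robustly, by a direct local analysis of how convex tangencies chain along a boundary circle, showing that along any boundary component the convex tangency points of a gradient-like field alternate with the transversal arcs in a pattern that closes up only in the annular way. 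A secondary, more routine obstacle is verifying carefully that the fibers of $\Gamma$ really are contractible for a boundary convex field (no closed trajectories, no trajectories escaping to form non-closed segments), which follows from the traversing property of gradient-like fields — every point leaves $X$ in finite time — together with boundary genericity, so I would cite the relevant structure from the earlier discussion rather than re-prove it.
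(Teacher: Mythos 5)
Your route is genuinely different from the paper's. The paper argues in one stroke: boundary convexity means the $(-v)$-flow retracts $X$ onto $\d_1^+X(v)$, which is therefore a connected compact $1$-manifold, i.e.\ a circle or a segment, and the flow structure then exhibits $X$ as $S^1\times[0,1]$ or $D^2$. You instead combine the trajectory-graph picture (no trivalent vertices since $\d_2^+X(v)=\emptyset$, so $\mathcal T(v)$ is a point, a segment, or a circle) with the index count (\ref{eq1.2}), which forces $\dim_\R H_1(X;\R)\le 1$, and then invoke the classification of compact surfaces with boundary. Both halves of your argument are sound as far as they go, and the Euler-characteristic computation is correct (one small slip: the Klein bottle minus a disk has $\chi=-1$, not $0$, so the only surfaces you must separate at $\chi=0$ are the annulus and the M\"obius band). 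The paper's retraction argument buys the diffeomorphism type directly and disposes of the M\"obius band implicitly, since a deformation retraction of $M^\circ$ onto its boundary circle would make $\pi_1(\d M^\circ)\to\pi_1(M^\circ)$ an isomorphism, whereas it is multiplication by $2$.

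The genuine gap is exactly the point you flag: excluding the M\"obius band. The deck-transformation parity argument you sketch does not work as stated — nothing forces the number of convex tangencies on $\d M^\circ$ to be odd, and in fact formula (\ref{eq1.2}) forces it to be \emph{zero}: with $\d_2^+X(v)=\emptyset$ and $\dim_\R H_1(M^\circ;\R)=1$, the identity $-\tfrac12\#\{\d_2^-X(v)\}=\dim_\R H_1-1=0$ gives $\d_2 X(v)=\emptyset$. But that observation closes the gap for you immediately: $v$ is then everywhere transverse to the single boundary circle of $M^\circ$, so that circle is entirely $\d_1^+X(v)$ or entirely $\d_1^-X(v)$; since $v\ne 0$ is gradient-like, hence traversing (Lemma \ref{5.1}), every trajectory must both enter through $\d_1^+X(v)$ and exit through $\d_1^-X(v)$, so neither locus can be empty — a contradiction. (Equivalently, when $\mathcal T(v)$ is a circle with no vertices, $X\to\mathcal T(v)$ is an interval bundle oriented fiberwise by $v$, hence trivial, which is the argument the paper itself uses later in Theorem \ref{th6.2} to show $gc^+(M^\circ)\ne 0$.) With that substitution your proof is complete.
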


\begin{proof} The convexity of the field $v$ implies that $X$ admits a $(-v)$-directed continuous retraction on the locus $\d_1^+X(v)$. Since $X$ is connected, it follows that $\d_1^+X(v)$ is connected as well. Thus, $\d_1^+X(v)$ is either a circle, or a segment. In the first case, $X$ is diffeomorphic to an annulus $S^1 \times [0, 1]$; in the second case,  $X$ is diffeomorphic to a disk $D^2$.
\end{proof}

The same phenomenon occurs  in any dimension: if a compact connected smooth $(n+1)$-manifold $X$ with a \emph{connected} boundary  admits a \emph{boundary convex} gradient-like vector field $v \neq 0$, then $H_n(X; \Z) = 0$ (\cite{K1}). In other words, $H_n(X; \Z) \neq 0$ is a topological obstruction to the existence of a boundary convex non-vanishing gradient field on $X$.
\smallskip

In contrast, the boundary concave non-vanishing gradient fields  are plentiful.  For example, consider a radial vector field $v$ on an annulus $A^2$. Delete from $A^2$ any number of convex disks and restrict $v$ to the resulting $2$-disk with holes. The convexity of the disks that we have removed implies that any disk with holes admits  a boundary concave gradient-like vector $v \neq 0$. 

Many other surfaces admit such concave fields as well. For example, consider a Morse function $f: Y \to \R$ on a closed surface $Y$ and its gradient field $v$. Then removing small convex (in the local Morse coordinates) balls, centered on the critical points, from $Y$, produces a \emph{boundary concave} non-vanishing gradient field on $X$. In particular if $Y$ is a sphere with $g$ handles, then one can find a Morse function with $2g + 2$ critical points (see Fig. 1). So the surface $X$, obtained from $Y$ by removing $2g + 2$ balls, admits a concave gradient-like field $v \neq 0$.

 In fact, by Theorem \ref{th6.2}, any connected orientable surface with boundary, but the disk, admits a boundary concave non-vanishing gradient field! \smallskip

We view the integer $c^+(v) =_{\mathsf{def}} \#(\d_2^+X(v))$ as a \emph{measure of complexity} of the $v$-flow, subject to the condition $\mathsf{Ind}(v) = 0$ or, alternatively, subject to the condition $v \neq 0$ .

We define the \emph{complexity} of a compact connected surface $X$ with boundary as the minimum 
$$c^+(X) = \min_{v \neq 0} \{c^+_2(v)\},$$
where $v$ runs over all non-vanishing  boundary generic fields on $X$.  

By varying $v$ within different spaces of fields, one may consider a variety of such minima;  non-vanishing fields and non-vanishing gradient-like fields are the two most important cases. So we introduce  the \emph{gradient complexity} $$gc^+(X) =_{\mathsf{def}} \min_{v \neq 0\; \text{of the gradient type}} \{c^+_2(v)\},$$
where $v$ runs over all non-vanishing gradient-like fields on $X$. \smallskip

Evidently $gc^+(X) \geq c^+(X)$. Let $M^\circ$ denote the M\"{o}bius band.  In Section 6, we will show that $gc^+(M^\circ) = 1$, while $c^+(M^\circ) = 0$, so the two notions of complexity are different. 
 \smallskip

In terms of this complexity, we can restate the Lemma 3.1 as follows.

\begin{corollary}\label{cor 4.1} Let $X$ be a connected compact surface with boundary. Let $v$ be a boundary generic vector field on $X$, subject to the condition $\mathsf{Ind}(v) = 0$.

Then the complexity of $v$ satisfies the inequality  $$c^+(v) \geq 2\cdot \dim_\R H_1(X; \R) - 2 = -2\cdot \chi(X).$$ When $\chi(X) \leq 0$, this inequality turns into the equality $c^+(v) = -2\cdot \chi(X)$ if and only if $v$ is boundary concave.  

As a result, for any natural $N$, there are finitely many connected compact surfaces  of bounded complexity $c^+(X) \leq N$. In fact, the number of such surfaces (counted up to a homeomorphism) grows as a quadratic function in $N$. \hfill $\diamondsuit$
\end{corollary}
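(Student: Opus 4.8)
The plan is to deduce everything from Lemma~\ref{lem3.1} (equivalently, the Morse formula~(\ref{eq1.1}) with $\mathsf{Ind}(v)=0$) together with the basic classification of compact surfaces with boundary. The first inequality is immediate: for a connected $X$ with $\d X\neq\emptyset$ one has $\chi(X)=1-\dim_\R H_1(X;\R)$, and formula~(\ref{eq1.2}) gives $\tfrac12(c^+(v)-\#\{\d_2^-X(v)\})=-\chi(X)=\dim_\R H_1(X;\R)-1$; since $\#\{\d_2^-X(v)\}\geq 0$, we get $c^+(v)\geq -2\chi(X)=2\dim_\R H_1(X;\R)-2$. Equality holds exactly when $\#\{\d_2^-X(v)\}=0$, i.e. when $v$ is boundary concave by Definition~\ref{def1.2}; this uses $\chi(X)\le 0$ only to ensure the right-hand side is nonnegative so that a concave field is not a priori excluded. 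So the first two sentences of the corollary are just a restatement of Lemma~\ref{lem3.1} in the complexity language, and I would present them as such, being a little careful that the formula $\#\{\d_2^+X(v)\}=\#\{\mathrm{arcs}\ \mathrm{in}\ \d_1^+X(v)\}+\cdots$ recorded in the excerpt is what lets us drop the ``$\#\{\mathrm{arcs}\}$'' term.

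For the finiteness statement, I would first reduce to orientable surfaces only if convenient, but it is cleaner to handle all compact surfaces at once. A compact connected surface with nonempty boundary is determined up to homeomorphism by its orientability, its number $b\ge 1$ of boundary circles, and its genus $g\ge 0$ (where in the non-orientable case $g$ counts crosscaps); one has $\chi(X)=2-2g-b$ in the orientable case and $\chi(X)=2-g-b$ in the non-orientable case. Since $c^+(X)\geq c^+(v)\geq -2\chi(X)$ for any admissible $v$ — and in particular $c^+(X)=-2\chi(X)$ is attainable precisely when $X$ admits a boundary concave non-vanishing gradient field, which by the forward-referenced Theorem~\ref{th6.2} is every such surface except the disk — we have $c^+(X)=-2\chi(X)=2\dim_\R H_1(X;\R)-2$ for every connected compact surface $X$ with boundary other than $D^2$ (and $c^+(D^2)=0=-2\chi(D^2)$ as well, by Lemma~\ref{lem1.2} or directly). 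Hence $c^+(X)\le N$ is equivalent to $-2\chi(X)\le N$, i.e. $\chi(X)\ge -N/2$, i.e. $2g+b\le N/2+2$ (orientable) or $g+b\le N/2+2$ (non-orientable), together with $b\ge 1$ and $g\ge 0$. Each of these is a bounded region of the integer lattice, so only finitely many homeomorphism types occur.

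The only genuine content beyond bookkeeping is the growth rate, and I would obtain it by counting lattice points. Writing $M=\lfloor N/2\rfloor+2$, the orientable types with $\d X\neq\emptyset$ and $c^+\le N$ are the pairs $(g,b)$ with $g\ge 0$, $b\ge 1$, $2g+b\le M$; the number of such pairs is $\sum_{g\ge 0,\,2g+1\le M}(M-2g)=\sum_{g=0}^{\lfloor (M-1)/2\rfloor}(M-2g)$, which is $\Theta(M^2)=\Theta(N^2)$. The non-orientable types are the pairs $(g,b)$ with $g\ge 1$, $b\ge 1$, $g+b\le M$, of which there are $\binom{M-1}{2}=\Theta(M^2)=\Theta(N^2)$; adding the single type $D^2$ changes nothing asymptotically. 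Summing the two contributions gives a total that is a quadratic function of $N$ up to the usual floor corrections, which is exactly the claimed ``grows as a quadratic function in $N$.''

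I expect the main (very mild) obstacle to be bookkeeping hygiene rather than mathematics: one must make sure the equality case of Lemma~\ref{lem3.1} is invoked with the hypothesis $\chi(X)\le 0$ in force (it fails for $D^2$ and $A^2$, where $\chi\ge 0$, but those two surfaces are handled directly and contribute $O(1)$ types), and one must decide once and for all whether ``surfaces'' in the statement includes non-orientable ones — the quadratic growth holds either way, only the leading constant changes. If one restricts to orientable surfaces, the forward reference to Theorem~\ref{th6.2} already guarantees $c^+(X)=-2\chi(X)$ for all of them but the disk, so the argument is self-contained modulo that theorem; if non-orientable surfaces are included, the same theorem (or the explicit concave-field constructions sketched after Lemma~\ref{lem1.2}, extended to the non-orientable case) supplies the needed equality.
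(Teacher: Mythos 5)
Your proposal is correct and follows the paper's own route: the paper offers no separate proof, presenting the corollary as a direct restatement of Lemma \ref{lem3.1} (via formula (\ref{eq1.2})) combined with the classification of compact surfaces with boundary, which is exactly what you do. The only quibble is a harmless miscount in the non-orientable lattice-point tally ($\binom{M}{2}$ rather than $\binom{M-1}{2}$), which does not affect the quadratic growth conclusion.
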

\noindent {\bf Example 4.1.} For any non-vanishing boundary concave field $v$ on the torus with a single hole, $\#\{\d_2^+X(v)\} = 2$. In fact, the constant field $v$, being restricted to the complement to a convex disk in $T^2$, is boundary concave and has the property $\#\{\d_2^+X(v)\} = 2$. Thus, by Corollary 4.1,  $c^+(X) = 2$.  \hfill $\diamondsuit$
 \smallskip

Lemma \ref{lem3.1}  leads immediately to

\begin{corollary}\label{cor4.2} Let $X$ be a sphere with $g$ handles and $k$ holes, where $g, k \geq 1$. If $X$ admits a non-vanishing boundary concave field $v$, then $\#\{\d_2^+X(v)\} =  4g - 4 + 2k$. \hfill $\diamondsuit$
\end{corollary}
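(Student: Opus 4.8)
The plan is to derive Corollary \ref{cor4.2} directly from Lemma \ref{lem3.1}, which is the only real input needed. First I would record the relevant piece of Lemma \ref{lem3.1}: for $X$ the sphere with $g$ handles and $k$ holes, and any boundary generic $v \neq 0$ on $X$, one has $\#\{\d_2^+X(v)\} \geq 4g - 4 + 2k$, with equality precisely when $\#\{\d_2^-X(v)\} = 0$. Since the hypotheses of the corollary assume $g, k \geq 1$, the quantity $4g-4+2k$ is automatically $\geq 2 > 0$, so there is no degenerate case to worry about (in particular $\chi(X) = 2-2g-k < 0$, so the equality regime of Corollary \ref{cor 4.1} is the relevant one).

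Next I would invoke the definition of boundary concave from Definition \ref{def1.2}: a boundary generic field $v$ is boundary concave exactly when $\d_2^-X(v) = \emptyset$, i.e. $\#\{\d_2^-X(v)\} = 0$. So if $X$ admits such a $v$, the equality clause of Lemma \ref{lem3.1} applies to that $v$, yielding $\#\{\d_2^+X(v)\} = 4g-4+2k$ on the nose. That is the entire content of the statement; the proof is essentially one line once Lemma \ref{lem3.1} and Definition \ref{def1.2} are in hand.

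The only thing worth spelling out is the implicit consistency check: Lemma \ref{lem3.1} is stated for $v \neq 0$ boundary generic, and a boundary concave field is by Definition \ref{def1.2} boundary generic and here assumed non-vanishing, so the lemma does apply. One should also note that the Morse-index identity \eqref{eq1.2} behind Lemma \ref{lem3.1} forces the specified value for \emph{every} boundary concave non-vanishing field on $X$ simultaneously, not just for some special one — which is why the conclusion can be phrased as an unconditional equality rather than an infimum. There is no genuine obstacle here; the corollary is a formal consequence, and the ``hard part'' — the Morse formula \eqref{eq1.1} and the additivity of the Euler characteristic under boundary gluing — has already been carried out in the proof of Lemma \ref{lem3.1}. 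I would therefore present the proof as: combine the equality case of Lemma \ref{lem3.1} with the fact that boundary concavity of $v$ means exactly $\#\{\d_2^-X(v)\} = 0$.
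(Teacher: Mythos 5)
Your proposal is correct and is essentially the paper's own argument: the paper gives no separate proof of Corollary \ref{cor4.2}, simply stating that Lemma \ref{lem3.1} ``leads immediately to'' it, and your derivation (boundary concave means $\d_2^-X(v)=\emptyset$ by Definition \ref{def1.2}, so the equality case of the identity (\ref{eq1.2}) underlying Lemma \ref{lem3.1} forces $\#\{\d_2^+X(v)\}=4g-4+2k$) is exactly that one-line deduction. You are also right to lean on (\ref{eq1.2}) rather than only on the lemma's literal wording, since the lemma as stated gives only the ``only when'' direction while its proof establishes the full ``if and only if'' needed here.
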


Given a compact surface $X$ with boundary, we form its \emph{double} $DX =_{\mathsf{def}} X \cup_{\d X} X$ by attaching two copies of $X$ along their boundaries. Note that $\chi(DX) = 2\cdot \chi(X)$. Therefore,  $\chi(X) < 0$ if and only if $\chi(DX) < 0$. \smallskip

Recall that any closed orientable surface with a negative Euler number admits a metric of constant negative curvature $-1$. So if $\chi(X) < 0$, then $DX$ admits such hyperbolic metric. \smallskip

Let $vol(DX)$ denote the hyperbolic volume of $DX$, and let $vol(\D^2)$ denote the volume of an ideal hyperbolic triangle $\D^2$ in the hyperbolic plane $\mathbf H^2$. 

In $2D$, a remarkable convergence of topology and  geometry takes place. In the spirit of this convergence, since $\chi(DX) = -vol(DX)/ vol(\D^2)$, Corollary 4.1 admits a more geometric reformulation:

\begin{theorem}\label{th1.3} Let $v \neq 0$ be a boundary generic vector field on a compact  connected and orientable surface $X$ with boundary. Assume that $\chi(X) < 0$\footnote{This excludes disk and annulus.}.
Then the complexity of the $v$-flow satisfies the inequality: $$c^+(v)\; \geq \;  vol(DX)/ vol(\D^2).$$ Moreover, $c^+(v) = vol(DX)/ vol(\D^2)$ if and only if $v$ is boundary concave. 
\hfill $\diamondsuit$
\end{theorem}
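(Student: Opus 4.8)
The plan is to derive Theorem \ref{th1.3} essentially as a translation of Corollary \ref{cor 4.1} into the language of hyperbolic geometry, so almost all of the mathematical content is already in hand. First I would recall the two ingredients that are available: Corollary \ref{cor 4.1}, which asserts that for a boundary generic $v$ with $\mathsf{Ind}(v) = 0$ on a connected compact surface $X$ with boundary one has $c^+(v) \geq -2\chi(X)$, with equality (when $\chi(X) \leq 0$) precisely when $v$ is boundary concave; and the Gauss--Bonnet identity for the double $DX = X \cup_{\d X} X$, namely that a closed orientable surface with $\chi(DX) < 0$ carries a hyperbolic metric of curvature $-1$ whose total area, by Gauss--Bonnet, equals $-2\pi\,\chi(DX)$, while an ideal hyperbolic triangle has area $vol(\D^2) = \pi$. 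Combining these gives $\chi(DX) = -vol(DX)/vol(\D^2)$, the displayed relation quoted just before the theorem.

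Next I would assemble the chain of equalities. Since $v \neq 0$ on $X$, the global index vanishes: $\mathsf{Ind}(v) = 0$ (this is the $v \neq 0$ case noted after the Morse formula (\ref{eq1.1})), so Corollary \ref{cor 4.1} applies verbatim and yields $c^+(v) \geq -2\chi(X)$, with equality iff $v$ is boundary concave. The hypothesis $\chi(X) < 0$ puts us squarely in the regime where the ``if and only if'' clause of Corollary \ref{cor 4.1} is valid. Now use $\chi(DX) = 2\chi(X)$, hence $-2\chi(X) = -\chi(DX) = vol(DX)/vol(\D^2)$. Substituting into the inequality gives $c^+(v) \geq vol(DX)/vol(\D^2)$, and the equality case transports directly: $c^+(v) = vol(DX)/vol(\D^2)$ iff $-2\chi(X) = c^+(v)$ iff $v$ is boundary concave. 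The orientability hypothesis on $X$ guarantees $DX$ is a closed orientable surface, so that the constant-curvature-$(-1)$ uniformization (and the Gauss--Bonnet computation of its area) is legitimate; the condition $\chi(X) < 0$ guarantees $\chi(DX) < 0$, which is exactly what is needed for the hyperbolic structure to exist.

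The only genuine point requiring a word of care — and the step I would flag as the ``main obstacle,'' though it is more a bookkeeping matter than a real difficulty — is making sure the normalization constants line up: that $vol(DX)/vol(\D^2)$ really equals $-\chi(DX)$ and not some multiple of it. This is the classical fact that any finite-area hyperbolic surface decomposes into ideal triangles, with the number of triangles in an ideal triangulation equal to $-2\chi$ (for a closed surface realized as a union of ideal triangles one gets $2\chi$ vertices' worth of defect bookkeeping), equivalently that $\mathrm{Area} = -2\pi\chi$ by Gauss--Bonnet while $\mathrm{Area}(\D^2) = \pi$. I would simply cite Gauss--Bonnet for the closed hyperbolic surface $DX$: $\int_{DX} (-1)\, d\mathsf{g} = 2\pi\chi(DX)$, so $vol(DX) = -2\pi\chi(DX)$, and divide by $vol(\D^2) = \pi$ to get $vol(DX)/vol(\D^2) = -2\chi(DX)$... wait, this must be reconciled with $c^+(v) \geq -2\chi(X) = -\chi(DX)$. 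So in fact the correct reading is that $vol(DX)/vol(\D^2) = -\chi(DX) = -2\chi(X)$ forces $vol(\D^2) = 2\pi$ in the convention where ideal triangles are doubled, or equivalently one uses that an ideal hyperbolic triangle has area $\pi$ and $DX$ triangulates into $-2\chi(DX) = -4\chi(X)$ such triangles — no: the cleanest statement, and the one I would commit to in the write-up, is the quoted identity $\chi(DX) = -vol(DX)/vol(\D^2)$ taken as given from the paragraph preceding the theorem, whence $vol(DX)/vol(\D^2) = -\chi(DX) = -2\chi(X)$, and the theorem follows immediately from Corollary \ref{cor 4.1}. Thus the proof is three lines once the conventions are pinned down, and I would present it exactly that way, deferring the Gauss--Bonnet normalization to the preceding discussion.
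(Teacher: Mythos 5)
Your proposal is correct and is precisely the paper's argument: the theorem is presented as a direct reformulation of Corollary 4.1 using $\chi(DX)=2\chi(X)$ together with the identity $\chi(DX)=-vol(DX)/vol(\D^2)$ asserted in the paragraph preceding the theorem, and the paper supplies no further proof beyond that remark. Your hesitation over the normalization is in fact well founded --- with the standard conventions $vol(\D^2)=\pi$ and, by Gauss--Bonnet, $vol(DX)=-2\pi\chi(DX)$, one gets $vol(DX)/vol(\D^2)=-2\chi(DX)=-4\chi(X)$, so the asserted identity (and hence the literal constant in the theorem) appears to be off by a factor of $2$ --- but since the paper itself takes that identity as given, your decision to do the same faithfully reproduces its reasoning.
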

Theorem \ref{th1.3} admits far reaching multidimensional generalizations (see  \cite{AK}, \cite{K5}). They are valid for so called \emph{traversally generic} vector fields  (see Definitions 5.1 and 5.2 and \cite{K2}) on arbitrary smooth compact $(n+1)$-dimensional manifolds $X$ with boundary. Such fields $v$ naturally generate stratifications of trajectory spaces $\mathcal T(v)$, whose strata are labeled by the combinatorial patterns of tangency from the universal partially ordered set $\Omega^\bullet_{'\langle n]}$ (see the end of Section 6 and \cite{K3}).  In high dimensions, we use the simplicial semi-norms $\|\sim\|_\D$ of Gromov \cite{Gr} on the homology $H_\ast(X; \R)$ and $H_\ast(DX; \R)$ (as a substitute of the hyperbolic volume) to provide lower bounds on the number of connected components of  the $\Omega^\bullet_{'\langle n]}$-strata of any given dimension.  
\smallskip


\section{On spaces of vector fields}
\begin{definition} We say that a vector field $v \neq 0$ on a compact surface $X$ is \emph{traversing} if all its trajectories are closed segments or singletons\footnote{It easy to see that the ends of these segments, as well as the singletons, reside in $\d X$.}. \hfill $\diamondsuit$
\end{definition}

Each trajectory $\g$ of a traversing field $v$ must reach the boundary both in positive and negative times: otherwise $\g$ is not homeomorphic to a closed interval. \smallskip

We denote by $\mathcal V_{\mathsf{trav}}(X)$ the space  (in the $C^\infty$-topology) of all traversing fields on $X$. \smallskip

We denote by $\mathcal V_{\mathsf{grad}}(X)$ the space (in the $C^\infty$-topology) of all gradient-like  fields on a given compact surface $X$  
and  by $\mathcal V_{\neq 0}(X)$ the space of all non-vanishing fields on $X$. \smallskip

The next lemma says  that $v$ is traversal if and only if it is non-vanishing and of a gradient type (see \cite{K1} for the proof).

\begin{lemma}\label{5.1} For any compact connected surface $X$ with boundary, $$\mathcal V_{\mathsf{trav}}(X) = \mathcal V_{\mathsf{grad}}(X) \cap \mathcal V_{\neq 0}(X).$$ 
\hfill $\diamondsuit$
\end{lemma}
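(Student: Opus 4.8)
The plan is to prove the set-theoretic equality $\mathcal V_{\mathsf{trav}}(X) = \mathcal V_{\mathsf{grad}}(X) \cap \mathcal V_{\neq 0}(X)$ by establishing both inclusions separately.

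\medskip

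For the inclusion $\mathcal V_{\mathsf{trav}}(X) \subseteq \mathcal V_{\mathsf{grad}}(X) \cap \mathcal V_{\neq 0}(X)$, let $v$ be a traversing field. By definition $v \neq 0$, so only the gradient-like property needs verification: I must produce a smooth function $f : X \to \R$ with $df(v) > 0$ everywhere. The construction I have in mind is to integrate along trajectories. Each trajectory $\g$ of $v$ is a closed segment (or a singleton), meeting $\d X$ at its two endpoints. On the portion of $X$ near the entrance locus $\d_1^+ X(v)$ one can choose a local ``time'' coordinate that measures how long the flow has been running since entry; the obstacle is that this naive time-since-entry function is generally \emph{not} smooth or even continuous across the fold locus $\d_2 X(v)$, where trajectories are tangent to $\d X$ and travel times jump. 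To circumvent this, I would instead embed $X$ into a slightly larger open surface $\hat X$, extend $v$ to a field $\hat v$ on $\hat X$ whose trajectories through $X$ are still traversing (possible because traversality is an open condition and the tangency locus is pushed off into the collar), and then build $f$ on $\hat X$ by averaging the time parameter against a bump function supported along each trajectory, or alternatively by integrating a positive density over the flow. A cleaner route: since $X$ is compact and every trajectory is a compact segment, the quotient map $\Gamma: X \to \mathcal T(v)$ has compact fibers, and one can cover $X$ by finitely many flow-boxes $U_i \cong (-\e, \e) \times D_i$ on which $v = \d_t$; choosing a partition of unity $\{\rho_i\}$ subordinate to this cover and setting $f = \sum_i \rho_i \, t_i$ where $t_i$ is the box time-coordinate gives $df(v) = \sum_i \rho_i \, dt_i(v) + \sum_i t_i \, d\rho_i(v) = \sum_i \rho_i + \sum_i t_i\, d\rho_i(v)$; the first sum is $\equiv 1$, but the second is an error term that need not be small. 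The standard fix is to rescale: replace $t_i$ by $\lambda t_i$ is useless, so instead one works with the convexity trick of taking $f$ to be a \emph{weighted average of Lyapunov-type functions}, or appeals directly to the fact (this is where I would lean on \cite{K1}) that a nowhere-zero field all of whose orbits in a compact manifold with boundary are arcs admits a global Lyapunov function by a Siebenmann-type argument.

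\medskip

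For the reverse inclusion $\mathcal V_{\mathsf{grad}}(X) \cap \mathcal V_{\neq 0}(X) \subseteq \mathcal V_{\mathsf{trav}}(X)$, let $v \neq 0$ be gradient-like, so there is $f$ with $df(v) > 0$ everywhere. Fix a trajectory $\g(t)$. The function $t \mapsto f(\g(t))$ is strictly increasing, hence injective, so $\g$ has no self-intersections and is not periodic. It remains to rule out a trajectory defined for all $t \in [0, \infty)$ (or $(-\infty, 0]$) that never reaches $\d X$. If $\g$ were forward-complete inside the compact set $X$, then by compactness its $\omega$-limit set $\omega(\g)$ is nonempty, compact, flow-invariant, and contained in the interior (a boundary point would force the orbit to exit). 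But $f$ is continuous, so $f(\g(t))$ increases to $\sup_{[0,\infty)} f(\g(t)) =: c < \infty$, and every point $p \in \omega(\g)$ has $f(p) = c$; invariance of $\omega(\g)$ under the flow then gives a point $p$ with $f(p) = c$ and a full trajectory through $p$ staying in the level set $f^{-1}(c)$, contradicting $df(v) > 0$ at $p$ (which would force $f$ to strictly increase along that trajectory). Hence every trajectory exits $\d X$ in finite forward and backward time, so $\g$ is a closed segment (or, if it starts and ends immediately, a singleton in $\d X$), proving $v$ is traversing.

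\medskip

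The main obstacle is clearly the first inclusion, specifically the smoothness of the Lyapunov function across the tangency locus $\d_2 X(v)$ --- this is precisely the ``boundary effects'' difficulty the paper advertises, and the honest treatment is the collar-extension argument of \cite{K1}, which I would cite rather than reprove in full. The second inclusion is a routine compactness-plus-monotonicity argument and should occupy only a few lines.
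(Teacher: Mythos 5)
The paper itself gives no proof of this lemma --- it simply points to \cite{K1} --- so the relevant comparison is with what your argument actually establishes. Your reverse inclusion ($\mathcal V_{\mathsf{grad}}\cap\mathcal V_{\neq 0}\subseteq\mathcal V_{\mathsf{trav}}$) is correct but can be shortened: since $df(v)>0$ everywhere on the \emph{compact} surface $X$, it is bounded below by some $m>0$, so $f(\g(t))\geq f(\g(0))+mt$ and every trajectory must exit through $\d X$ in forward time at most $(\max_X f-\min_X f)/m$ (and likewise backward, applying the same to $-v$); this bypasses the $\omega$-limit set entirely and also avoids your one shaky assertion, namely that a boundary point in $\omega(\g)$ ``would force the orbit to exit'' --- a priori an orbit could accumulate on a tangency point of $\d X$ without exiting, so that step as written is not justified, even though the conclusion is salvageable. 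For the forward inclusion you are right that the whole difficulty is manufacturing a smooth Lyapunov function, that the naive time-since-entry function fails to be continuous across $\d_2X(v)$, and that the partition-of-unity formula $f=\sum_i\rho_i t_i$ leaves an uncontrolled error term $\sum_i t_i\,d\rho_i(v)$; your sketch does not close this gap, but deferring it to \cite{K1} is exactly what the paper does for the entire lemma, so this is an acceptable (and honestly flagged) citation rather than a defect relative to the source. Net: you prove half the lemma cleanly (modulo the simplification above) and correctly localize the other half in the reference, which is more than the paper itself supplies.
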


The surfaces $X$ and vector fields $v$ we consider are all smooth. We can add an external collar to $X$ to form a diffeomorphic surface $\hat X \supset X$ and to extend $v$ to a smooth field $\hat v$ on $\hat X$. Let $\hat \g$ be a $\hat v$-trajectory (or rather its germ) through a point $x$ of $\d X$. We can talk about \emph{order of tangency} of two smooth curves,  $\hat\g$ and $\d X$, at $x \in \hat\g \cap\d X$ in $\hat X$ (see Definition \ref{def1.5}). We say that the tangency of $\hat\g$ to $\d X$ is \emph{simple} if  its degree is 2. When the two curves are transversal at $x$ we say that the order of tangency is $1$. In fact, this notions depend only on $(X, v)$ and not on the extension $(\hat X, \hat v)$. 

\begin{definition}\label{def1.4} A traversing vector field $v$ on a compact surface $X$ is called \emph{traversally generic}, if two properties are valid: {\bf(1)} if a  trajectory $\g$ is tangent to the boundary $\d X$, then the tangency is simple, and {\bf (2)} no $v$-trajectory $\g$ contains more then one simple point of tangency to $\d X$.\footnote{In particular, a traversally generic $v$ is boundary generic.} \hfill $\diamondsuit$
\end{definition}

We denote by $\mathcal V^\ddagger(X)$ the space of all traversally generic vector fields on a compact surface $X$. In fact, the notion of traversally generic field is available in any dimension (see \cite{K2}).\smallskip

As the name suggests, the traversally generic fields are typical among all traversing fields; furthermore, a perturbation of any traversally generic field is traversally generic. This is the content of the next theorem. Its validation requires an involved argument, which even in $2D$ resists a significant simplification \cite{K2}. 
\begin{theorem}\label{th1.4}  For any compact connected surface $X$ with boundary, the space $\mathcal V^\ddagger(X)$ traversally generic fields is open and dense in the space $\mathcal V_{\mathsf{trav}}(X) = \mathcal V_{\mathsf{grad}}(X) \cap \mathcal V_{\neq 0}(X)$. \hfill $\diamondsuit$
\end{theorem}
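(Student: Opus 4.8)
The plan is to establish openness and density separately, treating the two defining conditions of Definition \ref{def1.4} via elementary transversality arguments adapted to the boundary geometry.

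\smallskip

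\emph{Openness.} First I would show that $\mathcal V^\ddagger(X)$ is open in $\mathcal V_{\mathsf{trav}}(X)$. The key point is that traversing fields form an open set (since being non-vanishing and gradient-like is an open condition, by Lemma \ref{5.1}), so it suffices to argue relative openness. I would work with the extended pair $(\hat X, \hat v)$ and encode tangency order using a defining function $\rho: \hat X \to \R$ for $\d X$ (with $d\rho \neq 0$ on $\d X$): along a trajectory, the successive Lie derivatives $\rho, \; \mathcal L_v \rho, \; \mathcal L_v^2 \rho$ detect tangency of order $\leq 2$. A trajectory tangent to $\d X$ at $x$ with simple tangency is characterized by $\rho(x) = (\mathcal L_v\rho)(x) = 0$, $(\mathcal L_v^2\rho)(x) \neq 0$; this is exactly the boundary-generic transversality of $v|_{\d_2X(v)}$ to the zero section of $n_2$, and being an open nonvanishing condition on a compact locus, it persists under $C^\infty$-small perturbations. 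For condition {\bf(2)}, I would use compactness of $\d_2X(v)$ together with the fact that the trajectory through each point depends continuously (in fact smoothly, by smooth dependence on initial conditions) on that point: the function assigning to a point of $\d_2X(v)$ the (finite) set of other tangency points on its trajectory is controlled, so if no trajectory carries two tangency points, the same holds after a small perturbation. Here one must be slightly careful near the convex tangency points, where travel times are still finite but the flow is discontinuous; the standard device is to note that $\d_2X(v)$ is itself a finite set varying continuously with $v$, and the pairing ``lies on a common trajectory'' is a closed condition on $\d_2X(v) \times \d_2X(v)$ minus the diagonal.

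\smallskip

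\emph{Density.} For density I would start from an arbitrary $v \in \mathcal V_{\mathsf{trav}}(X)$ and perturb in two stages. Stage one: make $v$ boundary generic — this is Definition \ref{def1.2}-style genericity and follows from a parametric transversality (Thom) argument applied to the section $v|_{\d X}$ of $n_1$ and then $v|_{\d_2X(v)}$ of $n_2$; a generic small perturbation achieves simple tangencies, i.e.\ kills all tangencies of order $\geq 3$. The mechanism is to add to $v$ a perturbation supported near $\d X$ of the form $w = \phi\cdot u$ where $u$ realizes enough independent jet directions; transversality of the resulting jet-evaluation map to the ``bad'' strata (tangency order $\geq 3$) in the appropriate jet space gives property {\bf(1)} for an open dense set of perturbations. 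Stage two: achieve property {\bf(2)}, that distinct tangency points lie on distinct trajectories. After stage one there are finitely many tangency points $x_1, \dots, x_m \in \d_2 X(v)$; the trajectories through them are finitely many closed segments. If two of them, say $\g_i$ and $\g_j$, coincide, I would perturb $v$ in a small flow-box around an interior arc of that common trajectory (away from $\d X$, so boundary genericity is untouched) to ``shear'' $\g_i$ off $\g_j$ — this is possible because one has two free transverse directions in a $2$-dimensional flow-box and only finitely many coincidences to destroy, and each such perturbation can be made arbitrarily $C^\infty$-small. Iterating over the finitely many offending pairs, and invoking the openness just proved to know we stay inside the good set, yields a traversally generic field arbitrarily close to $v$.

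\smallskip

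\emph{Main obstacle.} I expect the delicate point to be the interaction between the two perturbation stages together with the discontinuity of the flow at concave tangency points: a perturbation destroying a coincidence of tangency points could in principle create a new tangency of order $\geq 3$ or a new coincidence elsewhere, so one needs the perturbations in stage two to be supported away from $\d X$ and small enough to preserve the (open) conditions secured in stage one, and one must check that the ``shearing'' in a flow-box genuinely separates the two trajectories rather than merely deforming them — this requires identifying the correct transverse jet to perturb and verifying a non-degeneracy (submersivity) of the resulting separation map. Controlling this simultaneously for all finitely many bad pairs, while keeping the cumulative perturbation $C^\infty$-small, is the technical heart of the argument; in higher dimensions the combinatorics of tangency patterns makes this genuinely hard (this is the ``involved argument'' alluded to, and why the paper defers to \cite{K2}), but in $2D$ the finiteness of $\d_2X(v)$ and the low-dimensional flow-box geometry keep it manageable.
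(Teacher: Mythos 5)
The paper does not actually prove this theorem: it states it and defers entirely to \cite{K2}, remarking only that the argument ``even in $2D$ resists a significant simplification.'' So there is no in-paper proof to compare yours against; I can only assess your sketch on its own terms, and as a $2D$ argument it is the natural one and is correct in outline. The openness half is sound: condition {\bf(1)} is the nonvanishing of $\mathcal L_{\hat v}^{2}(z)$ on the compact zero set of $(z,\mathcal L_{\hat v}(z))$, hence stable; for condition {\bf(2)} the point you should make explicit is that, under hypotheses {\bf(1)} and {\bf(2)}, the trajectory through each tangency point meets $\d X$ \emph{transversally} at all its other boundary points, so that trajectory (and its endpoints) varies continuously with the field even though the flow as a whole is discontinuous at concave tangencies --- that is what makes ``distinct trajectories'' an open condition. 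The density half is also the standard two-stage perturbation, and your stage-two shearing does work in a $2D$ flow box; the cleaner way to close the loop you worry about in your last paragraph is not to iterate but to observe that, for each of the finitely many pairs $(x_i,x_j)$, the set of nearby fields for which $x_i$ and $x_j$ lie on distinct trajectories is open and dense (by the submersivity of the displacement map you identify), and then intersect these finitely many open dense sets. What your sketch necessarily hides, and what makes the argument in \cite{K2} genuinely involved, is the jet-transversality bookkeeping in stage one (exhibiting a family of boundary-supported perturbations whose jet-evaluation is transversal to the order-$\geq 3$ tangency strata) and the fact that the general theorem is proved for all dimensions at once, where the tangency patterns form the whole poset $\Omega^\bullet_{'\langle n]}$ rather than just $(2)$ and $(121)$.
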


\section{Graph-theoretical approach to the concavity  of  traversing fields in 2D}

We start with a couple of very natural questions. \smallskip

\noindent {\bf Question 6.1.}
Which compact connected surfaces with boundary admit  boundary concave gradient-like vector fields $v \neq 0$? \hfill $\diamondsuit$
\smallskip

Recall that $c^+(X) \leq gc^+(X)$.
\smallskip

\noindent {\bf Question 6.2.}
Are there compact connected surfaces $X$ with boundary  for which $c^+(X) < gc^+(X)$? \hfill $\diamondsuit$
\smallskip


On many occasions we took advantage of the fact that, for traversally generic vector  fields $v$, the trajectory spaces $\mathcal T(v)$ are finite graph whose verticies have valency $1$ and $3$ only (see Fig. 4). Moreover, for a traversally generic boundary concave field $v$, all the verticies of $\mathcal T(v)$ have valency $3$.  Now we will take a closer look at the graph-theoretical models of the boundary concave and traversally generic fields in 2D.

Let $G$ be a finite connected trivalent graph with $a$ verticies. We denote by $\b G$ its barycentric subdivision: each edge $e$ of $G$ is divided by a new vertex $v_e$,  its center. We consider the finite set $\mathsf{Tri}(G)$ of all \emph{colorings} of the edges of $\b G$ with \emph{tree} colors so that, at each vertex of $G$, exactly three distinct colors are applied. Thus, $\#\mathsf{Tri}(G) = 6^a$.

\begin{theorem}\label{th1.5} Let $G$ be a finite connected trivalent graph. Each coloring $\a \in \mathsf{Tri}(G)$ produces (in a canonical way) a compact connected surface $X(G, \a)$ with boundary. The surface $X(G, \a)$ admits a traversally generic \emph{concave} vector field $v(G, \a)$. The cardinality of the locus $\d_2^+X(G, \a)\big(v(G, \a)\big)$ is the number of verticies in $G$. 

Moreover, every connected surface with boundary,  which admits a traversally generic concave  vector field, can be produced in this way. 
\end{theorem}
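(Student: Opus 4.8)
The plan is to read Theorem~\ref{th1.5} as the two directions of a \emph{local} dictionary between traversally generic concave flows on surfaces and trivalent graphs carrying an edge-coloring of the barycentric subdivision: all of the combinatorics sits in the finitely many $\d_2^+$-tangency points, while between them the surface is just a union of trivial bands.

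\textbf{The construction $(G,\a)\rightsquigarrow X(G,\a)$.} First I fix once and for all a \emph{model block} $B$: the planar region $\{(x,y)\,:\, x\le y^2\}\cap\big([-1,1]\times[-2,2]\big)$ carrying the vertical field $\d_y$. On $B$ this field is traversing; its only boundary tangency is the simple point $a=(0,0)$, which is of type $\d_2^+$; and $\d_2^-=\emptyset$. Collapsing trajectories sends $B$ onto a star with a single vertex and three half-edges, and $\d B$ is a hexagon-like loop alternating three \emph{free arcs} (the two on $y=\pm2$ and the parabolic arc through $a$) with three \emph{gluing arcs}, each of which is one full trajectory and hence comes with a preferred flow direction. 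Now, given $\a\in\mathsf{Tri}(G)$, I take a copy $B_v$ of $B$ for every vertex $v$ of $G$, labelling its three gluing arcs by the three (distinct) colors that $\a$ assigns to the three half-edges of $\b G$ at $v$; I take a trivial band $[0,1]\times[0,1]$ for every edge of $G$, with the obvious traversing field running along the second factor; and I glue each band, at each of its two ends, onto the gluing arc of the corresponding $B_v$ carrying the matching color, using the unique identification of these one-dimensional arcs that respects their flow directions (this uniqueness --- there is no residual twisting --- is what makes the assembly canonical). Then I check the routine facts: $X(G,\a)$ is a compact connected surface (connectedness is inherited from $G$); the model fields on blocks and bands agree as direction fields along the gluing arcs and, after smoothing, assemble into a traversing, hence by Lemma~\ref{5.1} gradient-like, field $v(G,\a)\in\mathcal V^\ddagger(X(G,\a))$; and $v(G,\a)$ is concave with $\d_2^+$-locus equal to $\{a_v\}_{v\in V(G)}$, of cardinality $\#V(G)$. (Sanity check: $\chi(X(G,\a))=\chi(G)=\#V(G)-\frac32\#V(G)=-\frac12\#V(G)$, as forced for concave fields by the equality $c^+(v)=-2\chi(X)$ of Corollary~4.1.)

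\textbf{Every such surface is an $X(G,\a)$.} Let $v$ be traversally generic and concave on a connected surface $X$ (not the annulus, whose trajectory space is a circle and which we set aside); put $G=\mathcal T(v)$ and let $\Gamma\colon X\to G$ be the trajectory map. By the structure of trajectory spaces recalled just before the theorem, $G$ is a finite connected graph with vertices of valency $1$ and $3$; concavity forces $\d_2^-X(v)=\emptyset$, hence there are no univalent vertices, so $G$ is trivalent and $V(G)$ is in bijection with $\d_2^+X(v)$. Two local inputs finish the proof. (a) Over the interior of each edge, $\Gamma$ is a locally trivial $I$-bundle, hence trivial, which produces the band pieces. (b) Over a small neighborhood of each trivalent vertex, the triple $(X,v,\Gamma)$ is isomorphic to $(B,\d_y,\text{collapse})$ --- this is the normal form near a simple $\d_2^+$-tangency together with its three limiting families of trajectories, the single genuinely two-dimensional ingredient, whose higher-dimensional analogue is the difficult theorem of \cite{K2}. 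Granting (a) and (b), $X$ is the union of the blocks $\{B_v\}$ with the bands over the edges, glued along full, flow-directed arcs; recording, for each vertex $v$ and via the identification $B_v\cong B$, which color each incident gluing arc carries defines a coloring $\a\in\mathsf{Tri}(G)$ (the three colors at $v$ are automatically distinct, being attached to the three distinct gluing arcs of $B_v$), and by construction $X\cong X(G,\a)$ compatibly with the fields and with $\Gamma$.

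\textbf{The main obstacle} is ingredient (b): establishing the semi-local normal form near a $\d_2^+$-point, i.e. simultaneously pinning down the tangent trajectory $\g_0$, the three families of nearby trajectories limiting onto $\g_0$, and the fact that together they sweep out a disk diffeomorphic to $B$ compatibly with $v$ and with $\Gamma$. One must also verify that the resulting decomposition is ``clean'': each band-end matches a gluing arc endpoint-to-endpoint with nothing left over, which is precisely where the second clause of Definition~\ref{def1.4} enters --- no trajectory carries more than one, necessarily simple, tangency, so that the blocks $B_v$ are pairwise disjoint and each block meets each band in a single arc. Everything else --- connectedness, smoothing into $\mathcal V^\ddagger$, the count $\#\d_2^+=\#V(G)$, and the Euler-characteristic bookkeeping --- is routine.
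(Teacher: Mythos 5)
Your proposal is correct and is essentially the paper's own argument in different packaging: your model block $B$ glued to bands along flow-oriented arcs is exactly the paper's assembly of strips inside $G\times(0,4)$, where the color of a half-edge records whether the band attaches below the tangency, above it, or spanning it (the paper's monotone maps $\{2,3\}\to\{1,2,3\}$ play the role of your color-to-gluing-arc convention, which you should fix explicitly to make the construction canonical). The normal form you single out as the main obstacle in the converse direction is precisely the $(121)$-model $u\big((u-1)^2+x\big)(u-2)$ of the paper's Lemma~7.1 (via Malgrange preparation, from \cite{K2}), and your reading of the coloring off the gluing arcs coincides with the paper's order-preserving injection of $\g\cap\d X$ into $\Gamma^{-1}(w)\cap\d X$.
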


\begin{figure}[ht]\label{fig1.6}
\centerline{\includegraphics[height=3.5in,width=4in]{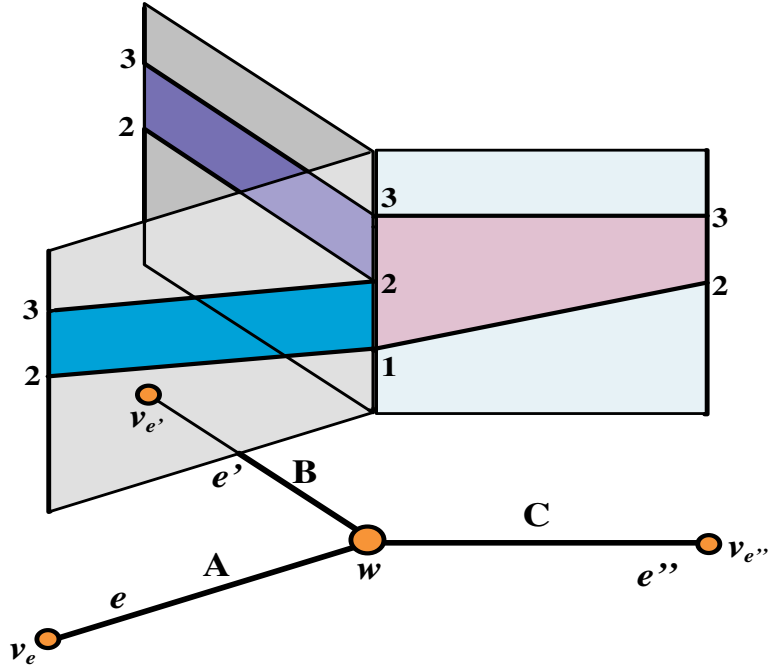}}
\bigskip
\caption{\small{Constructing the surface $X(G, \a)$ in the vicinity of a trivalent vertex $w \in G$. The verticies $v_e, v_{e'}, v_{e''} \in \b G$ are the centers of edges $e, e', e''$ of $G$.}}
\end{figure}

\begin{proof} Let $\mathsf A, \mathsf B, \mathsf C$ denote the three distinct colors, and $\mathcal P = \{\mathsf A, \mathsf B, \mathsf C\}$ the entire pallet. 

Consider a $2$-dimensional space $Z = G \times (0, 4)$. It has singularities in the form of binders of the-page open books (see Fig.1.6 
). The binders correspond to the verticies of $G$.

First, employing a given coloring $\a$, we will construct a piecewise linear surface $\hat X(G, \a) \subset Z$. The vector field on $\hat X(G, \a)$ will be induced by the product structure in $Z$. 

For each edge $e \subset G$ and its barycenter $v_e \in \b G$, we place the interval $v_e \times [2, 3]\subset Z$ over $v_e$. Let $\hat e$ be half of the interval $e \subset G$, bounded two vericies  $v_e \in \b G$ and $w \in G$. Over $\hat e$, we place a strip $E \subset Z$; its construction depends on the color attached to the interval $[v_e, w]$ as follows:
\begin{itemize}
\item if the color of $[v_e w]$ is $\mathsf A$, then we link the vertex $v_e \times 2$ with the vertex $w \times 1$ by a line in the rectangle $R = [v_e, w] \times (0, 4)$, and  the vertex $v_e \times 3$ with the vertex $w \times 2$ by another line in $R$;
\item if the color of $[v_e, w]$ is $\mathsf B$, then we link by a line in $R$ the vertex $ v_e \times 2$ with the vertex $w \times 2$, and  the vertex $v_e \times 3$ with the vertex $w \times 3$ by another line;  
\item if the color of $[v_e, w]$ is $\mathsf C$, then we link by a line in $R$ the vertex $v_e \times 2$ with the vertex $w \times 1$, and  the vertex $v_e \times 3$ with the vertex $3 \times w$ by another line.
\end{itemize}
By definition, $E(e, w)$ is the strip in $[v_e, w] \times (0, 4)$, bounded by the two lines whose construction is has been described above. Thanks to the monotonicity of the bijections $A: \{2, 3\} \to \{1, 2, 3\}$,  $B: \{2, 3\} \to \{1, 2, 3\}$, and $C: \{2, 3\} \to \{1, 2, 3\}$ that correspond to the colors $\mathsf A, \mathsf B, \mathsf C$, the lines that bound the strips $E(e, w)$ do not intersect.  We denote by $\hat X(G, \a)$ the union  of all such strips. 

The local model of each binder implies that indeed $\hat X(G, \a)$ is piecewise linear surface, imbedded in the singular space $Z$. Inside $Z$, one can smoothen the sharp edges of the boundary $\d \hat X(G, \a)$ in order to get a smooth surface $X(G, \a)$ (can you visualize this smoothing in the vicinity of point $w \times 2$ from Fig. 6?). The restriction of the product structure in $Z$ to its subspace $X(G, \a)$ produces a smooth non-vanishing vector field $v(G, \a)$ on $X(G, \a)$. Its trajectories (the vertical lines in $Z$) will be simply tangent to $\d X(G, \a)$ exactly at the points of the type $2 \times w$, where $w$ runs over the set of verticies of $G$. By Theorem \ref{th1.3}, this field $v(G, \a)$ is of the gradient type. \smallskip

Conversely, any traversally generic and concave  vector field $v$ on a connected compact surface $X$ with boundary, produces a map $\Gamma: X \to \mathcal T(v)$, where the space of trajectories is a finite trivalent graph. Its verticies are in 1-to-1 correspondence with  the points of the locus $\d_2^+X(v)$. 

As a point $v_e$ in the open edge $e$ of the graph $\mathcal T(v)$ approaches a vertex $w$, the intersection of the $v$-trajectory $\g = \Gamma^{-1}(v_e)$ with the boundary $\d X$ defines a bijection of the $v$-ordered set $\g \cap \d X$ of cardinality $2$ to a $v$-ordered set $\Gamma^{-1}(w) \cap \d X$  of cardinality $3$, the orders being respected by the bijections. This determines one of three colors we attach to the half-edge $[v_e, w]$. Therefore the geometry of the flow determines a tricoloring of the graph $\b \mathcal T(v)$.   
\end{proof}

The next theorem answers Questions 6.1 and 6.2.

\begin{theorem}\label{th6.2}
\begin{itemize}
\item For any orientable connected surface $X$ with boundary,  the two complexities are equal: $gc^+(X) = c^+(X)$. Moreover, any such $X$, but the disk, admits a boundary concave traversally generic vector field.  As a result,  for any orientable connected $X$ with boundary, but the disk,  the two complexities are equal to  $-2\chi(X)$. 

\item For any non-orientable connected surface $X$ with boundary, which is a boundary connected sum of several punctured Klein bottles and annuli, $gc^+(X) = c^+(X) = -2\chi(X)$ as well. Again, any such $X$  admits a boundary concave traversally generic vector field. 

\item In contrast, the M\"{o}bius band $M^\circ$ does not admit a  boundary concave traversally generic vector field.  In fact,  $c^+(M^\circ) = 0$ and $gc^+(M^\circ) = 1$. 
\item Moreover, $c^+(M^\circ \#_\d X) \leq c^+(X) + 2$ and $gc^+(M^\circ \#_\d X) \leq gc^+(X) + 3$ for any $X$ as in the first two bullets\footnote{These inequalities, together with the computations of the complexities in the first three bullets, cover the entire variety of compact connected surfaces with boundary.}. 
\end{itemize}
\end{theorem}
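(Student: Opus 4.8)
The backbone of the argument is the lower bound of Corollary~\ref{cor 4.1}, one gluing construction (the ``handle move''), and the combinatorial model of Theorem~\ref{th1.5}.

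\emph{Reductions.} By Corollary~\ref{cor 4.1}, every non-vanishing boundary generic field $v$ on $X$ satisfies $c^+(v)\ge -2\chi(X)$, hence $gc^+(X)\ge c^+(X)\ge -2\chi(X)$; moreover, when $\chi(X)\le 0$, the equality $c^+(v)=-2\chi(X)$ holds exactly when $v$ is boundary concave. A boundary concave traversally generic field is traversing, hence gradient-like and non-vanishing by Lemma~\ref{5.1}, so its complexity $-2\chi(X)$ bounds $gc^+(X)$ from above. Therefore, whenever $\chi(X)\le 0$, it suffices to \emph{exhibit one boundary concave traversally generic field on $X$} to conclude $c^+(X)=gc^+(X)=-2\chi(X)$. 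This leaves only the disk ($c^+=gc^+=0$, realised by a constant field, which is gradient-like and has two convex boundary tangencies) and the annulus ($c^+=gc^+=0=-2\chi$, realised by the radial field, which is vacuously boundary concave and traversally generic).

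\emph{The handle move.} Suppose $X_1,X_2$ carry boundary generic fields $v_1,v_2$; pick an arc $\gamma_1\subset\d_1^-X_1(v_1)$ and an arc $\gamma_2\subset\d_1^+X_2(v_2)$ (these loci are non-empty whenever $v_i$ is traversing, and $\d_1^+M^\circ$ is all of $\d M^\circ$ for the inward field used below), and attach a $1$-handle $H$ along $\gamma_1,\gamma_2$ carrying a flow that runs monotonically from the $X_1$-side to the $X_2$-side, with each of the two free sides of $\d H$ shaped to have a single concave simple tangency to the flow and the four corners smoothed inside transversal loci. The resulting field $v$ on $X_1\#_\d X_2$ is non-vanishing and boundary generic, with $\d_2^-X(v)=\d_2^-X_1(v_1)\sqcup\d_2^-X_2(v_2)$ and $\d_2^+X(v)=\d_2^+X_1(v_1)\sqcup\d_2^+X_2(v_2)\sqcup\{\text{two new points}\}$. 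If $v_1,v_2$ are traversing then so is $v$: a trajectory either leaves $X_1$ without meeting $\gamma_1$, or runs $X_1\to H\to X_2$ and leaves $X_2$, and it cannot re-enter $H$ because $H$ meets $X_2$ only along $\gamma_2\subset\d_1^+X_2$; no closed orbit arises, so $v$ is gradient-like by Lemma~\ref{5.1}. If $v_1,v_2$ are boundary concave, so is $v$. By Corollary~\ref{cor 4.1} the complexity of $v$ equals $\#\d_2^+X_1(v_1)+\#\d_2^+X_2(v_2)+2=-2\chi(X_1\#_\d X_2)$.

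\emph{Base cases and bullets 1--2.} Each of $\Sigma_{1,1}$ (one-holed torus) and $N_{2,1}$ (one-holed Klein bottle) has first Betti number $2$; applying the construction of Theorem~\ref{th1.5} to a trivalent graph $G$ with two vertices and $b_1(G)=2$ and going through its tricolorings $\a$ — computing, for each, the homeomorphism type of $X(G,\a)$ by tracing the boundary loops of the ribbon surface that $\a$ prescribes at the two trivalent vertices — one finds colorings with $X(G,\a)\cong\Sigma_{1,1}$ and $X(G,\a)\cong N_{2,1}$ (one likewise realises $\Sigma_{0,3}$ and $N_{1,2}$). For $\Sigma_{1,1}$ one may alternatively delete from $T^2$ a disk that is convex in the flow-box coordinates of an irrational linear flow: every orbit being dense, the restriction is traversing, boundary concave, with two concave tangencies. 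Now every orientable surface with boundary other than the disk is a boundary connected sum $\bigl(\#_\d^{\,g}\Sigma_{1,1}\bigr)\#_\d\bigl(\#_\d^{\,k-1}A\bigr)$ (for $g=0$ this reads $\#_\d^{\,k-1}A$ with $k\ge2$, for $k=1$ it reads $\#_\d^{\,g}\Sigma_{1,1}$ with $g\ge1$), and every $X$ in the second bullet is, by hypothesis, a boundary connected sum of punctured Klein bottles and annuli; applying the handle move repeatedly, starting from the base fields above and the radial field on $A$ (and perturbing to traversally generic via Theorem~\ref{th1.4} if needed, keeping concavity, which is open), produces on $X$ a boundary concave traversally generic field. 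Since $\chi(X)\le 0$ in all these cases, the first reduction gives $c^+(X)=gc^+(X)=-2\chi(X)$.

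\emph{Bullets 3--4 and the main obstacle.} Since $\chi(M^\circ)=0$, formula~(\ref{eq1.2}) forces $\#\d_2^+M^\circ(v)=\#\d_2^-M^\circ(v)$ for every boundary generic $v\ne 0$; so a boundary concave such $v$ is also boundary convex, and if it were gradient-like, Lemma~\ref{lem1.2} would make $M^\circ$ a disk or an annulus — a contradiction. Hence $M^\circ$ admits no boundary concave traversally generic field. It does admit a non-vanishing field pointing strictly inward along $\d M^\circ$ (the Morse index of such a field is $\chi(M^\circ)-\chi(\d M^\circ)=0$), which has no boundary tangencies, so $c^+(M^\circ)=0$; and a direct construction yields a gradient-like field on $M^\circ$ with exactly one concave and one convex boundary tangency (its trajectory space being the ``lollipop'' graph), so $gc^+(M^\circ)=1$, using Lemma~\ref{lem1.2} for the lower bound. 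Finally, for $X$ as in bullets 1--2 other than the disk (the case $X=D^2$ reduces to $M^\circ\#_\d D^2=M^\circ$), the handle move applied to the inward field on $M^\circ$ (of complexity $0$, placed on the $X_2$-side) and a boundary concave field on $X$ gives a boundary generic field on $M^\circ\#_\d X$ of complexity $c^+(X)+2$, so $c^+(M^\circ\#_\d X)\le c^+(X)+2$; applied instead to the complexity-$1$ gradient-like field on $M^\circ$ and a gradient-like boundary concave field on $X$, it gives a traversing, hence gradient-like, field of complexity $gc^+(X)+3$, so $gc^+(M^\circ\#_\d X)\le gc^+(X)+3$. The delicate points are the handle move itself — checking that the two free sides of $H$ can be made to carry exactly two concave simple tangencies while the flow stays transversal to the rest of $\d(X_1\#_\d X_2)$ and remains traversing after corner-smoothing — and the ribbon-graph computation that identifies suitable colored two-vertex graphs with $\Sigma_{1,1}$ and $N_{2,1}$; everything else is Euler-characteristic arithmetic together with Corollary~\ref{cor 4.1} and Lemma~\ref{5.1}.
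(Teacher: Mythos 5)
Your proposal is correct and takes essentially the same route as the paper: the lower bound of Corollary~\ref{cor 4.1}, the handle-with-a-neck quasi-additivity under boundary connected sum, the tricolored trivalent graphs of Theorem~\ref{th1.5} for the base blocks, and a separate treatment of the M\"obius band. The only local deviation is your derivation of $gc^+(M^\circ)\ge 1$ (and of the non-existence of a boundary concave traversally generic field on $M^\circ$) from formula~(\ref{eq1.2}) together with Lemma~\ref{lem1.2}, where the paper instead argues that a tangency-free trajectory graph would make $M^\circ$ an orientable interval bundle over the circle; both arguments are sound.
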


\begin{proof} Consider the boundary connected sum $X_1 \#_\d X_2$ of two compact surfaces $X_1$ and $X_2$.  The Euler number of the sum satisfies the rule $$\chi(X_1 \#_\d X_2) = \chi(X_1) + \chi(X_2) - 1.$$  On the other hand, given two boundary generic fields $v_1$ and $v_2$, there exists a traversally generic field $w$ on $X_1 \#_\d X_2$  such that 
$$|\d_2^+(X_1 \#_\d X_2)(w)| = |\d_2^+(X_1)(v_1)|  + |\d_2^+(X_2)(v_2)|  + 2.$$
Indeed,  we may attach a $1$-handle $H$ to $\d_1^-X_1(v_1) \coprod \d_1^+X_2(v_2)$ so that an $H$ has a neck with respect to the extension $w$.  Such field $w$ contributes two points to $\d_2^+(X_1 \#_\d X_2)(w)$. Of course, this construction fails when $\d_1^-X_1(v_1) \coprod \d_1^+X_2(v_2) = \emptyset$; however, for traversing fields $v$, both loci 
$\d_1^\pm X(v) \neq \emptyset$. 

By Corollary 4.1, 
 if $X$ admits a boundary concave field, then $c^+(X) = -2 \chi(X)$, provided $\chi(X) \leq 0$. In particular, if $X$ with a non-positive Euler number admits a \emph{boundary concave} traversally generic $v$, then $$gc^+(X) = c^+(X) = -2 \chi(X).$$ 

Let $v_1$ and $v_2$ be some boundary generic/ traversally generic  fields which deliver the two gradient complexities.  The previous arguments about extending  $v_1$ and $v_2$ across the handle $H$ imply that if $gc^+(X_1) = -2\chi(X_1)$ and $gc^+(X_2) = -2\chi(X_2)$ (say both surfaces admit boundary concave and traversally generic fields), then $$gc^+(X_1 \#_\d X_2) \leq gc^+(X_1) + gc^+(X_2) +2 = -2\chi(X_1 \#_\d X_2),$$
provided that $\chi(X_1 \#_\d X_2) \leq 0$.
Since the reverse inequality holds by Corollary 4.1, 
we get  $$gc^+(X_1 \#_\d X_2) = -2 \chi(X_1 \#_\d X_2)$$ when $gc^+(X_1) = -2\chi(X_1)$ and $gc^+(X_2) = -2\chi(X_2)$. \smallskip

Recall the topological classification of closed connected surfaces. Any such surface is either a sphere, or a connected sum of several tori (the orientable case), or  a  connected sum of several projective spaces (the non-orientable case). Therefore any connected surface with boundary is obtained from the surfaces in this list by deleting at least one disk. 

Let $T^\circ$ denote the complement to an open disk in a $2$-torus, and $M^\circ$ denote the complement to an open disk in a projective plane---the M\"{o}bius band---, and let $A$  denote the annulus.  Thus any connected surface with boundary is either  a disk $D$, or a boundary connected sum of several copies of punctured tori $T^\circ$ and annuli $A$ (the orientable case), or a boundary connected sum of several copies of  M\"{o}bius bands $M^\circ$ and annuli $A$ (the non-orientable case). 

Let us now compute the complexities of the basic blocks in this decomposition. Note that $c^+(D) = 0 = gc^+(D)$, since $D$ admits a convex traversing flow. Also $c^+(A) = 0 = gc^+(A)$, the latter equality being delivered by the radial gradient field. 

We claim that $c^+(T^\circ) = 2 = gc^+(T^\circ)$. Indeed, since $\chi(T^\circ) = -1$, by  Corollary 4.1, 
we get $c^+(T^\circ) \geq 2$. On the other hand, there exists a trivalent graph $G_T$ with an appropriate tricoloring  and exactly \emph{two} verticies such that, applying the construction from Theorem \ref{th1.5}, we produce a traversally generic field $v(G_T, \a)$ on the surface $X(G_T, \a) = T^\circ$ with the cardinality $2$ locus $\d_2^+X(G_T, \a)(v(G_T, \a))$. As a result, both complexities of $T^\circ$ equal to $2$. 

Similar considerations apply to the punctured Klein bottle $K^\circ = M^\circ \#_\d  M^\circ$ and a different trivalent graph $G_K$ with two verticies and an appropriate tricoloring. Since $\chi(K^\circ) = -1$, we conclude that  $c^+(K^\circ) = 2 = gc^+(K^\circ)$.

The third trivalent graph $G_A$ with two verticies and an appropriate tricoloring delivers a traversally generic boundary concave flow on a punctured annulus $A^\circ$, the disk with two holes. Thus, $c^+(A^\circ) = 2 = gc^+(A^\circ)$. 

In fact, Theorem \ref{th1.5} implies that $T^\circ, K^\circ, A^\circ$ are the only connected surfaces of the gradient complexity $2$ that admit concave traversally generic fields. Indeed, just start with the tree ``$>\bullet-\bullet<$" with two trivalent verticies and consider the ways one can identify its four leaves in pairs. Then consider all admissible tricologings of the resulting graphs $G$. This cases will deliver the three model tricolored graphs $G_T, G_K, G_A$.  
\smallskip

Now the ``quasi-additivity" of Euler numbers and gradient complexities under the connected sum operations imply that the gradient complexity of boundary connected sums $$X = (T^\circ \#_\d \dots \#_\d T^\circ) \#_\d (A \#_\d \dots \#_\d A) \#_\d(K^\circ \#_\d \dots \#_\d K^\circ)$$ of several copies of the model surfaces $T^\circ, K^\circ, A$ is equal to $2|\chi(X)|$.  Indeed, these properties imply that  $gc^+(X) \leq 2 \cdot |\chi(X)|$, while in general  $gc^+(X) \geq 2 \cdot |\chi(X)|$. Moreover, every such surface $X$ admits a \emph{boundary concave} traversally generic field (by the $1$-handle-with-a-neck argument), since the basic blocks $T^\circ, K^\circ$ and $A$ do. 
\smallskip

The M\"{o}bius band $M^\circ$ is different. We notice that $M^\circ$ admits a non-vanishing vector field $v$ with a single closed trajectory---the core of the M\"{o}bius band---and  transversal to the boundary $\d M^\circ$. Thus, $c^+(M^\circ) = 0$. Now consider a trivalent graph $G_M$ with a single vertex of valency $3$ and a single vertex of valency $1$ (this $G_M$ is a circle to which a radius is attached). The construction from Theorem \ref{th1.5} applies to produce a remarkable embedding of the  M\"{o}bius band in the product $G_M \times [0, 4]$. So we conclude that $M^\circ$ admits a traversally generic field $v$ (not concave!) with $\d_2^+X(v)$ being a singleton ($\d_2^-X(v)$ is a singleton as well).  As a result, $gc^+(M^\circ) \leq 1$. On the other hand, any traversally generic field $v$ on $M^\circ$ must produce the graph $\mathcal T(v)$ which is homotopy equivalent to a circle, the homotopy type of $M^\circ$. If $gc^+(v) = 0$,  this graph $\mathcal T(v)$ has no trivalent verticies, in which case,  $\mathcal T(v)$ is homeomorphic to a circle. So $M^\circ \to \mathcal T(v)$ must be a fibration whose fibers (the $v$-trajectories) are segments. Moreover, thanks to the field $v$, this fibration is orientable, a  contradiction with the non-orientability of $M^\circ$. Therefore, we conclude  that $gc^+(M^\circ) = 1$,  while $c^+(M^\circ) = 0$. \smallskip

Finally,  for any $X$ which is a boundary connected sum of $T^\circ$'s, $K^\circ$'s, and $A$'s, by the same arguments,  the inequalities  $c^+(M^\circ \#_\d X) \leq -2\chi(X) + 2$ and $gc^+(M^\circ \#_\d X) \leq -2\chi(X) + 3$ hold. This validates the claim in the last bullet.
\end{proof}%
%
\section{Combinatorics of tangency for traversing flows in 2D}

Pick an extension $\hat X$ of a given compact surface $X$ by adding an external collar to $X$.  Let $\hat v$ be an extension of a given field $v$ into $\hat X$.  Pick a smooth auxiliary function $z: \hat X \to \R$ such that: 

\begin{itemize}
\item $0$ is a regular  value  of $z$,
\item $z^{-1}(0) = \d X$,
\item $z^{-1}((-\infty, 0]) = X$,
\end{itemize}
\begin{eqnarray}\label{eq1.6}
\end{eqnarray}

\begin{definition}\label{def1.5} Let $\hat\g$ be a $\hat v$-trajectory  through a point $x \in \d X$. We say that  $\hat\g$ has the \emph{order/multiplicity of tangency} $k$ to $\d X$ at $x$, if  $\mathcal L_{\hat v}^{\{j\}} (z) = 0$ for all $j < k$, and $\mathcal L_{\hat v}^{\{k\}} (z) \neq 0$ at $x$ \footnote{this is equivalent to saying that the $(k-1)$-st jet at $x$ of $z|_\g$ vanishes, but the $k$-th jet does not.}. Here $\mathcal L_{\hat v}^{\{j\}} (z)$ denotes the $j^{th}$ iterated $\hat v$-directional derivative of the function $z$. \hfill $\diamondsuit$
\end{definition}

Given a traversally generic vector field $v$ on a compact connected surface $X$, 
we will attach the \emph{combinatorial pattern} $(1,1)$ to a typical $v$-trajectory $\g \subset X$ that corresponds to the edges of the graph $\mathcal T(v)$, the pattern $(121)$ to the trajectories that correspond to the trivalent verticies of $\mathcal T(v)$, and the pattern $(2)$ to the univalent verticies (see Fig. 4).  In fact, the numbers 1 and 2 in these patterns reflect the order of tangency of the curves $\hat\g$ and $\d X$ at the points of $\g \cap \d X$ (see Definition \ref{def1.5}). On a given compact surface $X$, for \emph{traversally generic}  fields $v$ no other patterns (say, like $(1221)$ or $(13)$) occur. In $2D$, this conclusion follows from Definition \ref{def1.5}. 
\smallskip

The lemma below is another way to state this fact. Its proof, relying on the Malgrange Preparation Theorem \cite{Mal}, can be found in \cite{K2}.

\begin{lemma} Let $v$ be a traversally generic field on $X$. Extend $(X, v)$ to a pair $(\hat X, \hat v)$. In the vicinity of each $v$-trajectory $\g$, there exist special local coordinates $(u, x)$ in $\hat X$ and a real polynomial $P(u, x)$ of  degree $2$ or $4$ such that:
\begin{itemize}
\item each $\hat v$-trajectory is given by the equation $\{x = const\}$,   
\item the boundary $\d X$ is given by the polynomial equation $\{P(u, x) = 0\}$,
\item $X$ is given by the polynomial inequality  $\{P(u, x) \leq 0\}$.
\end{itemize}

The polynomial $P(u, x)$ takes  three canonical forms:
\begin{enumerate}
\item $u(u -1)$, which corresponds to the combinatorial pattern $\mathbf{(11)}$,
\item $u^2 - x,$ which corresponds to the combinatorial pattern $\mathbf{(2)}$,
\item $u\big((u -1)^2 + x\big)(u -2),$ which corresponds to the  pattern $\mathbf{(121)}$. \hfill $\diamondsuit$
\end{enumerate}
\end{lemma}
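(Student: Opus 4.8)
The plan is to establish the Lemma by reducing the analysis of a traversally generic field near a trajectory $\g$ to a purely local normal-form problem, and then to enumerate the normal forms that survive the genericity constraints of Definition \ref{def1.4}. First I would fix the extension $(\hat X, \hat v)$ and the auxiliary function $z$ as in \eqref{eq1.6}, and straighten the flow: since $\hat v \neq 0$ in a neighborhood of $\g$, the flow box theorem provides local coordinates $(u, x)$ on $\hat X$ in which $\hat v = \partial_u$, so every $\hat v$-trajectory is a line $\{x = \mathrm{const}\}$ and the boundary $\d X$ is cut out by $\{f(u, x) = 0\}$ for a smooth $f$ with $X = \{f \le 0\}$ locally (replacing $z$ by such an $f$ if necessary, using that $0$ is a regular value). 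The order-of-tangency condition from Definition \ref{def1.5} then says that a trajectory $\{x = x_0\}$ meets $\d X$ with multiplicity $k$ at a point where $f(\cdot, x_0)$ has a zero of order exactly $k$ in $u$. Because $X$ is $2$-dimensional and $\g$ is compact (a closed segment), the intersection $\g \cap \d X$ is finite, and the traversally generic hypothesis forces the total multiplicity along $\g$ to be at most $3$: either two transversal crossings (pattern $(11)$), or a single simple tangency (pattern $(2)$), or two transversal crossings with one simple tangency nested between them (pattern $(121)$); patterns like $(1221)$, $(13)$, or $(4)$ are excluded by clauses (1) and (2) of Definition \ref{def1.4} and the boundary-generic transversality conditions of the earlier Definition.

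The heart of the argument is to upgrade these multiplicity data to the \emph{polynomial} normal forms. For each of the three combinatorial types I would apply the Malgrange Preparation Theorem to $f(u, x)$, viewed as an $x$-parametrized family of functions of $u$ with a zero of the prescribed order at the relevant $u$-values. Malgrange's theorem lets one factor $f$, up to multiplication by a smooth nonvanishing unit, as a Weierstrass-type polynomial in $u$ of degree equal to the vanishing order, with coefficients smooth in $x$; a further smooth change of the $x$-coordinate (and a dilation in $u$) absorbs the lower-order coefficients and the unit, landing on $u(u-1)$ in the $(11)$ case, $u^2 - x$ in the $(2)$ case, and $u\big((u-1)^2 + x\big)(u-2)$ in the $(121)$ case. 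One must check that the changes of coordinates used are compatible with the flow-box structure, i.e.\ that the new $x$ is still constant along $\hat v$-trajectories and the new $u$ is a genuine flow parameter; this is automatic because all the reparametrizations act only on the $x$-variable or rescale $u$ fiberwise, preserving $\hat v = \partial_u$. Finally one verifies that $X = \{P \le 0\}$ and $\d X = \{P = 0\}$ hold on the nose after these normalizations, which for each $P$ is an elementary sign check in the $(u,x)$-plane.

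The main obstacle I anticipate is the $(121)$ case: here the trajectory $\g$ passes through \emph{two} transversal boundary points and \emph{one} simple tangency point, so the relevant function $f(\cdot, x_0)$ has a degree-$4$ zero pattern spread over three distinct values of $u$, and one needs a single normal form valid simultaneously near all three points of $\g \cap \d X$, not merely a germ near one of them. This requires applying the preparation theorem globally along the compact segment $\g$ — effectively a parametrized/relative version — and then gluing the three local polynomial pictures into the one quartic $u\big((u-1)^2+x\big)(u-2)$ by a partition-of-unity-free argument that exploits the fact that between the boundary points the trajectory lies strictly inside $X$ (so $f < 0$ there) and hence no additional factors can appear. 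Checking that the genericity of $v$ rules out the coincidence or collision of these three points, and that the tangency point is the \emph{inner} one with the correct concavity sign (so that the middle factor is $(u-1)^2 + x$ rather than $(u-1)^2 - x$), is where the boundary-generic and traversally-generic hypotheses are genuinely used; I would cite \cite{K2} for the full details of this gluing, as the excerpt itself does.
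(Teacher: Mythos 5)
Your proposal follows the same route the paper points to: the paper does not prove this lemma itself, stating only that the proof ``relies on the Malgrange Preparation Theorem'' and deferring to \cite{K2}, and your flow-box coordinates plus Weierstrass-type preparation, with the semi-global gluing along the compact segment $\g$ in the $(121)$ case, is exactly that strategy. One small correction: the total intersection multiplicity of a $(121)$-trajectory with $\d X$ is $1+2+1=4$ (whence the quartic normal form), not ``at most $3$''; what is bounded by $3$ is the number of points of $\g\cap\d X$, and the reason no extra transversal crossings can occur is that any \emph{interior} point of the segment $\g$ lying on $\d X$ is a local maximum of $z|_{\g}$ and hence an even-order tangency, of which traversal genericity allows at most one.
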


To summarize, at $\d_2X(v)$ the order of tangency is $2$; the trajectories through $\d_2^+X(v)$ have the combinatorial tangency pattern $(121)$, and through $\d_2^-X(v)$ the combinatorial tangency pattern $(2)$. The rest of trajectories have the pattern $(11)$. \smallskip 

We denote by $\Omega^\bullet_{'\langle 1]}$ the partially ordered set whose elements are $(11), (2), (121)$ and the order is defined by $(11) \succ (2)$ and $(11) \succ (121)$. This combinatorics does not look impressive. However, in higher dimensions, traversally generic fields on $(n+1)$-manifolds with boundary generate a  rich and interesting partially ordered finite list $\Omega^\bullet_{'\langle n]}$ of combinatorial tangency patters. The poset $\Omega^\bullet_{'\langle n]}$ is \emph{universal} in each dimension $n+1$. They are discussed in \cite{K3}.

\section{Holography of traversing flows on surfaces}
Let $v$ be a traversing and boundary generic vector field on a compact connected surface $X$ with boundary.  For any point $z \in \d_1^+X(v)$, consider the closest point $w(z) \in \d_1^-X(v)$ that can be reached by moving along the trajectory $\g_z$ through $z$ in the direction of $v$ (see Fig. 7). Note that $w(z) = z$ if and only if $z \in \d_2^-X(v)$. 

The correspondence $z \to w(z)$ defines a map $$C_v: \d_1^+X(v) \to \d_1^-X(v)$$
which we call the \emph{causality map}. It is a distant relative of the classical Poincar\'{e} Return Map. 

Alternatively, one can think of $C_v$ as determining a \emph{partial order} ``$z \prec w(z)$" among the points of the boundary $\d X$.\smallskip

The word ``causality" in the name of $C_v$ is motivated by the following pivotal special case. \smallskip

\begin{figure}[ht]\label{fig1.7}
\centerline{\includegraphics[height=2.7in,width=4in]{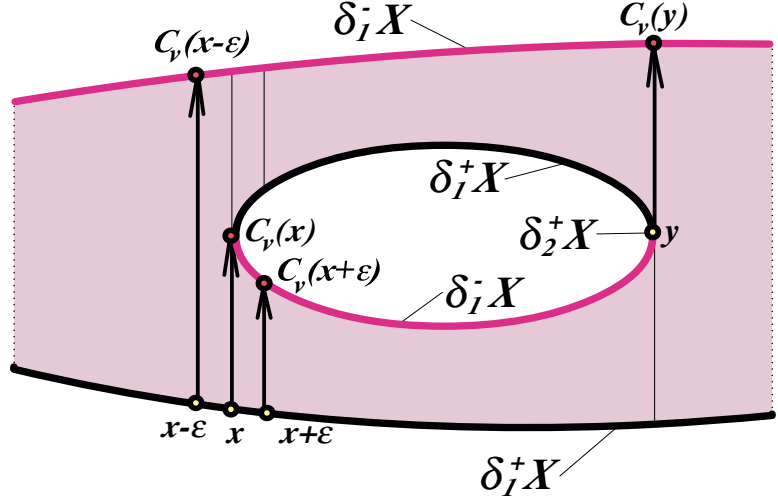}}
\bigskip
\caption{\small{An example of the causality map $C_v: \d_1^+X(v) \to \d_1^-X(v)$.} Note the discontinuity of $C_v$ in the vicinity of $x$. }
\end{figure}

\noindent {\bf Example 8.1. }
Let $w = w(\theta, t)$ be a smooth time-dependent  vector field on the circle $S^1$ (equipped with the angular coordinate $\theta$). It gives rise to a vector field $v = (w, 1)$ on the cylinder $S^1 \times \R$. We think about the factor $S^1$ as  \emph{space} and about the factor $\R$ as \emph{ time} $t$. So we call $S^1 \times \R$ \emph{the space of events}. Note that $v \neq 0$ is a gradient-like field with respect to the time function $T: S^1 \times \R \to \R$.

Pick any smooth compact and connected surface $X \subset S^1 \times \R$. Such a surface has a boundary $\d X$. We call $X$ the  \emph{event domain}, and its boundary $\d X$ the \emph{event horizon}. 

Since the field $v \neq 0$ is traversing in $X$, the map $C_v$ is well-defined.  Then the map $C_v: \d_1^+X(v) \to \d_1^-X(v)$ indeed gives rise the \emph{causality relation}  on the event horizon: the correspondence $C_v$ reflects the \emph{evolution} of an event $z$ into the event $C_v(z)$. 
\hfill $\diamondsuit$
\smallskip

Let $\mathcal C(\d_2^+X(v))$ denotes the union of $v$-trajectories  through the points of the concavity locus $\d_2^+X(v)$. 

The causality map is \emph{discontinuous} at the points of the intersection $\mathcal C(\d_2^+X(v)) \cap \d_1^+X(v)$ (see Fig. 7). On the positive side, the discontinuities of the causality map $C_v$ are not too bad: in a sense, the map has ``left" and ``right" limits. 
\smallskip 

Given a pair $(X, v)$, the $v$-trajectories, viewed as \emph{un}parametrized $v$-oriented curves, produce an oriented $1$-dimensional foliation $\mathcal F(v)$ on $X$.  

\begin{theorem}\label{th7.1}{\bf (The Causal Holography Principle in 2D).} 

Let $(X_1, v_1)$ and $(X_2, v_2)$ be two compact connected surfaces with boundaries,  carrying  traversally generic vector fields $v_1$ and $v_2$, respectively. Assume that there is a diffeomorphism $\Phi^\d : \d X_1 \to \d X_2$ which conjugates the two causality maps:
$$C_{v_2} \circ \Phi^\d = \Phi^\d \circ C_{v_1}.$$

Then $\Phi^\d$ extends to a diffeomorphism $\Phi: X_1 \to X_2$ which maps the oriented foliation $\mathcal F(v_1)$ to the oriented foliation $\mathcal F(v_2)$.
\end{theorem}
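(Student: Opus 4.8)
The plan is to reconstruct $(X,v)$ from the triple $(\partial X, C_v, \mathcal{F}(v)|_{\partial X})$ up to the required diffeomorphism, and then observe that a boundary conjugacy of causality maps carries all this reconstruction data along. First I would build the abstract trajectory space. Each $v$-trajectory $\gamma$ meets $\partial X$ in a finite $v$-ordered set; the causality map $C_v$, together with its iterations on the points of $\partial X$ that lie on $\mathcal{C}(\partial_2^+X(v))$, records exactly how these intersection points are grouped into single trajectories and how many points each trajectory carries (two for the generic pattern $(11)$, three for $(121)$, one for $(2)$). Concretely, a point $z\in\partial_1^+X(v)$ and its image $C_v(z)$ span the ``bulk'' of $\gamma$; the trajectory is exceptional precisely when $z$ or $C_v(z)$ lies on the trajectory of a concavity point, detected by the left/right limit discontinuity of $C_v$. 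Thus from $C_v$ alone one can form the set $\mathcal{T}(v)$ of trajectories as a quotient of $\partial X$, equip it with the structure of a graph with vertices of valence $1$ and $3$ (Section 6), and recover the combinatorial tangency pattern of each trajectory from $\Omega^\bullet_{'\langle 1]}$. Because $\Phi^\d$ conjugates $C_{v_1}$ with $C_{v_2}$, it induces a homeomorphism $\mathcal{T}(v_1)\to\mathcal{T}(v_2)$ of these decorated graphs, compatible with the quotient maps $\partial X_i\to\mathcal{T}(v_i)$.

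Next I would promote this combinatorial picture to a smooth one using the local normal forms of the Lemma in Section 7. Over an open edge of $\mathcal{T}(v)$ the pair $(X,v)$ looks like $P(u,x)=u(u-1)\le 0$ with $v=\partial_u$, i.e. a trivial $[0,1]$-bundle; over a trivalent vertex it is the model $u((u-1)^2+x)(u-2)\le 0$; over a univalent vertex the model $u^2-x\le 0$. These local models are rigid: a diffeomorphism of a neighborhood of $\gamma\cap\partial X$ in $\partial X_1$ that respects the $v_1$-order and the causality relation extends canonically (after the usual smoothing of the corners introduced in Theorem \ref{th1.5}) to a diffeomorphism of the model neighborhood of $\gamma$ in $X_1$ onto the corresponding model in $X_2$, carrying $v_1$-trajectories to $v_2$-trajectories as oriented unparametrized curves. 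I would construct $\Phi$ edge-by-edge and vertex-by-vertex over $\mathcal{T}(v_1)$: on the part of $X_1$ lying over an open edge, $\Phi$ is forced by $\Phi^\d$ on the two boundary arcs together with the product structure of the trajectories; on the part over a vertex it is forced by the normal form. The overlaps are collar neighborhoods of $\partial_1^\pm X$ where both prescriptions agree because both are determined by $\Phi^\d$ and the trajectory-preservation requirement.

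The main obstacle is the gluing/consistency step near the discontinuity locus $\mathcal{C}(\partial_2^+X(v))\cap\partial_1^+X(v)$ and, more subtly, making the edge-wise and vertex-wise definitions of $\Phi$ patch together into a globally smooth diffeomorphism rather than merely a foliation-preserving homeomorphism. The issue is that a trajectory near a concave tangency has a neighbor trajectory that ``wraps around'' the tangency point and spends much longer in $X$; the reparametrization needed to match travel-time data on $X_1$ with that on $X_2$ must be smooth across this transition. I would handle this by working not with the time parametrization but with the foliation $\mathcal{F}(v)$ directly (which is why the theorem asserts only preservation of $\mathcal{F}(v)$, not conjugacy of the flows): in the normal-form coordinates $(u,x)$ the foliation is simply $\{x=\mathrm{const}\}$, so a diffeomorphism matching the boundary data and the $x$-coordinate fibering automatically respects $\mathcal{F}$, and the smoothness across the tangency is guaranteed by the polynomial rigidity of the model $u((u-1)^2+x)(u-2)\le 0$. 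A partition-of-unity argument over $\mathcal{T}(v_1)$, interpolating between the forced collar behavior and the forced vertex behavior, then yields the global smooth $\Phi$; uniqueness of the germ along each $\gamma$ guarantees the interpolation changes nothing essential. Finally, $\Phi|_{\partial X_1}=\Phi^\d$ by construction, and $\Phi_\ast\mathcal{F}(v_1)=\mathcal{F}(v_2)$ edgewise and vertexwise, hence globally, completing the proof.
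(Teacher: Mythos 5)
Your proposal shares the paper's essential first step -- the conjugacy $C_{v_2}\circ\Phi^\d=\Phi^\d\circ C_{v_1}$ induces a homeomorphism $\Phi_{\mathcal T}:\mathcal T(v_1)\to\mathcal T(v_2)$ of trajectory graphs preserving the stratification by tangency type -- but after that you take a genuinely different route. The paper proceeds globally: it chooses a Lyapunov function $f_2$ on $X_2$ with $df_2(v_2)>0$, pulls back its boundary values through $\Phi^\d$ to get $f_1^\d$ on $\d X_1$ satisfying $f_1^\d(z)<f_1^\d(C_{v_1}(z))$, extends $f_1^\d$ to a Lyapunov function $f_1$ on all of $X_1$, and then embeds each $X_i$ into $\mathcal T(v_i)\times\R$ via $z\mapsto(\g_z,f_i(z))$; the diffeomorphism $\Phi$ is read off from $\Phi_{\mathcal T}$ and the leafwise reparametrization $f_1\circ f_2^{-1}$. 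You instead invoke the local polynomial normal forms of Section 7 and build $\Phi$ edge-by-edge and vertex-by-vertex over $\mathcal T(v_1)$, patching. Each approach buys something: the paper isolates all the analytic difficulty into a single clean statement (extendability of $f_1^\d$ to a Lyapunov function), while your version makes explicit exactly where the normal forms enter and why only the foliation, not the parametrized flow, is preserved. One caution: ``a partition-of-unity argument'' is not literally how one glues diffeomorphisms; what works here is that each fiber component of $\Gamma_1:X_1\to\mathcal T(v_1)$ is an interval with endpoints on $\d X_1$ where $\Phi$ is pinned to $\Phi^\d$, so you interpolate the leafwise increasing reparametrizations by convex combination over the base -- and once you phrase it that way you are in effect constructing the paper's function $f_1$ locally and summing it up, so the two arguments converge on the same mechanism. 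Stated at that level of precision, your sketch is at the same standard of rigor as the paper's own (which defers the full proof to the multidimensional treatment in \cite{K4}).
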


\begin{proof} We will only sketch the argument. A fully developed proof of the multidimensional analogue of  this theorem is contained in \cite{K4}. 

First, we notice that since $C_{v_1}$ and $C_{v_2}$ are $\Phi^\d$-conjugate, the diffeomorphism $\Phi^\d$ induces a well-defined continuous map $\Phi_{\mathcal T}: \mathcal T(v_1) \to \mathcal T(v_2)$ of the trajectory spaces. Moreover, $\Phi_{\mathcal T}$ preserves the stratifications of the  two trajectory spaces/graphs  by the combinatorial type of trajectories. That is, the trivalent verticies of  $\mathcal T(v_1)$ are mapped to the trivalent verticies of  $\mathcal T(v_2)$, the univalent verticies are mapped to univalent verticies, and the interior of the edges to the interior of the edges.

Then we pick a smooth function $f_2: X_2 \to \R$ such that $df_2(v_2) > 0$. With the help of $\Phi^\d$, we pull-back $f_2|_{\d X_2}$ to get a smooth function $f_1^\d: \d X_1 \to \R$ such that  $$f_1^\d(z) < f_1^\d(C_{v_1}(z))$$ for all $z \in \d_1^+X_1(v_1)$. 

Then we argue that $f_1^\d$ extends to a smooth function $f_1$ such that $df_1(v_1) > 0$. 

We use $f_1$ to embed $X_1$ in the product  $\mathcal T(v_1) \times \R$ by the formula $$\a_{(v_1, f_1)}(z) =  (\g_z, f_1(z)),$$ where $\g_z$, the $v_1$-trajectory through $z$, is viewed as the point $\Gamma_1(z)$ of the graph $\mathcal T(v_1)$. Similarly, we use $f_2$ to embed the surface $X_2$ in the product  $\mathcal T(v_2) \times \R$ with the help of  the map $\a_{(v_2, f_2)}$.

Finally, we employ $\Phi_{\mathcal T}$, $f_1$ and $f_2$ to construct a map $$\hat \Phi: \mathcal T(v_1) \times \R \to  \mathcal T(v_2) \times \R$$ by the formula $$\hat\Phi(\g, t) = \big(\Phi_{\mathcal T}(\g),\, f_1(f_2^{-1}(t))\big),$$ where $t$ belongs to the $f_2$-image of the trajectory $\Gamma_2^{-1}\big(\Phi_{\mathcal T}(\g)\big)$. 

Crudely, the restriction of $\hat\Phi$ to $\a_{v_1, f_1}(X_1) \subset \mathcal T(v_1) \times \R$ is the desired diffeomorphism $\Phi: X_1 \to X_2$. 

Note that, in general, the pull-back $\Phi^\ast(f_2)$ is not $f_1$; so the parametrizations of  the trajectories are not respected by the diffeomorphism $\Phi$, but the $1$-foliations $\mathcal F(v_1)$ and $\mathcal F(v_2)$ are.
\end{proof}

\begin{corollary}\label{cor7.1} Let $X$ be a compact connected surface with boundary, and $v$ a smooth traversally generic vector field on it. 

Then the knowledge of the causality map $C_v: \d_1^+X(v) \to \d_1^-X(v)$ is sufficient for a reconstruction of the pair $(X, \mathcal F(v))$, up to a diffeomorphism that is constant on $\d X$.  \hfill $\diamondsuit$
\end{corollary}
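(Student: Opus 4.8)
The plan is to deduce Corollary \ref{cor7.1} directly from Theorem \ref{th7.1} by a specialization argument: take the two pairs in the theorem to be two copies of the \emph{same} data. Concretely, suppose we are given $(X, v)$ and the causality map $C_v : \d_1^+X(v) \to \d_1^-X(v)$, and suppose $(X', v')$ is another traversally generic pair together with a diffeomorphism $\psi : \d X \to \d X'$ that conjugates $C_v$ to $C_{v'}$, i.e. $C_{v'} \circ \psi = \psi \circ C_v$. Then Theorem \ref{th7.1}, applied with $(X_1, v_1) = (X, v)$, $(X_2, v_2) = (X', v')$, and $\Phi^\d = \psi$, produces a diffeomorphism $\Phi : X \to X'$ extending $\psi$ and carrying $\mathcal F(v)$ to $\mathcal F(v')$. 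This is exactly the assertion that the abstract data $C_v$ (together with the manifold structure on $\d_1^+X(v), \d_1^-X(v)$ and the identification of the ambient $\d X$) determine $(X, \mathcal F(v))$ up to a diffeomorphism that restricts to $\psi^{-1}\circ(\text{the given boundary identification})$ on $\d X$; in the case $X' = X$ and $\psi = \mathrm{id}$ this says any two surfaces realizing the same causality map on the same boundary are diffeomorphic by a boundary-fixing diffeomorphism respecting the foliations.

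The steps, in order, are: first, interpret ``the knowledge of $C_v$ is sufficient for a reconstruction'' as the uniqueness statement above — that is, make precise that a \emph{reconstruction} means: any other pair $(X', v')$ whose boundary admits a causality-conjugating diffeomorphism with $(\d X, C_v)$ is related to $(X,v)$ by a diffeomorphism constant on $\d X$ (after composing with the given boundary identification). Second, invoke Theorem \ref{th7.1} verbatim on these two pairs to get the extension $\Phi$. Third, note that $\Phi$ maps $\mathcal F(v)$ to $\mathcal F(v')$ by the conclusion of that theorem, and that $\Phi|_{\d X}$ equals the prescribed boundary conjugacy, hence — when the two boundary identifications agree — $\Phi$ is constant on $\d X$. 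Finally, remark that the pair $(\d_1^+X(v), \d_1^-X(v))$ with its embeddings into $\d X$, the concavity locus $\d_2^+X(v)$, and the combinatorial strata of $\mathcal T(v)$ are all recoverable from $C_v$ alone: $\d_1^-X(v)$ is the target, $\d_1^+X(v)$ is the source, the fixed-point set $\{z : C_v(z) = z\} = \d_2^-X(v)$, the discontinuity locus of $C_v$ recovers $\mathcal C(\d_2^+X(v)) \cap \d_1^+X(v)$ and hence $\d_2^+X(v)$ via the ``left/right limit'' structure, so the trajectory graph $\mathcal T(v)$ together with its valency-$1$/valency-$3$ stratification is itself determined by $C_v$.

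The main obstacle — and the reason this is only a corollary rather than a triviality — is that the content is genuinely carried by Theorem \ref{th7.1}, specifically by the interior extension argument: the passage from the boundary data to the function $f_1$ with $df_1(v_1) > 0$ extending the pulled-back $f_1^\d$, and then from the two Lyapunov-type embeddings $\a_{(v_i,f_i)} : X_i \hookrightarrow \mathcal T(v_i)\times\R$ to the global diffeomorphism $\hat\Phi$. In restating this as a reconstruction principle one must be careful that the output diffeomorphism need not preserve parametrizations of trajectories (only the unoriented-but-oriented $1$-foliation $\mathcal F(v)$), which is why the corollary is stated for $(X, \mathcal F(v))$ and not for $(X, v)$ up to conjugacy; the foliation, not the field, is the reconstructible object. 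Beyond this, the only thing to check is the bookkeeping that $\Phi^\d$ can be taken to be the identity on $\d X$ in the self-comparison, which is immediate once one phrases ``reconstruction up to a diffeomorphism constant on $\d X$'' as the statement that the germ of $(X, \mathcal F(v))$ along $\d X$ together with $C_v$ has a unique such filling.
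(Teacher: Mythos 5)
Your proposal is correct and matches the paper's treatment: the corollary is stated there as an immediate specialization of Theorem \ref{th7.1}, obtained exactly as you do by taking the two pairs to be two realizations of the same causality data with $\Phi^\d$ the identity (or the given boundary identification), so that the extended diffeomorphism fixes $\d X$ and carries one foliation to the other. Your closing remarks on what is recoverable from $C_v$ alone and on why only $\mathcal F(v)$ rather than $v$ is reconstructed are consistent with the paper's discussion and add nothing that conflicts with it.
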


The world ``holography" is present in the name of Theorem \ref{th7.1} since the surface $X$ and the $2D$-dynamics of the $v$-flow in it are recorded on two $1$-dimensional screens, $\d_1^+X(v)$ and $\d_1^-X(v)$. \smallskip

Theorem \ref{th7.1} and Corollary \ref{cor7.1} are valid in any dimension (\cite{K4}).\smallskip

\noindent {\bf Example 8.2.}
Let $v$ be a traversally generic field on a connected surface $X$ whose boundary $\d X$ is a \emph{single} loop. Then the boundary $\d X$ is divided into  $q$  disjoint arcs $a_1, \dots , a_q$  that form $\d_1^+X(v)$ and $q$ complementary arcs $b_1, \dots , b_q$ that form $\d_1^-X(v)$. The causality map $$C_v: \coprod_{i=1}^q a_i \to \coprod_{i=1}^q b_i$$ can be represented by its graph $G(C_v) \subset \prod_{i, j} a_i \times b_j$. 

The map $C_v$ (the curve $G(C_v)$) is discontinuous at exactly $c^+(v)$ points in $\coprod_{i=1}^q a_i $ that correspond to the points of the intersection $\mathcal C(\d_2^+X(v)) \cap \d_1^+X(v)$. There the map $C_v$ has distinct  left and  right limits. 

According to the Corollary \ref{cor7.1}, the curve $G(C_v) \subset \prod_{i, j} a_i \times b_j$ determines $X$ and the \emph{un}-parametrized dynamic of the $v$-flow, up to a diffeomorphism $\Phi: X \to X$ that is the identity on $\d X$. Note that the number $q$ alone is not sufficient even to determine the genus of the surface $X$.
\hfill $\diamondsuit$
\smallskip

Revisiting Example 8.2, we get the following interpretation of Corollary \ref{cor7.1}:

\begin{corollary}\label{cor7.2} For any smooth time-dependent vector field $w$ on the circle $S^1$, the causality relation on the event horizon $\d X$ is sufficient for a reconstruction of the event domain $X$ and the un-parametrized dynamics of the $(w, 1)$-flow, up to a diffeomorphism of $X$ that is the  identity on $\d X$. \hfill $\diamondsuit$
\end{corollary}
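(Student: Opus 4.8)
The plan is to recognize that Corollary \ref{cor7.2} is nothing more than the specialization of Corollary \ref{cor7.1} to the setup of Example 8.1, once we check that the hypotheses of the Causal Holography Principle are met in that setup. So the first step is to recall the construction of Example 8.1: given a smooth time-dependent field $w = w(\theta, t)$ on $S^1$, we form the field $v = (w, 1)$ on the space of events $S^1 \times \R$, and we fix a compact connected event domain $X \subset S^1 \times \R$ with event horizon $\d X$. The field $v$ restricted to $X$ is by construction non-vanishing (its second component is $1$) and gradient-like for the time function $T$; hence, by Lemma \ref{5.1}, it is traversing on $X$. Every point of $X$ therefore leaves $X$ in finite forward and backward time, so the causality map $C_v : \d_1^+X(v) \to \d_1^-X(v)$ is well-defined, and it encodes exactly the causality relation $z \prec C_v(z)$ on $\d X$ described in Example 8.1.

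The second step is a genericity remark: Corollary \ref{cor7.1} is stated for traversally \emph{generic} $v$, whereas a given $w$ need not produce a traversally generic $v$ on $X$. By Theorem \ref{th1.4}, the traversally generic fields are dense (indeed open and dense) in $\mathcal V_{\mathsf{trav}}(X)$, so after an arbitrarily small perturbation — either of the domain $X$ inside $S^1 \times \R$, or of the field $w$ itself — we may assume $v = (w,1)$ is traversally generic on $X$ without altering the conclusion we wish to draw. (If one prefers to keep $w$ and $X$ literally fixed, one observes that the reconstruction argument of Theorem \ref{th7.1} only uses that the tangency patterns along $\d X$ are the generic ones $(11), (2), (121)$; but invoking Theorem \ref{th1.4} is cleaner.)

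The third and final step is to apply Corollary \ref{cor7.1} verbatim: the data of the causality map $C_v : \d_1^+X(v) \to \d_1^-X(v)$ — equivalently, the causality relation on the event horizon $\d X$ — determines the pair $(X, \mathcal F(v))$ up to a diffeomorphism of $X$ that restricts to the identity on $\d X$. Since $\mathcal F(v)$ is precisely the oriented $1$-foliation of $X$ by the unparametrized $(w,1)$-trajectories, this is exactly the assertion of Corollary \ref{cor7.2}. There is no real obstacle here; the only point requiring a moment's care is the matching of hypotheses — confirming that $(w,1)$ is genuinely traversing (done via Lemma \ref{5.1}) and that the traversally generic hypothesis of Corollary \ref{cor7.1} costs us nothing (done via Theorem \ref{th1.4}). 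One should also note explicitly, as Example 8.2 already warns, that the causality relation does reconstruct $X$ with its foliation but does \emph{not} remember the original time parametrization $t$ of the trajectories, which is why the reconstruction is only up to a boundary-fixing diffeomorphism rather than up to conjugacy of parametrized flows.
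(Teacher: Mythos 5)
Your proposal is correct and follows essentially the same route as the paper, which presents Corollary \ref{cor7.2} simply as the specialization of Corollary \ref{cor7.1} (hence of Theorem \ref{th7.1}) to the event-domain setup of Examples 8.1 and 8.2. Your explicit handling of the traversally generic hypothesis via Theorem \ref{th1.4} addresses a point the paper leaves implicit, but it does not change the argument's structure.
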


The theory of billiards on Riemmanian surfaces $X$ with boundary benefits from applying the $3D$-version of the Causal Holography to the geodesic flow on the $3$-fold $SX$, the space of unit tangent vectors on $X$. See \cite{K4} for some of these applications. In addition to geodesic billiards, they include the classic inverse geodesic scattering problems. 

\section{Convex quasi-envelops and characteristic classes of traversing flows on orientable surfaces}

Traversing flows have interesting \emph{characteristic classes}---elements of certain cohomology---associated with them. In dimension two, they are quite primitive, but for high-dimensional flows, surprisingly rich (see \cite{K6}). \smallskip

We have seen that the traversally generic flows exhibit a very particular combinatorial patterns of tangency to the boundary $\d X$. In particular, for generic $2D$-flows, no tangencies of orders $\geq 3$ occur. \smallskip

There is a nice link between this behavior and the \emph{spaces of smooth functions} $f: \R \to \R$ or even polynomials that have no zeros of multiplicities $\geq 3$. 
To explain the connection, we will need the following definition/construction. \smallskip

Let  $\mathcal F$ denote the space (in the $C^\infty$-topology) of smooths functions $f: \R \to \R$ which are identically $1$ outside of a compact set. Let $\mathcal F_{\leq 2}$ be its subspace, formed by functions  that have zeros only of multiplicity $\leq 2$. 

Such spaces of functions with ``\emph{moderate singularities}" have been studied in depth by V. I. Arnold \cite{Ar} and V. A. Vassiliev \cite{V}. In $2D$,  we  employ just a tiny portion of their results. The main theorem of Arnold-Vassiliev  describes the weak homotopy/homology types of the spaces $\mathcal F_{\leq k}$ for all $k \geq 2$. In particular, the homology of the space  $\mathcal F_{\leq 2}$ is isomorphic to the homology of $\Omega S^2$, the space of loops on a $2$-sphere (\cite{V})!  Arnold proved also that the fundamental group $\pi_1(\mathcal F_{\leq 2}) \approx \Z$ \cite{Ar}. 
\smallskip

For an even non-negative integer $d$, we  will also explore the subspaces $\mathcal F^d_{\leq 2} \subset \mathcal F_{\leq 2}$, formed by functions whose \emph{degree}---the sum of multiplicities of all its zeros---is even and does not exceed $d$.
\smallskip

Let $\hat v$ be a boundary convex traversing vector field on an annulus $A$. With the help of $\hat v$, we can introduce a product structure $A \approx S^1 \times [0, 1]$ so that the fibers of the projection $A \to S^1$ are the $\hat v$-trajectories. 

\begin{definition}\label{def1.6} Consider a collection $L$ of several smooth immersed loops in the annulus $A$ which intersect and self-intersect transversally and do not have triple intersections.  

We say that a boundary convex traversing vector field $\hat v$ is \emph{generic with respect to} $L$, if no $\hat v$-trajectory $\g$ contains more than one point of self-intersection from $L$ and no more than one point of simple tangency to $L$, but not both. 
 \hfill $\diamondsuit$
\end{definition}

For a given $L$, by standard techniques of the singularity theory,  we can find a perturbation of $\hat v$ within the space $\mathcal V_{\mathsf{trav}}(A)$ so that the perturbed field is generic with respect to $L$. \smallskip

Since an \emph{immersion} is a smooth map of manifolds, whose differential has the trivial kernel,  the immersions allow for a \emph{transfer} of a given vector field on the target manifold to a vector field in the source manifold. The transfer of a non-vanishing field is a non-vanishing field. \smallskip

All surfaces in this section are orientable. Note that any orientable surface $X$ admits an \emph{immersion} $\a: X \to A$ (or even in the plane $\R^2$) (see Fig. 8). We will use this fact to pull-back non-vanishing  fields on the target space $A$ to $X$. 

\begin{definition}\label{def1.7} Consider an \emph{immersion} $\a: X \to A$ of a  given compact orientable surface  $X$ into an annulus $A$, equipped with a traversal boundary convex (``radial") field $\hat v$. We call such $\a$ \emph{generic relative to} $\hat v$, if $\hat v$ is  generic with respect to the curves $\a(\d X)$ in the sense of Definition 9.1. 
\smallskip

Given a transversally generic field $v$ on a connected compact surface $X$, we call a map $\a: (X, v) \to (A, \hat v)$ a \emph{convex quasi-envelop of} $(X, v)$ 
 if there exists  an immersion $\a: X \to A$  which  is generic relative to the radial field $\hat v$ on $A$, and $v = \a^\ast(\hat v)$, the pull-back of $\hat v$. \hfill $\diamondsuit$
\end{definition}

\begin{figure}[ht]\label{fig1.8}
\centerline{\includegraphics[height=2.8in,width=3.4in]{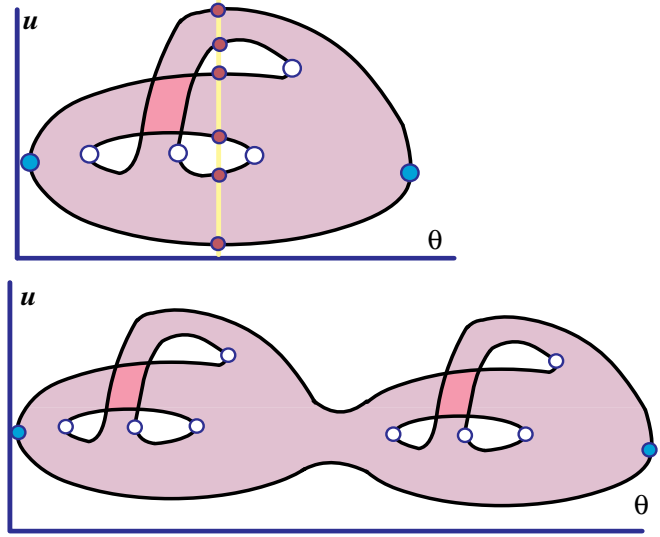}}
\bigskip
\caption{\small{A convex quasi-envelop $\a: X \to A$ of a traversally generic  field $\a^\ast(\d_u)$ on a punctured torus $X$ (on the top) and on a punctured surface $X$ of genus $2$ (on the bottom). In both examples, the cardinality of the $\theta$-fibers $\leq 6$.}}
\end{figure}




 


Given a boundary generic relative to $\hat v$ immersion $\a: X \to A$, the $\a$-pullback (transfer) of the field $\hat v$ defines a vector field $v \neq 0$ on $X$.  Since $\a$ is an immersion, evidently the pull-back $v$ is traversing on $X$. Moreover, $v$ is taversally generic in the sense of Definition \ref{def1.4}, since no $v$-trajectory $\g$ has more than one point of simple tangency to $\d X$. 

\begin{definition}\label{def1.8} Let $\a: X \to A$ be a \emph{regular embedding}  of a  given compact surface  $X$ into an annulus $A$, carrying a traversal boundary convex field $\hat v$. We denote by $v$ the pull-back of $\hat v$ under $\a$. If $\a$ is traversally generic relative to $\hat v$, then we say that  the pair $(A, \hat v)$ is a \emph{convex envelop} of $(X, v)$.

 \hfill $\diamondsuit$
\end{definition}

The existence of a convex envelop puts significant restrictions of the topology of $X$: such orientable surfaces $X$ do not have $1$-handles. In other words, they are disks with holes. 

\begin{lemma} If a compact connected surface $X$ with boundary has a pair of loops whose transversal intersection is a singleton, then no traversal flow on $X$ admits a convex envelop. In other words, if a connected surface $X$ with boundary has a handle, then no traversal flow on $X$ can be  convexly enveloped. 
\end{lemma}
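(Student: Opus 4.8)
The plan is to argue by contradiction: suppose $(A,\hat v)$ is a convex envelop of $(X,v)$, so that $X$ embeds as a submanifold $\a(X)\subset A$ and $v=\a^\ast(\hat v)$, where $\hat v$ is the radial boundary convex traversing field giving a product structure $A\approx S^1\times[0,1]$. I want to show that $X$ can then have no $1$-handle, equivalently that any two loops in $X$ can be pushed off each other, equivalently that $H_1$ of the surface is ``planar'' in a way incompatible with a pair of loops meeting once.

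First I would use the product structure to produce a map $\theta\colon X\to S^1$, namely the composition of $\a$ with the projection $A\to S^1$; since $\a$ is an embedding and $\hat v$ is transverse to the $S^1$-factor except along the convex tangency locus, the restriction of $\theta$ to each $v$-trajectory is a homeomorphism onto a point or onto its image arc, so the fibers of $\theta$ are precisely (unions of) $v$-trajectories. The key point is that $\a(X)$ sits inside the annulus $S^1\times[0,1]$, which is a planar surface; a regularly embedded compact surface in a planar surface is itself planar, hence is a disk with holes and has no $1$-handle. The cleanest way to see ``embedded in a planar surface $\Rightarrow$ planar'' is via the Jordan curve theorem for surfaces / Alexander duality: any simple closed curve in the annulus separates it, so $H_1$ of any subsurface injects into $H_1$ of the annulus modulo relations coming from boundary, and in particular the intersection form on $H_1(X,\d X)$ vanishes. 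A pair of loops in $X$ meeting transversally in a single point would have intersection number $\pm1$, contradicting the vanishing of the intersection form on a planar surface. This is the heart of the argument, and I would phrase it as: a compact surface containing a $1$-handle has a pair of curves with intersection number $1$, which cannot embed in the annulus because the annulus has trivial (symplectic) intersection form on relative homology.

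Concretely, the main obstacle is making the ``embedded in an annulus forces planarity'' step rigorous without hand-waving: one has to be careful that $\a$ is a genuine embedding of the \emph{closed} surface $X$ (not just an immersion as in the quasi-envelop case) and that the algebraic intersection number is a homotopy/embedding invariant that lives in $H_1(A)\cong\Z$ correctly. I would handle this by observing that for disjoint properly embedded arcs or for a pair of transverse loops $\alpha,\beta\subset X$ with $\alpha\cdot\beta=1$, their images $\a(\alpha),\a(\beta)$ are transverse loops in $A$ with the same algebraic intersection number (algebraic intersection is preserved under embeddings of surfaces), but any two loops in the annulus $A$ are homologous to multiples of the core circle, so their intersection form is identically zero — contradiction. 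Thus $X$ has no such pair of loops, i.e.\ no handle, completing the proof. An alternative, more hands-on route avoiding homology: use $\theta\colon X\to S^1$ together with the function $f$ supplied by the radial field (the $[0,1]$-coordinate) to embed $X$ in $S^1\times\R$ compatibly with the flow, then invoke the classification of subsurfaces of $S^1\times\R$ — but the homological argument is shorter and I would present that one.
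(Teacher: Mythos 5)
Your proof is correct and takes essentially the same route as the paper's: a convex envelop places $X$ inside an annulus (the paper deduces this from Lemma \ref{lem1.2}), a planar surface, on which the intersection form vanishes, so no two loops of $X$ can meet transversally in a single point. The opening discussion of the projection $\theta$ and its fibers is superfluous to this homological punchline.
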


\begin{proof} By Lemma 1.2, the space $\hat X$ of a convex envelop is either a disk or an annulus, both surfaces residing in the plane. No two loops in the plane intersect transversally at a singleton. Thus, for surfaces with a handle, no convex envelops exist.
\end{proof}

So the existence of a convex envelop severally restricts the topology of surface $X$. To incorporate surfaces with handles into our constructions, we have introduced the notion of a convex quasi-envelop (Definition \ref{def1.7}). \smallskip

Now we are in position to explore a connection between immersions $\a: (X, v) \subset (A, \hat v)$ of a given surface $X$ in the annulus $A$, such that $v = \a^\ast(\hat v)$ and $\hat v$ is generic with respect to $\a(\d X)$ on one hand, and loops in the functional spaces $\mathcal F_{\leq 2}$ on the other.\smallskip

Let $\a(\d X)^\times$ denote the set of self-intersections of  the curves forming the image $\a(\d X)$. Let $\a(\d X)^\circ$ denote the set $\a(\d X) \setminus \a(\d X)^\times$. \smallskip

With the pattern $\a(\d X)$ we associate an auxiliary smooth function $z_\a: A \to \R$, subject to the following properties:
\begin{itemize}
\item $z_\a^{-1}(0) = \a(\d X)$,
\item $0$ is the regular value of $z_\a$ at the points of $\a(\d X)^\circ$, 
\item in the vicinity of each point $a \in \a(\d X)^\times$, consider local coordinates $(x_1, x_2)$ such that $\{x_1 =0\}$ and $\{x_2 =0\}$ define the two intersecting branches of  $\a(\d X)$; then locally  $z_\a = c\cdot x_1x_2$, where the constant $c \neq 0$.
\item $z_\a =1$ in the vicinity of $\d A$,
\item the sign of $z_\a$ changes to opposite as a path crosses an arc from $\a(\d X)^\circ$ transversally\footnote{Thus the sign of $z_\a$ provides a ``checker board" coloring of the domains in $A \setminus \a(\d X)$.}.
\end{itemize}
\begin{eqnarray}\label{eq1.7}
\end{eqnarray}

Here we denote by $A^\circ$ the interior of the annulus $A$. Let $\phi: A^\circ \to \R$ be a smooth function so that $d\phi(\hat v) > 0$ in $A^\circ$ and $\phi(\hat\g \cap  A^\circ) = \R$ for all $\hat v$--trajectories $\hat\g$ in $A$. Then, with the help of $z_\a$ and $\phi$, we get a map $J_{z_\a}: \mathcal T(\hat v) \to \mathcal F_{\leq 2}$ whose target is the space of smooth functions $f: \R \to \R$ with no zeros of multiplicity $\geq  3$ and that are identically $1$ outside of a compact set in $\R$. We define the map $J_{z_\a}$ by the formula 
\begin{eqnarray}\label{eq1.8}
J_{z_\a}(\hat \g) = (z_\a|_{\hat \g}) \circ (\phi |_{\hat\g})^{-1},
\end{eqnarray}
where, abusing notations, $\hat \g$ stands for  both a $\hat v$-trajectory in $A$ and for the corresponding point in the trajectory space $\mathcal T(\hat v) \approx S^1$.

For a fixed $\a$, it is easy to check that the homotopy class $[J_{z_\a}]$ of $J_{z_\a}$ does not depend on the choice of the auxiliary function $z_\a$, subject to the five properties in (\ref{eq1.7}) (the space of such $z_\a$'s is convex and thus contractible). \smallskip

We pick a generator $\kappa \in \pi_1(\mathcal F_{\leq 2}) \approx \Z$ (see \cite{Ar}) and define the integer $J^\a$ by the formula $J^\a \cdot \kappa = [J_{z_\a}]$. As a result, any immersion $\a: X \to A$, which is generic with respect to $\hat v$,  produces a homotopy class $[J_{z_\a}] \in \pi_1(\mathcal F_{\leq 2})$ and an integer $J^\a$.

The isomorphism $\pi_1(\mathcal F_{\leq 2}) \approx \Z$ follows from the work of V. I. Arnold \cite{Ar} by a slight modification of his arguments, which we will describe next (see Theorem \ref{th1.7}). The main difference between our constructions and the ones from \cite{Ar} is that Arnold uses the critical loci of functions from $\mathcal F_{\leq 2}$, while we are using the zero loci.\smallskip

Generic loops in $\b: S^1 \to \mathcal F_{\leq 2}$ have an interpretation in terms of finite collections $C$ of smooth closed curves in the annulus $A$ with \emph{no inflection points with respect to  their tangent lines of the form} $\{\theta = const\}$ in the $(u, \theta)$-coordinates. We call such tangent lines $\theta$-\emph{vertical}. Furthermore, the generic homotopy between such loops $\b$ correspond to some \emph{cobordism} relation between the corresponding plane curves, the cobordism also avoids the $\theta$-vertical inflections.


First, let us spell out the genericity requirements on the collections $C$ of closed curves in the annulus $A$:
\begin{enumerate}
\item $C \subset A$ is a finite collectionof closed  smooth \emph{immersed} curves $\{C_j\}_j$,
\item the projections $\{\theta: C_j \to S^1\}_j$ have  Morse type singularities only\footnote{This excludes the $\theta$-vertical inflections.}, 
\item the self-intersections and mutual intersections of the curves  $\{C_j\}_j$ are transversal and no triple intersections are permited, 
\item at each double intersection, the two banches of $C$ are not parallel to the $u$-coordinate, 
\item  the $\theta$-images of the intersections and of the critical values of $\{\theta: C_j \to S^1\}_j$ are all distinct in $S^1$, 
\item the cardinality of each fiber of $\theta: C \to S^1$ does not exceed a given natural number $d$.
\end{enumerate}
\begin{eqnarray}\label{eq1.9}
\end{eqnarray}

\begin{definition}\label{def1.9}
Given two collections $C_0$ and $C_1$ of immersed closed curves as in (\ref{eq1.9}), we say that they are \emph{cobordant with no $\theta$-vertical inflections}, if there is a smooth 
function $F: A \times [0, 1] \to \R$  such that:
\begin{itemize}
\item $0$ is a regular value of $F$,
\item the restriction of the projection $T: A \times [0, 1] \to [0, 1]$ to the zero set $W =_{\mathsf{def}} F^{-1}(0)$ is a Morse function,
\item $C_0 = W \cap (A \times \{0\})$ and $C_1 = W \cap (A \times \{1\})$,
\item for each $t \in [0, 1]$, 
the section $C_t  =_{\mathsf{def}} W \cap (A \times \{t\})$ is such that $C_t$ has no $\theta$-horizontal inflections\footnote{Note that the second bullet excludes the  triple intersections of $C_t$.} 
\item for each $t \in [0, 1]$  the cardinality of the fibers of $\theta: C_t \to S^1$ does not exceed a given natural number $d$.
\hfill $\diamondsuit$
\end{itemize} 
\end{definition}

It is possible to verify that the cobordism with no $\theta$-vertical inflections is an \emph{equivalence relation} among collections of curves as in (\ref{eq1.9}). Indeed, if $C$ is cobordant to $C'$ with the help of $F$, and $C'$ to $C''$ with the help of $F'$, then there exists a piecewise smooth function $F \cup F': A \times [0, 2] \to \R$ whose restriction to $A \times [0, 1]$ is $F$ and to $A \times [1, 2]$ is a $(+1)$-shift of $F'$. Smoothing $F \cup F'$ along $A \times \{1\}$ in the normal direction  and scaling down the interval $[0, 2]$ to $[0,1]$, produces the desired function-cobordism  $F \ast F':  A \times [0, 1] \to \R$. 

So we can talk about \emph{the set of bordisms} $\mathbf B_{\mathsf{no\, \theta-inflect.}}$, based on collections of closed curves in the annulus with no $\theta$-vertical inflections.    
This set is a \emph{group}: the operation $C, C' \Rightarrow C \ast C'$ is defined by the union $\tilde C \cup \tilde C' \subset A$, where $\tilde C \subset S^1 \times (0, 0.5)$ and $\tilde C' \subset S^1 \times (0.5, 1)$ are the images of $C$ and $C'$, scaled down in the $u$-direction by the factor $0.5$ and placed in sub-annuli of $A = S^1 \times [0, 1]$. The role of $-C$ is played by the mirror image of $C$ with respect to a vertical (equivalently, horizontal) line, a fiber of $\theta: A \to S^1$.

Note that this operation $\ast$ may affect the maximal cardinalities $d$ and $d'$ of the fibers $\theta: C \to S^1$ and $\theta: C' \to S^1$ in a somewhat unpredictable way. In any case,  the fiber cardinality of $\theta: C\ast C' \to S^1$ has the upper boundary $d + d'$.\smallskip

The previous constructions deliver the following proposition, a slight modification of Theorem from \cite{Ar}.

\begin{theorem}\label{th1.7} The fundamental group $\pi_1(\mathcal F_{\leq 2})$ is isomorphic to the bordism group \hfill\break $\mathbf B_{\mathsf{no\, \theta-inflect.}}$, based on finite collections of immersed loops with no $\theta$-vertical inflections in the annulus $A$ and subject to the constraints (\ref{eq1.9}). The isomorphism is induced by the correspondence $$K: \{\b: S^1 \to \mathcal F_{\leq 2}\} \Rightarrow \{\b(\theta)^{-1}(0)\}_{\theta \in [0, 2\pi]} \subset A.$$ \hfill $\diamondsuit$
\end{theorem}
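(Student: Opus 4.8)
The plan is to exhibit the correspondence $K$ and its inverse at the level of homotopy classes, and check both are well-defined group homomorphisms. First I would fix, once and for all, the auxiliary data on the annulus: the product structure $A \approx S^1 \times [0,1]$ coming from the boundary convex field $\hat v$, the trajectory space $\mathcal T(\hat v) \approx S^1$ (coordinate $\theta$), and a fibrewise coordinate $\phi$ on each trajectory with $d\phi(\hat v) > 0$ and $\phi(\hat\gamma \cap A^\circ) = \R$. Given a loop $\b : S^1 \to \mathcal F_{\leq 2}$, after a small homotopy I may assume $\b$ is generic; then the total zero set $\{(\theta, \phi) : \b(\theta)(\phi) = 0\}$, transported to $A$ via $\phi$, is a finite collection $C$ of immersed closed curves, and the genericity conditions (1)--(6) of (\ref{eq1.9}) translate directly into the Morse/transversality conditions on $\b$ defining a generic loop; the constraint that $\b(\theta) \in \mathcal F_{\leq 2}$ (no zeros of multiplicity $\geq 3$) is exactly the absence of $\theta$-vertical inflections of $C$. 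This is essentially the construction $J_{z_\a}$ of (\ref{eq1.8}) read backwards: a collection of curves $C$, together with a choice of sign function $z$ as in (\ref{eq1.7}), yields the loop $\theta \mapsto (z|_{\hat\gamma_\theta}) \circ (\phi|_{\hat\gamma_\theta})^{-1}$, and the two-sidedness of the complement of $C$ guarantees such a $z$ exists and is unique up to the contractible choices already noted.

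Next I would verify that $K$ descends to a map on the respective equivalence classes. On the source side, a homotopy of loops $\b_s$, made generic, produces a zero set $W = F^{-1}(0)$ in $A \times [0,1]$ with $F(\theta, \phi, s) = \b_s(\theta)(\phi)$; the condition that each $\b_s$ lands in $\mathcal F_{\leq 2}$ is precisely the ``no $\theta$-vertical inflection'' condition on each slice $C_t$ in Definition \ref{def1.9}, and $0$ being a regular value of $F$ with $T|_W$ Morse comes from genericity of the homotopy. Conversely a cobordism $F$ as in Definition \ref{def1.9}, fed through the same dictionary, yields a homotopy of loops in $\mathcal F_{\leq 2}$. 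So $K$ gives a well-defined bijection between $\pi_0$ of generic loops modulo generic homotopy — which is $\pi_1(\mathcal F_{\leq 2})$ since generic loops and homotopies are dense — and $\mathbf B_{\mathsf{no\,\theta-inflect.}}$. Then I would check $K$ is a homomorphism: concatenation of loops in $\mathcal F_{\leq 2}$, where the second loop is supported in a later $\phi$-window, corresponds under $\phi$ to placing the two curve-collections in disjoint sub-annuli in the $u$-direction, which is exactly the group operation $\ast$; and the inverse loop $\b(-\theta)$ corresponds to the mirror image of $C$ across a $\theta$-fibre, which is exactly $-C$.

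The main obstacle I anticipate is the bookkeeping around the sign function $z_\a$ (equivalently, the checkerboard two-coloring) and its behavior under cobordism. On a single slice a collection $C$ of transversally (self-)intersecting immersed loops in $A$ need not bound, so the existence of a global sign function with $z = c\, x_1 x_2$ near double points requires an evenness/mod-$2$ argument on how $\theta$-vertical tangencies and double points are distributed; this is the content of Arnold's analysis and is where his computation of $\pi_1(\mathcal F_{\leq 2}) \approx \Z$ actually enters — the $\Z$ records, roughly, the signed count of $\theta$-vertical tangency events around the loop. Keeping this coherent across a cobordism (where tangency pairs are created and annihilated and a double point can pass through a tangency) is the delicate step, and I would handle it by checking that each local modification of $C_t$ permitted by the Morse condition on $T|_W$ preserves the global sign structure, referring to \cite{Ar} for the cases that already appear there and supplying the short extra verification needed because we use zero loci rather than critical loci. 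The remaining points — density of generic loops, independence of $[J_{z_\a}]$ from the choices, compatibility with the constraint $d$ — are routine transversality and are either already noted in the text or follow from standard singularity theory.
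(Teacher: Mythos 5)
Your proposal is correct and takes essentially the same route as the paper, which itself offers no separate proof but asserts that ``the previous constructions deliver'' the statement as a modification of Arnold's theorem: namely the zero-set dictionary between generic loops in $\mathcal F_{\leq 2}$ and curve collections satisfying (\ref{eq1.9}), the homotopy/cobordism correspondence, and the matching of group operations via stacking in sub-annuli and mirroring. The sign-function subtlety you flag is genuine but resolves exactly as you indicate (each $\b(\theta)$ is $\equiv 1$ near $\partial A$, forcing even fiber cardinality and hence the checkerboard coloring), and the actual computation $\pi_1(\mathcal F_{\leq 2})\approx\Z$ is deferred, in the paper as in your sketch, to the surgery algorithm carried out in the proof of Theorem \ref{th1.8} and to \cite{Ar}.
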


This theorem is a foundation of a \emph{graphic calculus} that converts homotopies of loops in the functional space $\mathcal F_{\leq 2}$ into cobordisms  of closed loop patterns in the annulus $A$ with no $\theta$-vertical inflections.

Figures 10 - 14 show an application of this calculus. They explain why any loop in $\mathcal F_{\leq 2}$ is homotopic to an integral multiple of a generator $\kappa \in \pi_1(\mathcal F_{\leq 2})$, represented by a model loop pattern $K \subset A$ as in Fig. 9, diagram (a) or (b). 
\smallskip

We orient the annulus $A = S^1 \times [0, 1]$ so that the the $\theta$-coordinate,  corresponding to $S^1$, is the first, and the $u$-coordinate, corresponding to $[0, 1]$, is the second. 

We fix an orientation of $X$, thus picking orientations for each component of $\d X$. Given an orientation-preserving immersion $\a: (X, v) \subset (A, \hat v)$ such that $\a(\d X)$ has  the properties as in (\ref{eq1.9}), we notice that the polarity of $a \in \d_2^+X(v)$ is $\oplus$ if and only if $\a_\ast(\nu_a)$, where $\nu_a$ is the inner normal to $\d X$ at $a$,  points in the direction of $\theta$. Otherwise, the polarity of $a$ is $\ominus$ (see Fig. 9).
\smallskip


\begin{theorem}\label{th1.8} Any orientation-preserving immersion $\a: (X, v) \subset (A, \hat v)$ such that $\hat v$ is generic with respect to $\a(\d X)$\footnote{for any convex quasi-envelop $\a$ of $(X, v)$}  produces a map $J_{z_\a} : S^1 \to \mathcal F_{\leq 2}$ (see (\ref{eq1.8})). Its homotopy class  $[J_{z_\a}] = J^\a \cdot \kappa$, where $\kappa$ denots a generator of $\pi_1( \mathcal F_{\leq 2}) \approx \Z$. 

The integer $J^\a$ can be computed by the formula:
\[J^\a =  \#\{\d_2^{+, \oplus}X(v)\} - \#\{\d_2^{+, \ominus}X(v)\}
\]
and thus does not depend on $\a$ (as long as the transfer $\a^\ast(\hat v) = v$).

Moreover, $|J^\a|  \leq c_2^+(v)$, the complexity of the $v$-flow.\smallskip



\end{theorem}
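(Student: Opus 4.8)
The plan is to compute the homotopy class $[J_{z_\a}] \in \pi_1(\mathcal F_{\leq 2})$ by tracking, as $\theta$ traverses $S^1$, the birth/death events of zeros of the one-parameter family of functions $f_\theta = J_{z_\a}(\hat\g_\theta)$. Under the isomorphism of Theorem \ref{th1.7}, $[J_{z_\a}]$ is represented by the loop pattern $\{f_\theta^{-1}(0)\}_{\theta}$, which is exactly $\a(\d X) \subset A$ reparametrized by $\phi$; so the generator-count $J^\a$ is a bordism invariant of this pattern. The key point is that a double point of $\a(\d X)$ that is \emph{not} $u$-parallel (condition (4) of (\ref{eq1.9})) contributes nothing to the count, while the two types of $\theta$-vertical tangencies of $\a(\d X)$ — the Morse maxima/minima of $\theta: \a(\d X) \to S^1$ — are precisely where zeros of $f_\theta$ are born or die in pairs. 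Each such tangency is a point of $\a(\d X)$ where the radial trajectory $\hat\g$ is simply tangent to $\a(\d X)$; pulling back, these are exactly the points of $\d_2 X(v)$, with $\d_2^+X(v)$ (concave, pattern $(121)$) giving a death-then-rebirth that registers on $\pi_1$ and $\d_2^-X(v)$ (convex, pattern $(2)$) giving a birth/death that is locally trivial. The orientation data $\oplus/\ominus$ records the sign of this contribution, via whether $\a_\ast(\nu_a)$ points along or against $+\theta$; summing gives $J^\a = \#\{\d_2^{+,\oplus}X(v)\} - \#\{\d_2^{+,\ominus}X(v)\}$.

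Concretely, the steps are as follows. First I would reduce the computation of $[J_{z_\a}]$ to the bordism class of $\a(\d X)$ in $\mathbf B_{\mathsf{no\,\theta\text{-}inflect.}}$ using Theorem \ref{th1.7}, checking that the auxiliary $z_\a$ of (\ref{eq1.7}) realizes the correspondence $K$ up to homotopy (independence of $z_\a$ was already noted in the text). Second, I would invoke the graphic calculus advertised after Theorem \ref{th1.7} (Figures 10–14) to put $\a(\d X)$, up to cobordism with no $\theta$-vertical inflections, into a standard form: a disjoint union of $|J^\a|$ copies of the model generator $\kappa$ of Fig. 9, plus null-cobordant pieces. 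The crossing-cancellation moves show that transversal double points in generic position do not affect the class; the only surviving invariant is a signed count of $\theta$-extrema of a certain type. Third, I would match this signed count of $\theta$-extrema on the $A$-side with the loci $\d_2^{\pm}X(v)$ on the $X$-side: a $\theta$-maximum or minimum of $\a(\d X)$ pulls back under the immersion $\a$ to a point where $v=\a^\ast(\hat v)$ is tangent to $\d X$, i.e. a point of $\d_2 X(v)$, and the concave/convex dichotomy (pattern $(121)$ versus $(2)$, Lemma in Section 7) distinguishes which extrema are "essential". Fourth, I would pin down the sign: by the definition of $\oplus/\ominus$ recalled just before the theorem, $a \in \d_2^{+,\oplus}X(v)$ exactly when $\a_\ast(\nu_a)$ points in the $+\theta$ direction, which is exactly the sign with which that extremum of $\a(\d X)$ contributes to the winding in $\pi_1(\mathcal F_{\leq 2})$. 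This yields the displayed formula, and $\a$-independence is then automatic since the right-hand side is intrinsic to $(X,v)$. Finally, the inequality $|J^\a| \leq c_2^+(v) = \#\{\d_2^+X(v)\}$ is immediate: $J^\a$ is a difference of the cardinalities of the two parts of the partition $\d_2^+X(v) = \d_2^{+,\oplus}X(v) \coprod \d_2^{+,\ominus}X(v)$, so its absolute value is at most the total $\#\{\d_2^+X(v)\}$.

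The main obstacle I anticipate is Step 3 — the precise bookkeeping that a $\theta$-extremum of $\a(\d X)$ contributes to $[J_{z_\a}]$ \emph{if and only if} the corresponding tangency point of $v$ to $\d X$ lies in $\d_2^+X(v)$ rather than $\d_2^-X(v)$. One has to analyze the local model of $f_\theta$ near such a point: at a convex tangency ($\d_2^-$, local boundary model $u^2 - x$), the zero set of $z_\a$ near the relevant trajectory is a single parabola opening "away" from $A$, so as $\theta$ passes the critical value a single pair of real zeros of $f_\theta$ appears and then the function is $\equiv 1$ on one side — a locally null-homotopic excursion. At a concave tangency ($\d_2^+$, local model $u((u-1)^2 + x)(u-2)$), two of the four strands of $\d X$ meeting the nearby trajectories come together, so $f_\theta$ loses a pair of zeros (the middle double zero of multiplicity $2$) and the function's zero set drops from four points to two; tracking how the "outer" pair of zeros of $f_\theta$ move past this collision shows the loop in $\mathcal F_{\leq 2}$ cannot be contracted past it, and the direction of the contribution is governed by on which side ($\pm\theta$) the concavity opens, i.e. by the polarity of $\nu_a$. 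Making this local picture rigorous — essentially a careful reading of the three canonical polynomials of the Lemma in Section 7 through the lens of Arnold's generator of $\pi_1(\mathcal F_{\leq 2})$ — is the technical heart; everything else is assembling bordism invariants and comparing two ways of counting the same finite set of points.
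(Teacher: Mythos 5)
Your proposal follows essentially the same route as the paper: reduce $[J_{z_\a}]$ to the bordism class of the zero pattern $\a(\d X)$ via Theorem \ref{th1.7}, run the surgery algorithm of Figures 10--14 to reach a multiple of the canonical generator while observing that every move preserves the signed count $\#\{\d_2^{+,\oplus}\}-\#\{\d_2^{+,\ominus}\}$ (the paper handles your ``Step 3 obstacle'' implicitly this way, by designing the surgeries so that the $\d_2^{+,\oplus/\ominus}$ loci are untouched while only the $\d_2^{-,\ast}$ loci change), and deduce the inequality from the partition $\d_2^+X(v)=\d_2^{+,\oplus}X(v)\coprod\d_2^{+,\ominus}X(v)$. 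The argument is correct and matches the paper's proof in structure and substance.
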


\begin{proof} Let $d =_{\mathsf{def}} \max_{\hat\g} \# \{\hat\g \cap \a(\d X)\}$ be the maximal cardinality of the intersections of the $\hat v$-trajectories $\hat\g$ with the loops' pattern $\a(\d X)$. Since $X$ bounds $\d X$, $d$ is even.

For any  $\hat v$-generic immersion $\a: X \subset A$, we pick an auxiliary function $z_\a : A \to \R$, adjusted to $\a$ as in (\ref{eq1.9}).  By the previous arguments, this choice  produces the loop  $J_{z_\a}: S^1 \to  \mathcal F^d_{\leq 2}$. Although the loop $J_{z_\a}$ is generated by an immersion $\a: X \to A$, in the process of deforming $J_{z_\a}$ by a cobordism $F: A \times [0, 1] \to \R$ with no $\theta$-vertical inflections as in Definition 9.4, we may destroy this connection with the original $\a$:  the new curve patterns $\{C_t\}_{t \in [0, 1]}$ in $A$ may not be produced by immersions $\{\a_t: X \to A\}_{t \in [0, 1]}$.\smallskip

Let us describe an algorithm (see Figures 10 - 14)  that reduces a given pattern $C_0 = J_{z_\a}^{-1}(0) \subset A$ to a pattern from \emph{the canonical set of patterns} $\{n\cdot K\}_{n \in \Z}$ (as in Fig. 9) by a cobordism $F: A \times [0, 1] \to \R$. We will perform a sequence of elementary surgeries on the set $C_0$, executed inside of the cylindrical shell $A \times [0, 1]$.  It is sufficient to construct a smooth surface $W \subset A \times [0, 1]$ as in Definition 9.4, for which $W \cap A \times \{0\} = C$ and $W \cap A \times \{1\} = J^\a \cdot K$; then one can define a function $F: A \times [0, 1] \to \R$, appropriately adjusted to $W$, so that $0$ is a regular value of $F$ and $F^{-1}(0) = W$.  

\begin{figure}[ht]\label{fig1.9}
\centerline{\includegraphics[height=2.6in,width=4.5in]{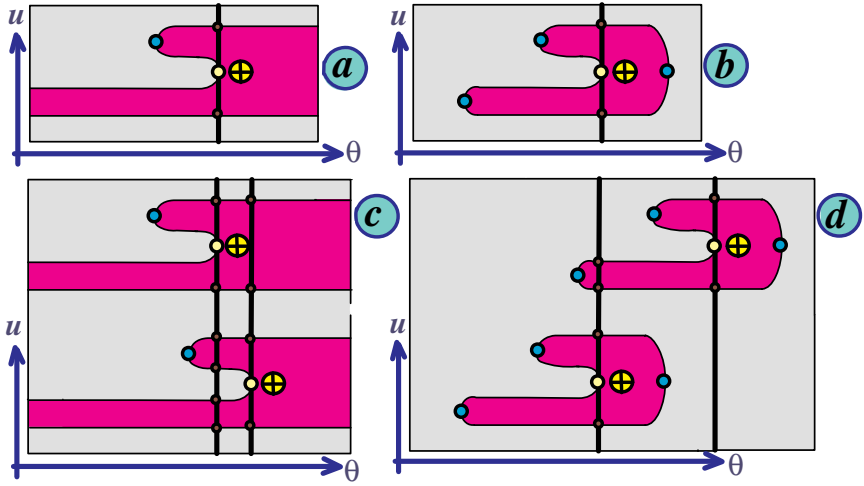}}
\bigskip
\caption{\small{Two equivalent representations of a generator $\kappa \in \pi_1(\mathcal F_{\leq 2})$ (diagrams (a) and (b)). Diagrams (c) and (d) portray $2\kappa$. Note the polariity $\oplus$ of the tangent $\hat v$-trajectories with the combinatorial pattern $(\dots 121 \dots)$. A mirrow image of these shapes with respect to a vertical line delivers $-\kappa$ and $-2\kappa$.}}
\end{figure}

As we modify the $t$-section $C_t \subset A \times \{t\}$, we keep track of the checker board polarities $+, -$, attached to the regions of $A \setminus C_t$; through the process, the polarity of the region adjacent to $\d A$ remains ``$+$".  Let us denote by $A_t^-$ the region of the negative polarity that is ``bounded" by the curve pattern $C_t \subset A \times \{t\}$. $A_t^+$ denotes the complementary set. Informally, the regions of polarity $+$ are the regions where the function $F$ from Definition \ref{def1.9} is non-negative.

With the help of this polarization $\{A_t^+, A_t^-\}$ of the annulus $A$, the points $a \in \d_2(C_t, \hat v)$, where $\hat v$-flow is tangent to $C_t$, acquire the polarization ``$+$" or ``$-$": if the germ of the trajectory $\hat\g_a$ is contained in $A_t^-$, then the polarity of $a$ is defined to be ``$+$", otherwise it is ``$-$". Moreover, if the inner normal $\nu_a$ to the region $A^-_t$ at $a$ has the same direction as the coordinate $\theta$ on $A$, then the second polarity of $a$ is defined to be ``$\oplus$", otherwise it is ``$\ominus$". As a result, we can talk about the four sets: $\d_2^{+, \oplus}(A_t^-, \hat v)$, $\d_2^{+, \ominus}(A_t^-, \hat v)$, $\d_2^{-, \oplus}(A_t^-, \hat v)$, 
$\d_2^{-, \ominus}(A_t^-, \hat v)$. We simplify the notations for these loci as: $\d_2^{+, \oplus}C_t, \; \d_2^{+, \ominus}C_t, \; \d_2^{-, \oplus}C_t, \; \d_2^{-, \ominus}C_t.$
\smallskip

\begin{figure}[ht]\label{fig1.10}
\centerline{\includegraphics[height=4in,width=4in]{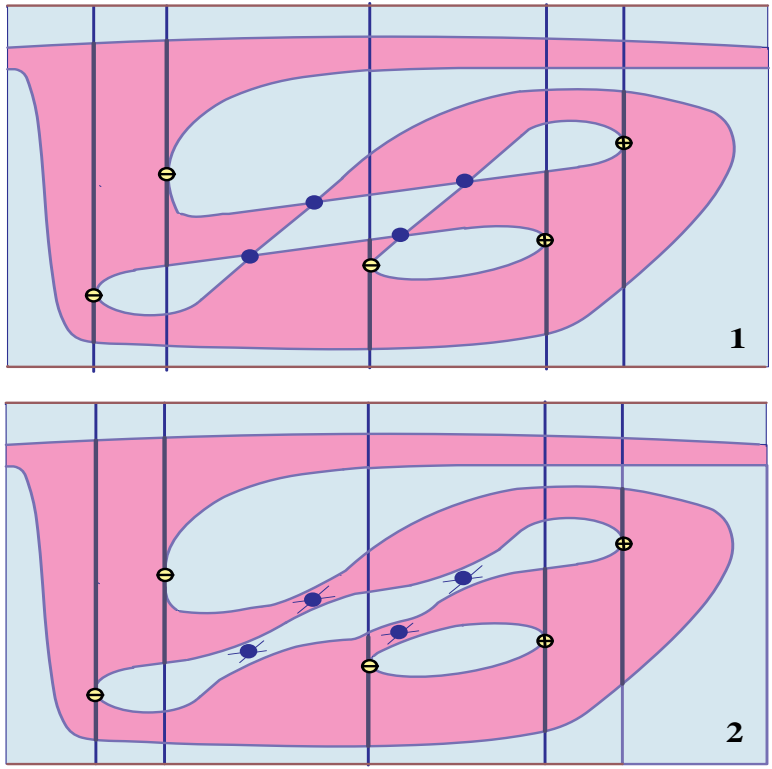}}
\bigskip
\caption{\small{An immersion $\a$ of a  surface $X$---the torus with two holes---in an annulus $A$ (represented as a rectangle with the two vertical sides to be glued).  The points whose $\a$-fiber has cardinality $2$ form a unshaded parallelogram (diagram 1). Eliminating crossings of $\a(\d X)$ by $1$-surgery (diagram 2).}}
\end{figure}

\begin{figure}[ht]\label{fig1.11}
\centerline{\includegraphics[height=4in,width=4in]{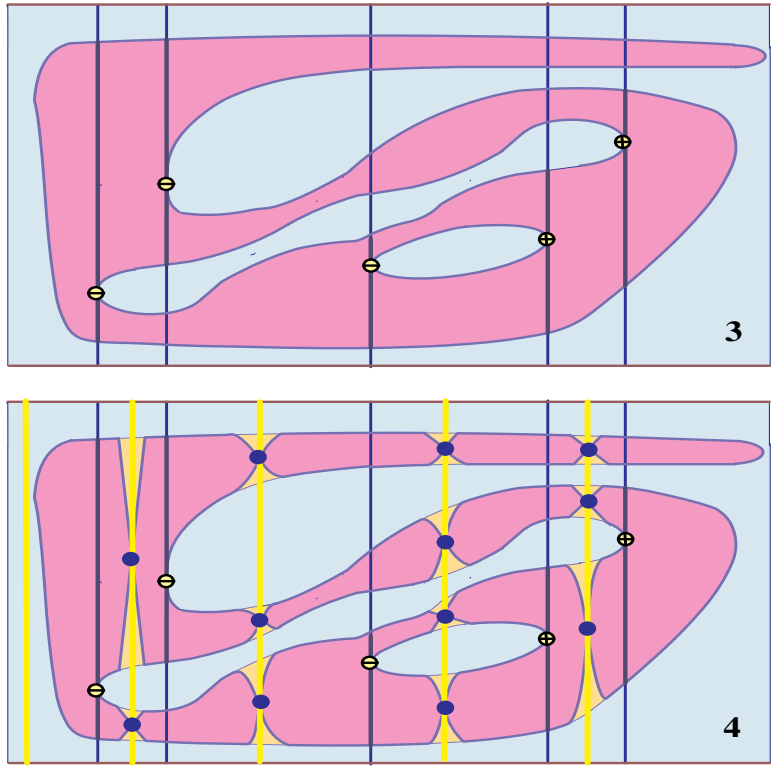}}
\bigskip
\caption{\small{$1$-surgery on $A_t^+$ places the curve pattern $C_t$ in a rectangle (diagram 3). In preparation for the next step, new crossings are introduced momentarily so that they reside in the $\theta$-fibers that separate the trajectories through the set $\d_2^+C_t$ (diagram 4).}}
\end{figure}

\begin{figure}[ht]\label{fig1.12}
\centerline{\includegraphics[height=4in,width=4in]{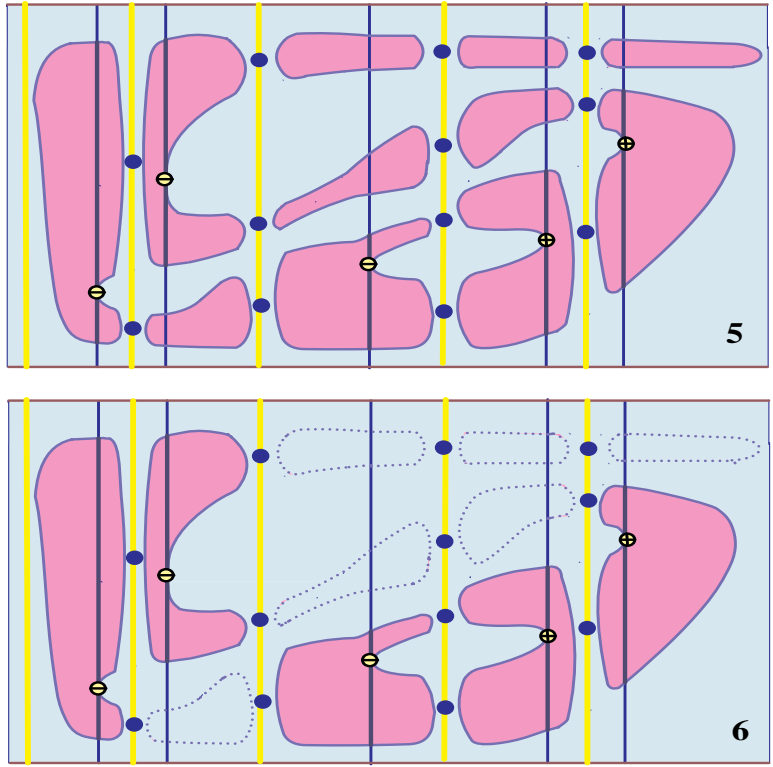}}
\bigskip
\caption{\small{Further $1$-surgery on $A_t^-$ breaks the curves from $C_t$ in groups, confined to $\theta$-vertical strips, so that each strip contains a single point from $\d_2^+C_t$ (diagram 5). Then the disks with no points from $\d_2^+C_t$ are eliminated by $2$-surgery on $A_t^+$ (diagram 6).}}
\end{figure}

\begin{figure}[ht]\label{fig1.13}
\centerline{\includegraphics[height=4in,width=4in]{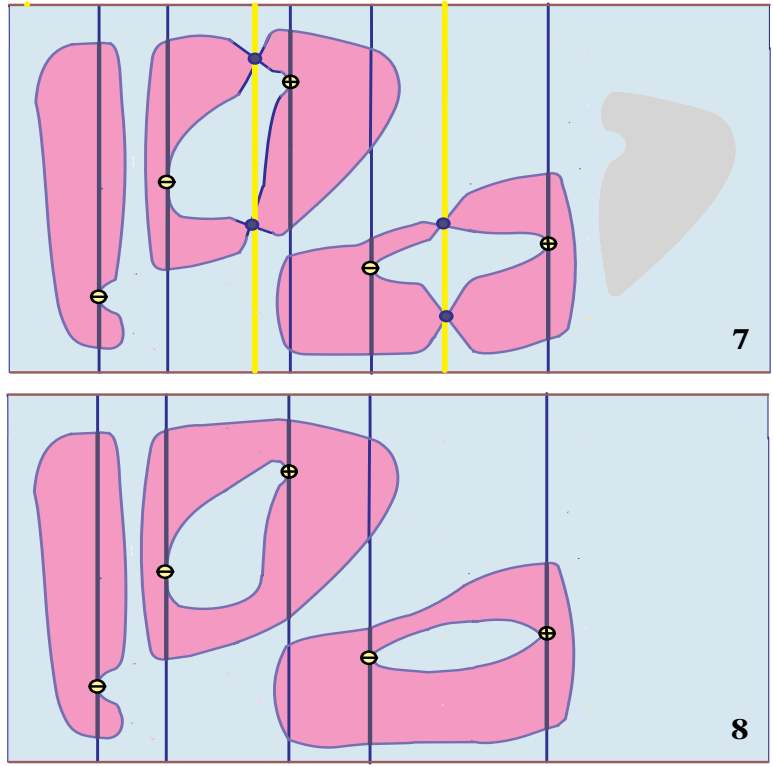}}
\bigskip
\caption{\small{In preparation for cancellation of trajectories of the opposite second polarity, the regions of $A_t^-$ are moved by an isotopy which preserves the fibers of $\theta: A \to S^1$ (the isotopy is not required by our $7$ step algorithm; it is applied only to decrease the number of figures), and the new crossings are created momentarily (diagram 7). Completing the $1$-surgery places each pair of trajectories of the opposite second polarity in an annulus, a portion of $A_t^-$. }}
\end{figure}

\begin{figure}[ht]\label{fig1.14}
\centerline{\includegraphics[height=4in,width=4in]{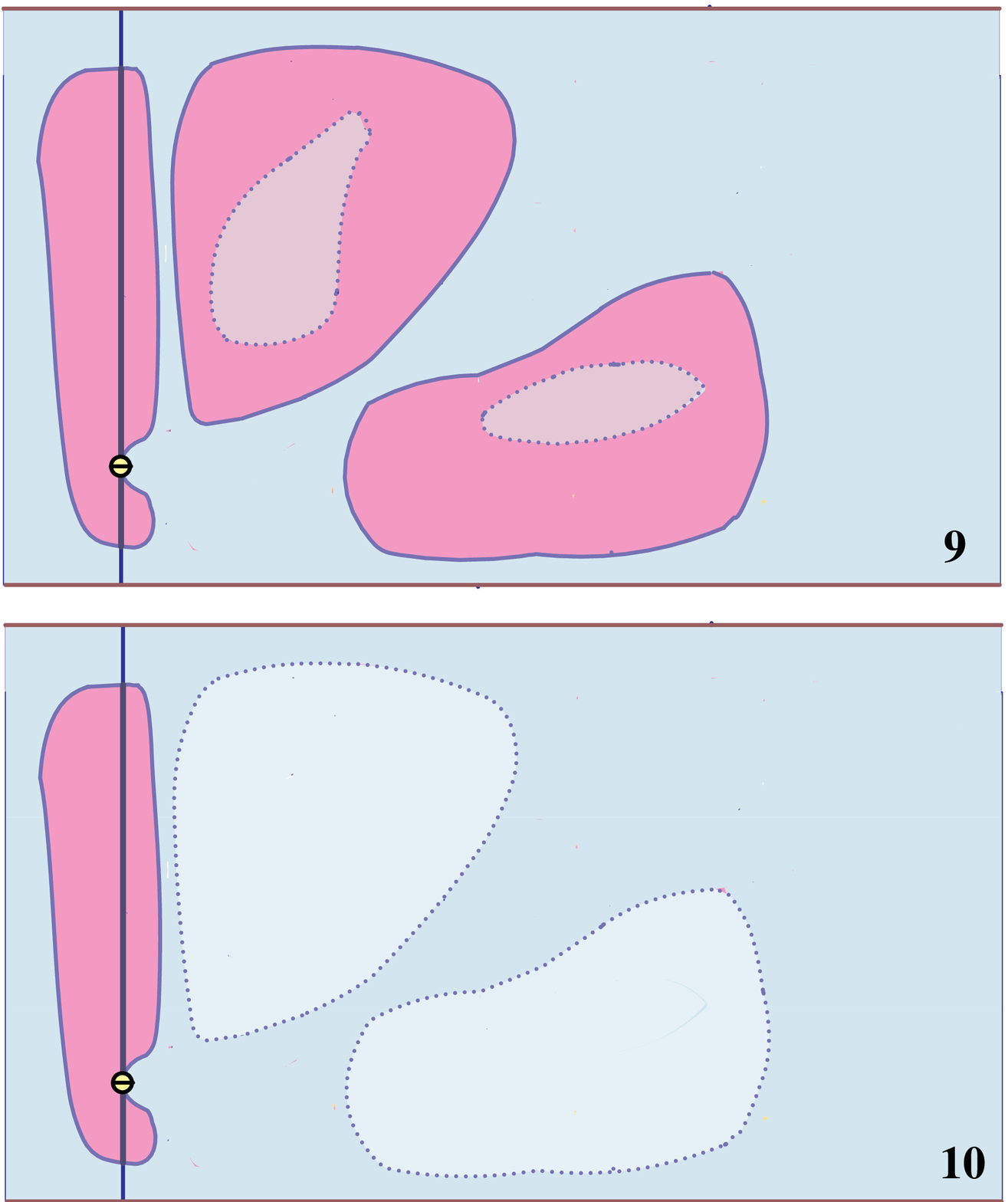}}
\bigskip
\caption{\small{Performing $2$-surgeries on the inner circles of the annuli in Fig. 13 converts $A_t^-$ into a disjoint union of disks, each disk containing a single trajectory of the same second polarity $\{\oplus, \ominus\}$ at most (diagram 10). Further $0$-surgery eliminates the disks without points of the first polarity $+$ (diagram 10). This leaves only the disks with a single point of the first polarity $+$ and a common second polarity ($\ominus$ in this example, so that $J^\a = -1$ for the $\a$ in Fig. 10).}}
\end{figure}

Let us describe an algorithm that constructs a cobordism with no $\theta$-vertical inflextions between a given loop pattern $\a(\d X)$ and a few copies of the canonical pattern as in Fig. 9.

{\bf (1)} At any stage of this construction, we can resolve each crossing $a \in(C_t)^\times$ in a preferred way. The two branches of the preferred resolution will be transversal to the $\theta$-fiber through $a$.  When the vector $\hat v(a)$ points inside $A_t^-$, the resolution will add a 1-handle to $A_t^+$, when the vector $\hat v(a)$ points inside $A_t^+$, the resolution will add a 1-handle to $A_t^-$. In any case, the sets $\d_2^{+, \oplus}C_t$, $\d_2^{+, \ominus}C_t$ are not affected. As a result of these resolutions, the new pattern $C'_t \subset A$ is a disjoint union of \emph{simple} smooth curves with no $\theta$-vertical inflections.  Moreover,  it shares with $C_0$ the same sets  $\d_2^{+, \oplus}\sim$, $\d_2^{+, \ominus}\sim$ \footnote{Later on, we may be forced to introduce momentarily new crossings for the exceptional sections $C_t$'s, which eventually will be eliminated.} (see Fig. 10).

{\bf (2)} Next we will pick and fix one regular value $\theta_\star \in S^1$ of the map $\theta: C_t \to S^1$. The intersection of $\theta^{-1} \cap A_t^+$ consists of several intervals. By performing $1$-surgery of $C_t$ along each  of these intervals we get a new loop pattern $C'_t$ which has an empty intersection with the $\theta$-fiber over the point $\theta_\star$. Moreover no $\theta$-vertical inflections were introduced in the process. The original loci $\d_2^{+, \oplus}\sim$, $\d_2^{+, \ominus}\sim$ are preserved (while the loci $\d_2^{-, \oplus}\sim$, $\d_2^{-, \ominus}\sim$ are changed). Therefore we may assume that $C_t = C'_t$ is contained in a rectangle $R \subset A$ and $C_t$ shares the numbers of points from the loci $\d_2^{+, \oplus}\sim$, $\d_2^{+, \ominus}\sim$ with the original $C_0$ (see Fig. 11, diagram 3). 

{\bf (3)} Consider the set $\Theta^+_t \subset S^1$ of critical values of $\theta: C_t \to S^1$ for the critical points from $\d_2^+C_t$.  We pick a regular value $\theta^\sharp_i$ in-between each pair of adjacent critical values $\theta_i, \theta_{i+1} \in \Theta^+_t $. Then we apply $1$-surgery on $A_t^+$ as in (2) to empty the region $A_t^-$ in the vicinity of the fiber $\theta^{-1}(\theta_i^\sharp)$ (see Fig. 11, diagram 4).

{\bf(4)} As a result of these surgeries, $A_t^-$ turns into a disjoint union of connected regions, each of which contains a single point of the set $\d_2^+(C_t)$ at most (see Fig. 12, 
diagram 5). 

{\bf (5)} By Lemma \ref{lem1.2}, any connected region of $A_t^- \subset R \subset A$ with no points from $\d_2^+\sim$ is a disk. It can be eliminated by a $0$-surgery (see Fig. 12, 
diagram 6). 

{\bf (6)} Pairs of  points $a \in  \d_2^{+, \oplus} \sim$ and $b \in \d_2^{+, \ominus}\sim$ can be cancelled via a surgery on their regions  $D_a, D_b \subset A_t^-$ (as shown in  Fig. 13, diagrams 7 and 8, and Fig. 14, diagrams 9 and 10).  This cancellation of pairs will be executed gradually and with some care. 

Any strip $S_i \subset A$, bounded by the vertical lines $\{\theta = \theta^\sharp_i \}$ and $\{\theta = \theta^\sharp_{i+1} \}$, contains a single region $D_a$ with $a \in \d_2^+C_t$. If the points of opposite second polarity ($\oplus, \ominus$) exist, then there are two \emph{adjacent} vertical strips $S_i, S_{i+1}$ such that  $a_i \in S_i \cap  \d_2^{+, \oplus} C_t$ and $b_i \in S_{i+1} \cap  \d_2^{+, \oplus} C_t$ have opposite second polarities.
We attach to $D_{a_i} \coprod D_{b_i}$ two $1$-handles to form an annulus $A_{a_ib_i}$ as in Figures 12 and 13. To complete the cancellation of opposite pairs, we perform $2$-surgery on the inner circles of the annuli $\{A_{a_ib_i}\}$. This converts the annuli into disks, residing in $A_t^-$. They can be eliminated by $0$-surgery along the outer circles of $\{A_{a_ib_i}\}$'s (see Fig. 14, diagram 10).

It may happen that the model domain $D_{a_i}$ as in Fig. 9, (b), and ``its mirror image" $D_{b_i}$ with respect to a vertical line $\{\theta = \theta_i^\sharp\}$ are positioned so that their horns are pointing in opposite directions. In such a case, they can be cancelled by a slightly different sequence of elementary surgeries (see \cite{Ar}). Alternatively, taking a trip around the annulus $A$, we will find a pair of adjacent strips such that  their domains $D_a$ and $D_b$ of opposite second polarity  can ``lock horns". For them, the previous recipe will apply. 

This cancellation procedure can be repeated by considering the remaining adjacent pairs of regions with the opposite second polarity untill no regions with the opposite second polarity are left. 

{\bf (7)} As a result of all these steps, $A^-_t$ is either empty, or a disjoint union of disks (as in Fig. 9), each of which contains a single point from $\d_2^+\sim$ (and tree points from $\d_2^-\sim$); \emph{the second polarities of such points are the same for all disks}. Thus we got an integral multiple of the basic pattern as in Fig. 9  and proved that $\pi_1(\mathcal F_{\leq 2})\approx \Z$.
\smallskip

Note that the original difference $\#\{\d_2^{+, \oplus}X(v)\} - \#\{\d_2^{+, \ominus}X(v)\}$ between the numbers of $\hat v$-trajectories with polarities $\oplus$ and $\ominus$ and of the combinatorial types $(\dots 121 \dots)$ is preserved under the modifications in (1)-(7). 

The original maximal cardinality $d$ of the $\theta$-fibers evidently does not increase under the steps  (1)-(7). 

Finally, we notice that 
$$c^+(v) =_{\mathsf{def}} \#\{\d_2^{+, \oplus}(v)\} +  \#\{\d_2^{+, \ominus}(v)\}$$ 
$$\geq \quad | \#\{\d_2^{+, \oplus}(v)\} - \#\{\d_2^{+, \ominus}(v)\}|. $$
\end{proof}

\noindent{\bf Remark 9.1.} 
It is interesting and somewhat surprising to notice that the invariant $J^\a =  \#\{\d_2^{+, \oplus}(v)\} - \#\{\d_2^{+, \ominus}(v)\}$ reflects more the topology of the field $v= \a^\ast(\hat v)$  than the topology of the surface $X$: in fact,  any integral value of $J^\a$ can be realized by a traversally generic  field $v$ on a \emph{disk} $D$ which even admits a convex envelop! A portion of  the boundary $\d D$ looks like a snake with respect to the field $\hat v$ of the envelop. For any $X$, the effect of deforming a portion of $\d X$ into a snake is equivalent to adding several times a spike (an edge and a pair of univalent and trivalent verticies) to the graph $\mathcal T(v)$. Evidently, these operations do not affect $H_1(\mathcal T(v); \Z) \approx H_1(X; \Z)$.

In contrast,  $\#\{\d_2^{+, \oplus}(v)\} + \#\{\d_2^{+, \ominus}(v)\} \geq 2|\chi(X)|$ has a topological significance for $X$. \smallskip

For example, for $\a$ as in Fig. 8, $J^\a =0$. If we subject $\a$ to an isotopy that introduces a snake-like pattern of Fig. 9, (a), then for the new immersion $\a'$, the invariant $J^{\a'} = 1$. \hfill $\diamondsuit$
\smallskip

\noindent{\bf Remark 9.2.} 
Consider a connected oriented surface $X$ with a connected boundary.  It is a boundary connected sum of a few copies of $T^\circ$, the torus with a hole. A punctured torus admits an immersion $\a: T^\circ \to A$ in the annulus so that the cardinality of the fibers of $\theta: \a(\d T^\circ) \to S^1$ does not exceed $6$ (see Fig. 8). Therefore,  any connected oriented surface $X$ with boundary admits an immersion $\a: X \to A$ with the property $\#\{ \theta^{-1}(\theta_\star) \cap \a(\d X)\} \leq 6$ for all $\theta_\star \in S^1$.
\hfill $\diamondsuit$

\smallskip

Let us glance at the implications of Theorems \ref{th1.7}  and \ref{th1.8} and give them a new, perhaps, more natural spin. \smallskip

The finite-dimensional space $\mathcal P^d_{\leq 2}$ of real monic polynomials of  an even degree $d$ and with no real roots of multiplicity $\geq 3$ is a natural ``approximation" of  the functional space $\mathcal F^d_{\leq 2}$. Of course, a polynomial from $\mathcal P^d_{\leq 2}$ is not a function from $\mathcal F_{\leq 2}$: it is not identically $1$ outside of a compact set. However, there is an embedding $\mathcal I_d:   \mathcal P^d_{\leq 2}  \to \mathcal F_{\leq 2}$ that, in the vicinity of $\pm \infty$, ``levels down to $1$"  any real polynomial $P$ of an even degree $d$. Its image belongs to the subspace $\mathcal F^d_{\leq 2}$. This embedding is described by an analytic formula (see \cite{V}) as follows. Fix an auxiliary smooth function $\chi: \R \to [0, 1]$ such that $\chi(u) = 0$ for $|u| \leq 1$,  $\chi(u) = 1$ for $|u| \geq 2$, and $\d\chi/\d u \neq 0$ for $1 < |u| < 2$. Let $\mu(P)$ denote the sum of absolute values of the coefficients of the monic  $P$. Then $$\mathcal I_d(P)(u) =_{\mathsf{def}} P(u) + (1 - P(u))\cdot \chi(u/\mu(p)).$$

In fact, the zeros of any polynomial $P$ are in 1-to-1 correspondence with the zeros of  the function $\mathcal I_d(P)$ and their multiplicities are preserved.
\smallskip

Consider the ``forbidden set" $\mathcal F^d_{\geq 3}  \subset \mathcal F^d$ of functions $f$ that have at least one zero of multiplicity $\geq 3$. Among them, the functions $f$ that have exactly one zero of multiplicity $3$ form an open and dense subset $(\mathcal F^d_{\geq 3})^\circ$. 

For each $f \in (\mathcal F^d_{\geq 3})^\circ$, let  $u^\star_f$ be the unique zero of multiplicity $3$. \smallskip

The set $\mathcal F^d_{\geq 3}$ has codimension $2$ in $\mathcal F^d$; so loops in $\mathcal F^d_{\leq 2}$ may be \emph{linked} with the locus  $\mathcal F^d_{\geq 3}$ in $\mathcal F^d$. 
Here is a model example of such a link (see  Fig. 15). 

\begin{figure}[ht]\label{fig1.16}
\centerline{\includegraphics[height=4in,width=4.3in]{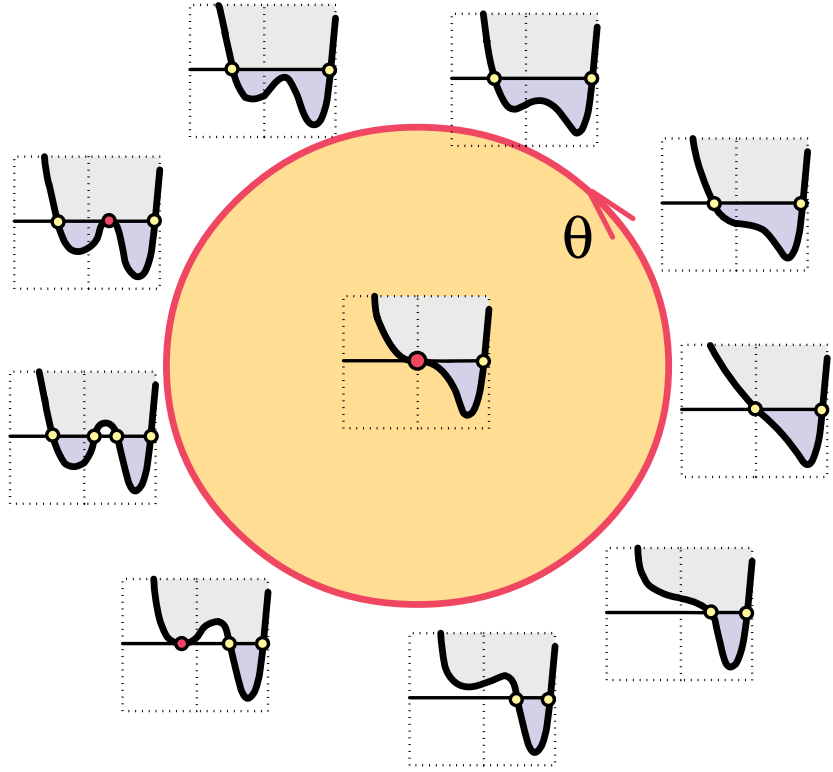}}
\bigskip
\caption{\small{A loop $\big\{P(\theta, u) =_{\mathsf{def}}(u-2)[u^3 + \cos(\theta) u + \sin(\theta)]\big\}_{\theta \in [0, 2\pi]}$ in the space $\mathcal P^4_{\leq 2}$, which represents a generator of $\pi_1(\mathcal P^4_{\leq 2})$.}}
\end{figure}

For any $u^\ast \in (-2, 2)$, consider a $\theta$-family of quartic $u$-polynomials $$\big\{P_{u^\star}(u, \theta) =_{\mathsf{def}} (u-2)\big[(u -u^\star)^3 + \cos(\theta) (u -u^\star) + \sin(\theta)\big]\big\}_{\theta \in [0, 2\pi]}.$$
Each $P_{u^\star}(u, \theta)$ belongs to the space $\mathcal P^4_{\leq 2}$. The $\mathcal I_4$-image of this $\theta$-family forms a loop $\{h_{u^\star}(u, \theta)\}_{\theta \in [0, 2\pi]}$ in $\mathcal F^4_{\leq 2}$. The loop bounds a $2$-disk $$D^2_{u^\star} =_{\mathsf{def}} \big\{\mathcal I_4\big((u-2)[(u -u^\star)^3 + x_1(u -u^\star) + x_0]\big)\big\}_{\{x_0^2 + x_1^2 \leq 1\}}$$ in $\mathcal F^4$, which hits  the subspace $\mathcal F^d_{\geq 3}$ at the singleton $\mathcal I_4\big((u-2)(u -u^\star)^3\big)$. 

Similarly, for any $f \in (\mathcal F^d_{\geq 3})^\circ$ and $d \geq 4$, the loop $$L_f =_{\mathsf{def}}\big\{f(u, \theta) = h_{u^\star_f}(u, \theta) \cdot \big[f(u)/(u- u^\star_f)^3\big]\big\}_{\theta \in [0, 2\pi]}$$
 resides in $\mathcal F^d_{\leq 2}$ and is linked with the component of  $(\mathcal F^d_{\geq 3})^\circ$ that contains $f$.

Since the correspondence $f \to u^\star_f$ is continuous for $f \in (\mathcal F^d_{\geq 3})^\circ$, the homotopy class  $[L_f] $ of the loop $L_f$ does not depend on the choice of $f$ within each component of $(\mathcal F^d_{\geq 3})^\circ$.  

In fact, by Theorem \ref{th1.8}, $[L_f]$ is a generator  $\kappa$ of $\pi_1(\mathcal F^d_{\leq 2})$. In particular, the $\mathcal I_4$-image of the loop $$ \big\{P(\theta, u) =_{\mathsf{def}}(u-2)[u^3 + \cos(\theta) u + \sin(\theta)]\big\}_{\theta \in [0, 2\pi]}$$  in $\mathcal P^4_{\leq 2}$ is a generator $\kappa$ of $\pi_1(\mathcal F_{\leq 2}) \approx \Z$. Its zero set in the annulus $A = S^1 \times [-3, 3]$ (equipped with the coordinates $(\theta, u)$) is a union of  two loops, similar to the ones shown in Fig. 9, (a). 
\smallskip

Theorem 11 from \cite{V}, makes an important for us claim:   the $\mathcal I_d$-induced map in homology $$(\mathcal I_d)_\ast: H_j(\mathcal P^d_{\leq 2};\, \Z) \to H_j(\mathcal F_{\leq 2};\, \Z)$$ is an isomorphism for all  $j \leq d/3$. In particular, $$(\mathcal I_4)_\ast: H_1(\mathcal P^4_{\leq 2};\, \Z) \to H_1(\mathcal F_{\leq 2};\, \Z) \approx \Z$$ is an isomorphism. Moreover, $$(\mathcal I_4)_\ast: \pi_1(\mathcal P^4_{\leq 2}) \to \pi_1(\mathcal F_{\leq 2}) \approx \Z$$
is an isomorphism as well \cite{Ar}. As we proceed, let us keep these facts in mind.
\smallskip

Of course it is much  easier  to visualize events in the $4$-dimensional space $\mathcal P^4_{\leq 2}$ than their analogues in the infinite-dimensional $\mathcal F^4_{\leq 2}$. This will be our next task. It will lead us to explore the beautiful stratified geometry of the Swallow Tail discriminant surface.

\begin{figure}[ht]\label{fig1.15}
\centerline{\includegraphics[height=3in,width=3in]{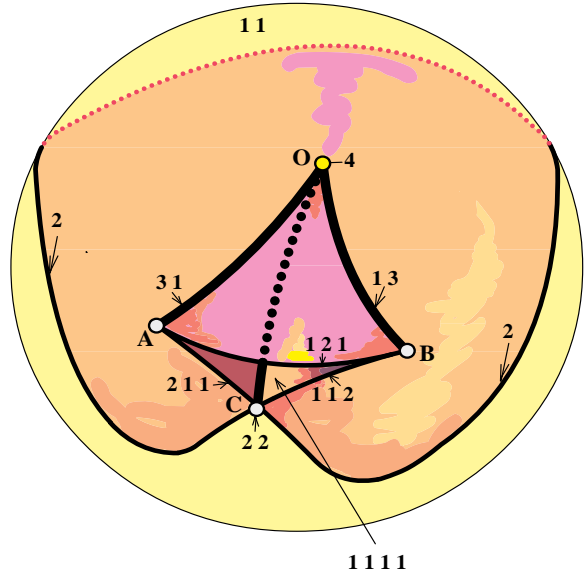}}
\bigskip
\caption{\small{The Swallow Tail Singularity is the critical locus of the Whitney projection of the hypersurface  $\{P(u, x_0, x_1, x_2) = u^4 + x_2 u^2 + x_1 u + x_0 = 0\}$ onto the space $\R_{\mathsf{coef}}^3$ with the coordinates $(x_0, x_1, x_2)$. The strata in  $\R_{\mathsf{coef}}^3$ are indexed by combinatorial types of real divisors of the polynomial $P(u, \sim)$: (1111), (11), $\emptyset$; (211), (121), (112); (22), (13), (31); (4). They divide the target space into three 3-cells, four 2-cells, three 1-cells, and one 0-cell.}}
\end{figure}

Consider the subspace $\tilde{\mathcal P}^4_{\leq 2} \subset \mathcal P^4_{\leq 2}$, formed by the monic depressed\footnote{that is, with the zero coefficient next to $u^3$} polynomials $P(u)$. Since $\tilde{\mathcal P}^4_{\leq 2}$ is a deformation retract of  $\mathcal P^4_{\leq 2}$ (\cite{Ar}), these two are homotopy equivalent. So it is  a bit easier to visualize the generator $\tilde\kappa \in \pi_1(\tilde{\mathcal P}^4_{\leq 2})$ (rather than $\kappa \in \pi_1(\mathcal P^4_{\leq 2})$), since $\tilde{\mathcal P}^4_{\leq 2}$ is a domain in $\R_{\mathsf{coef}}^3$, the space of the coefficients $x_0, x_1, x_2$. \smallskip

The polynomials with real roots of multiplicity $\geq 2$ form a singular surface $H \subset \R_{\mathsf{coef}}^3$, the famous Swallow Tail (see Fig. 16). The point-polynomial $u^4$ (the strongest singularity $O$ of $H$), together with the polynomials that have one root of multiplicity $3$, must be excluded from $\R_{\mathsf{coef}}^3$ to form $\tilde{\mathcal P}^4_{\leq 2}$. These excluded  polynomials form two branches of a curve $\mathcal C \subset H$, whose apex $O$ is the origin and whose branches extend to infinity. One branch $OB$ of $\mathcal C$ correspond to polynomials with the smaller simple root followed by the root of multiplicity $3$; the other branch $OA$  of $\mathcal C$ correspond to polynomials with the smaller root of multiplicity $3$, followed by the simple root. The curve $OC$, the self-intersection locus of $H$, represents polynomials with two distinct roots of multiplicity $2$. It belongs to the set $\tilde{\mathcal P}^4_{\leq 2}$ .

Thus $\pi_1(\tilde{\mathcal P}^4_{\leq 2}) = \pi_1(\R_{\mathsf{coef}}^3 \setminus \mathcal C) \approx \Z$. Therefore the generator $\tilde\kappa \in \pi_1(\tilde{\mathcal P}^4_{\leq 2})$ is represented by an oriented loop in $\R_{\mathsf{coef}}^3$ that winds once around the curve $\mathcal C$.  Such loop $\tilde\kappa$ must hit once the locus $H_{(121)} \subset H$, formed by polynomials whose real zeros conform to the pattern $(121)$: indeed, the curve $\mathcal C = OA \cup OB \cup O$ is the boundary of the surface $H_{(121)}$. 

Traversing $H_{(121)}$ from the chamber of polynomials with $4$ real roots to the chamber of polynomials with $2$ real roots picks a normal to $H_{(121)}$ orientation. Similar rule of orientation can be applied to the stratum $H_{(112)}$ bounded by the curves $OC \cup OB$,  the stratum $H_{(211)}$ bounded by the curves $OC \cup OA$, and the stratum $H_{(2)}$ that separates the chamber with two real roots from the chamber with no real roots at all. \smallskip

For any smooth loop $\b: S^1 \to \tilde{\mathcal P}^4_{\leq 2}$ which is in general position to $H$ and its strata, consider the zero set $$\D(\b) =_{\mathsf{def}}  \{(\theta, u)|\;  \b(\theta)(u) = 0\}\subset S^1 \times \R.$$
Thanks to the very definition of the target space  $\tilde{\mathcal P}^4_{\leq 2}$, the set $\D(\b)$ is a collection of closed curves with transversal self-intersections, no triple intersections, no self-tangencies, and no $\theta$-vertical inflections.  The cardinality of the fiber of $\theta: \D(\b) \to S^1$ does not exceed $4$.\smallskip

Now the linking number $J^\b =_{\mathsf{def}} \mathsf{lk}(\b(S^1), \mathcal C)$ equals  the algebraic intersection of the loop $\b(S^1)$ with the surface $H_{(121)}$ that bounds $\mathcal C$. Each time  the loop $\b(S^1)$ intersects the surface $H_{(121)}$ transversally in the direction of the positive normal, a point from $\d_2^{+, \oplus}\D(\b)$ is generated, and each time $\b(S^1)$ intersects $H_{(121)}$ is  in the direction of the negative normal, a point from $\d_2^{+, \ominus}\D(\b)$ is generated.  In particular,  the generator $\kappa: S^1 \to  \tilde{\mathcal P}^4_{\leq 2}$ of $\pi_1(\tilde{\mathcal P}^4_{\leq 2})$ has the property  $\mathsf{lk}(\b(S^1), \mathcal C) = 1$. Therefore we get $$\mathsf{lk}(\b(S^1), \mathcal C) = \b(S^1) \circ H_{(121)}  = \#\{\d_2^{+, \oplus}\D(\b)\} - \#\{\d_2^{+, \ominus}\D(\b)\},$$ the baby model of the formula from Theorem \ref{th1.8}. This number is an invariant of the homotopy class of the loop $\b$. \smallskip

Similarly, each time  the loop $\b(S^1)$ intersects the surface $H_{(2)}$ transversally in the direction of the positive normal, a point from $\d_2^{-, \oplus}\D(\b)$ is generated, and each time $\b(S^1)$ intersects $H_{(2)}$ is  in the direction of the negative normal, a point from $\d_2^{-, \ominus}\D(\b)$ is generated. \smallskip

Note that perhaps not any set $\D(\b)$ is the image $\a(\d X)$ of an immersion $\a: X \to S^1 \times \R$ for some orientable surface $X$. But the generator $\kappa$ in  Fig. 9,
(a), is.  Also, if we insist that the cardinality of the $\theta$-fibers $\leq 4$, we cannot accommodate surfaces $X$ with handles. According to Remark 9.2, to accommodate them, we need to deal with polynomials/functions of degree $6$ at least. 
\smallskip

Let us describe briefly how this ``degree $6$ polynomial model" works. We will see that the increasingly complex combinatorics of tangency begins to play a significant role. 

To simplify the notations, we identify $\mathcal  P^6$ with its image $\mathcal I_6(\mathcal  P^6) \subset \mathcal F^6$ and the cylinder $S^1 \times \R$ with the interior of the annulus $A$. 

The combinatorial patterns $\omega$ of real divisors of monic degree $6$ real polynomials are numerous: 
\begin{itemize}
\item (111111), (1111), (11), $\emptyset$;
\item (21111), (12111), (11211), (11121), (11112), (211), (121), (112), (2); 
\item (2211), (1221), (1122), (2121), (2112), (1212), \,  (22);
\item (3111), (1311), (1131), (1113), (31), (13); 
\item etc.
\end{itemize}

We denote by $\mathcal P^6_\omega$ the set of real monic polynomials whose real divisors conform to the combinatorial pattern $\omega = (\omega_1, \omega_2, \dots , \omega_s)$, where $\{\omega_i\}_{1 \leq i \leq s}$ are natural numbers. We denote by $|\omega|$ the $l_1$-norm of the vector $\omega$. Evidently, $|\omega| \leq 6$.

The sets  $\{\mathcal P^6_\omega\}_\omega$ form a partition of the space $\mathcal P^6$. In fact, each $\mathcal P^6_\omega$ is homeomorphic to an open ball of dimension $6 - |\omega|'$, where $|\omega|' = |\omega | - s$ (\cite{K3}). The closure $\bar{\mathcal P}^6_\omega$ of $\mathcal P^6_\omega$ in $\R^6_{\mathsf{coef}}$ is an affine semi-algebraic variety. By resolving $\bar{\mathcal P}^6_\omega$ appropriately,  one can show that  the partition $\{\mathcal P^6_\omega\}_\omega$ defines a structure of a $CW$-complex on $\mathcal P^6$, or rather, on its one-point compactification  (\cite{K3}). So we may think of $\mathcal P^6_\omega$'s as being ``cells" (although $\bar{\mathcal P}^6_\omega$ may not be homeomorphic to an infinite cone over a closed ball).

Let $H \subset \mathcal P^6$ denote the set of monic polynomials with multiple roots, the $5$-dimensional discriminant variety. The first bullet lists the four $6$-dimensional chambers-cells in which $H$ divides $\mathcal P^6$. The second bullet lists all $5$-dimensional strata in which $H$ is divided by the strata of dimension $4$. The first three bullets list the monic polynomials that form the space $\mathcal P^6_{\leq 2}$. The third and the fourth bullets list the $4$-dimensional cells-strata. The forbidden locus $\mathcal P^6_{\geq 3}$ is the union of strata, labeled by the combinatorial types in the fourth bullet and on. Then $\mathcal P^6_{\geq 3}$ is the closure of the set $$\mathcal P^6_{(3111)} \cup \mathcal P^6_{(1311)} \cup \mathcal P^6_{(1131)} \cup  \mathcal P^6_{(1113)} \cup \mathcal P^6_{(31)} \cup \mathcal P^6_{(13)}.$$  

We can orient each cell $\mathcal P^6_\omega$ so that  
$$\d \bar{\mathcal P}^6_{(12111)} =  \bar{\mathcal P}^6_{(3111)} - \bar{\mathcal P}^6_{(1311)} + \bar{\mathcal P}^6_{(1221)} -\bar{\mathcal P}^6_{(1212)},$$
$$\d \bar{\mathcal P}^6_{(11121)} =  \bar{\mathcal P}^6_{(1131)} - \bar{\mathcal P}^6_{(1113)} + \bar{\mathcal P}^6_{(2121)} - \bar{\mathcal P}^6_{(1221)},$$
$$\d \bar{\mathcal P}^6_{(121)} =  \bar{\mathcal P}^6_{(31)} -\bar{\mathcal P}^6_{(13)} + \bar{\mathcal P}^6_{(2121)} - \bar{\mathcal P}^6_{(1212)}.$$
The operator $\d$ in these formulas should be understood in the spirit of algebraic topology as the boundary operator on cellular chains (and not as a topological boundary of the appropriate sets) \cite{K6}.

Adding the three formulas above, we get that the forbidden set, viewed as a $4$-chain, is an algebraic boundary of a $5$-chain:

\[
\mathcal P^6_{\geq 3} =_{\mathsf{def}}\, \bar{\mathcal P}^6_{(3111)} - \bar{\mathcal P}^6_{(1311)}  + \bar{\mathcal P}^6_{(1131)} - \bar{\mathcal P}^6_{(1113)} +  \bar{\mathcal P}^6_{(31)} - \bar{\mathcal P}^6_{(13)} 
\] 
\[
= \d \big(\bar{\mathcal P}^6_{(12111)} + \bar{\mathcal P}^6_{(11121)} + \bar{\mathcal P}^6_{(121)}\big).
\]

\smallskip

Now consider a smooth loop $\b: S^1 \to \mathcal P^6_{\leq 2}$. By a small perturbation we may assume that $\b(S^1)$ is transversal to the hypersurfaces  $\mathcal P^6_{(12111)},  \mathcal P^6_{(11121)},  \mathcal P^6_{(121)}$ that bound the cycle $\bar{\mathcal P}^6_{\geq 3}$. Therefore 

$$\mathsf{lk}\big(\b(S^1), \mathcal P^6_{\geq 3}\big) = \b(S^1) \circ \big(\mathcal P^6_{(12111)} \cup \mathcal P^6_{(11121)} \cup  \mathcal P^6_{(121)}\big).$$

Again, we form the set $$\D(\b) =_{\mathsf{def}}  \{(\theta, u)|\;  \b(\theta)(u) = 0\}\subset S^1 \times \R.$$

Then $$\b(S^1) \circ \big(\mathcal P^6_{(12111)} \cup \mathcal P^6_{(11121)} \cup  \mathcal P^6_{(121)}\big) = \#\{\d_2^{+, \oplus}\D(\b)\} - \#\{\d_2^{+, \ominus}\D(\b)\}.$$\footnote{Note that the points of $\b(S^1) \circ \mathcal P^6_{(11211)}$ belong to the locus  $\d_2^{-, \sim}\D(\b)$.}
Therefore we get $$\mathsf{lk}\big(\b(S^1), \mathcal P^6_{\geq 3}\big) = \#\{\d_2^{+, \oplus}\D(\b)\} - \#\{\d_2^{+, \ominus}\D(\b)\},$$
a version of the formula from Theorem \ref{th1.8}, being applied to the loop $\b = J_{z_\a}$. The loop is produced by a generic with respect to $\hat v$ immersion $\a: X \to A$, such that the cardinality of the fibers of $\theta: \a(\d X) \to S^1$ does not exceed $6$, and by an auxiliary  function $z_{\a(\d X)}$.\smallskip

These considerations are not restricted to polynomials/functions of degree $6$: they apply to any even degree $d$. The application requires a deeper dive into the combinatorics of real polynomial divisors and their modifications, but  the spirit is captured by the arguments  that deal with degree $6$ (see [K3]).

\smallskip

Any $\hat v$-generic immersion $\a: X \to A$ also produces a well-defined element $[K_\a]$ in the set of \emph{homotopy classes} $[\mathcal T(v), \mathcal F_{\leq 2}]$ of maps from the trajectory graph $\mathcal T(v)$ to the functional space $\mathcal F_{\leq 2}$. Its construction is similar to the one of $J_{z(\a)}$. Consider the $\hat v$-generated obvious map $Q_\a: \mathcal T(v) \to \mathcal T(\hat v) \approx S^1$ (each $v$-trajectory is contained in the unique $\hat v$-trajectory).  Put  $K_\a =_{\mathsf{def}} J_{z(\a)}  \circ Q_\a$.
\smallskip

\noindent {\bf Remark 9.3.} Note that, for some immersions $\a : X \to A$, the invariant $J^\a$ may be different from $0$, but $[K_\a]$ may be trivial. For example, this is the case when $X$ is a disk with a snake-like boundary $\a(\d X)$ with respect to $\hat v$. However, there exist immersions $\a$ with a nontrivial $[K_\a]$. For example, such is the immersion in Fig. 10, (1). At the same time, for $\a$ in Fig. 11, (3), $[K_\a]$ is trivial.  \hfill $\diamondsuit$
\smallskip

Since $\pi_1(\mathcal F_{\leq 2}) \approx \Z$, it follows that $H_1(\mathcal F_{\leq 2}; \Z) \approx \Z$. In turn, this implies that the $1$-dimensional cohomology $H^1(\mathcal F_{\leq 2}; \Z) \approx \Z$. 

Thus $K_\a$ induces a map $$K_\a^\ast: H^1(\mathcal F_{\leq 2}; \Z) \to H^1(\mathcal T(v); \Z) \approx H^1(X; \Z).$$
In particular, we get an element $K_\a^\ast(\kappa^\ast) \in H^1(X; \Z)$, where $\kappa^\ast$ is a generator of $H^1(\mathcal F_{\leq 2}; \Z) \approx \Z$. This cohomology class $K_\a^\ast(\kappa^\ast)$ is a characteristic class of the given $\hat v$-generic immersion $\a$. 

Theorem \ref{th1.7} implies that if two $\hat v$-generic immersions $\a, \a_1: X \to A$ are such that the pull-backs $\a^\ast(\hat v) = \a^\ast_1(\hat v) = v$, then $K_\a^\ast(\kappa^\ast) = K_{\a_1}^\ast(\kappa^\ast)$. So the cohomology class $K_\a^\ast(\kappa^\ast)$ is, in fact, a characteristic class of $v$. It is desirable to be able to reach this conclusion without relying on the cobordisms of curves' patterns in $A$ with no $\theta$-vertical inflections. 

Based on the partial evidence, provided by the two polynomial models $\mathcal P^4_{\leq 2}$ and $\mathcal P^6_{\leq 2}$, we may conjecture that the value of  $K_\a^\ast(\kappa^\ast)$  on any loop ($1$-cycle) $\delta: S^1 \to X$ equals to the linking number $\mathsf{lk}(K_\a(\delta), \mathcal F_{\geq 3})$. The validation of this conjecture requires to extend our analysis of the stratified geometry of $\mathcal P^6_{\leq  2}$ to $\mathcal P^d_{\leq  2}$ and to show that $\mathcal I_d: \mathcal P^d_{\leq 2} \to \mathcal F^d_{\leq 2}$ is a weak homotopy equivalence for all even $d$. Both steps are realizable with the techniques developed in \cite{K3} and in \cite{K5}. \smallskip

In dimensions higher than two, similar considerations apply to produce characteristic classes of traversally generic flows. They are based on computations of homology of spaces of real monic polynomials with \emph{restricted combinatorics} of their real divisors. It turns out  that the topology of high-dimensional convex envelops is as intricate as the homotopy groups of spheres \cite{K6}. 
\bigskip

Our investigation of vector flows in Flatland reached its conclusion. To find out how things flow in other  lands---``the romances of many dimensions"---(\cite{Ab}), the reader could consult with the references below.
\bigskip


\end{document}